\documentclass[12pt,a4paper]{article}

\usepackage{latexsym}
\usepackage{amsmath}
\usepackage{amssymb}
\usepackage{amsthm}
\usepackage{amscd}
\usepackage{mathrsfs}
\usepackage[all]{xy}
\usepackage{graphicx}
\usepackage{bm}
\usepackage{comment}

\def\cprime{$'$}


\setlength{\oddsidemargin}{-15pt}
\setlength{\textwidth}{\paperwidth}
\addtolength{\textwidth}{-2in}
\addtolength{\textwidth}{-2\oddsidemargin}

\setlength{\topmargin}{-50pt}
\setlength{\textheight}{710pt}

\newtheorem{thm}{Theorem}[section]
\newtheorem{prop}[thm]{Proposition}
\newtheorem{defn}[thm]{Definition}
\newtheorem{lem}[thm]{Lemma}
\newtheorem{cor}[thm]{Corollary}

\newtheorem{rem}[thm]{Remark}

\newtheorem*{thmn}{Theorem}

\theoremstyle{remark}

\makeatletter

\@addtoreset{equation}{section}
\makeatother

\newcommand{\cf}{\textit{cf.\ }}

\DeclareMathOperator{\Spec}{Spec}

\DeclareMathOperator{\Gal}{Gal}
\DeclareMathOperator{\id}{id}

\DeclareMathOperator{\End}{End}
\DeclareMathOperator{\Hom}{Hom}

\DeclareMathOperator{\Spf}{Spf}

\DeclareMathOperator{\Ind}{Ind} 

\DeclareMathOperator{\Nr}{Nr}
\DeclareMathOperator{\Trd}{Trd}
\DeclareMathOperator{\Nrd}{Nrd}

\DeclareMathOperator{\Spa}{Spa}
\DeclareMathOperator{\Lie}{Lie}
\DeclareMathOperator{\Art}{Art}
\DeclareMathOperator{\sgn}{sgn}

\newcommand{\bom}[1]{\mbox{\boldmath $#1$}}

\renewcommand{\eqref}[1]{(\ref{#1})}

\renewcommand{\bigskip}{\vspace{0.2cm}}

\newcommand{\bA}{\mathbb{A}}

\newcommand{\bF}{\mathbb{F}}

\newcommand{\bQ}{\mathbb{Q}}

\newcommand{\bZ}{\mathbb{Z}}


\newcommand{\bfC}{\mathbf{C}}

\newcommand{\bfM}{\mathbf{M}}


\newcommand{\bfg}{\mathbf{g}}



\newcommand{\cB}{\mathcal{B}}
\newcommand{\cC}{\mathcal{C}}
\newcommand{\cD}{\mathcal{D}}

\newcommand{\cG}{\mathcal{G}}

\newcommand{\cM}{\mathcal{M}}

\newcommand{\cO}{\mathcal{O}}

\newcommand{\cS}{\mathcal{S}}

\newcommand{\cX}{\mathcal{X}}


\newcommand{\fp}{\mathfrak{p}}

\newcommand{\fI}{\mathfrak{I}}

\newcommand{\fX}{\mathfrak{X}}



\newcommand{\iGL}{\mathit{GL}}

\newcommand{\rmur}{\mathrm{ur}}
\newcommand{\rmac}{\mathrm{ac}}

\newcommand{\ol}{\overline}
\newcommand{\wh}{\widehat}

\begin{document}
\title{Affinoids in the Lubin-Tate perfectoid space and\\ simple supercuspidal representations I: tame case}
\author{Naoki Imai and Takahiro Tsushima}
\date{}
\maketitle

\footnotetext{2010 \textit{Mathematics Subject Classification}. 
 Primary: 11G25; Secondary: 11F80.} 

\begin{abstract} 
We construct a family of affinoids in the Lubin-Tate perfectoid space 
and their formal models 
such that 
the middle cohomology of the reductions of the formal models 
realizes the 
local Langlands correspondence and the 
local Jacquet-Langlands correspondence 
for simple supercuspidal representations 
in the case where 
the dimension of Galois representations 
is prime to the residue characteristic. 
The reductions of the formal models are 
isomorphic to the perfections of Artin-Schreier varieties 
associated to quadratic forms. 
\end{abstract}

\section*{Introduction}
Let $K$ be a non-archimedean local field with 
residue field $k$. 
Let $p$ be the characteristic of $k$. 
We write $\mathcal{O}_K$ for the ring of integers of $K$. 
We fix an algebraic closure $k^{\mathrm{ac}}$ of $k$. 
Let $n$ be a positive integer. 
The Lubin-Tate spaces are deformation spaces of 
the one-dimensional formal $\mathcal{O}_K$-module 
over $k^{\mathrm{ac}}$ of height $n$ with level structures. 
We take a prime number $\ell$ that is different from $p$. 
The local Langlands correspondence (LLC) and the 
local Jacquet-Langlands correspondence (LJLC) 
for supercuspidal representations of $\mathit{GL}_n (K)$ 
are realized in the $\ell$-adic cohomology of the Lubin-Tate spaces. 
This was proved by Boyer 
in \cite{BoyMDr} in the equal characteristic case, and 
by Harris-Taylor in \cite{HTsimSh} 
in the mixed characteristic case. 
However, the proofs rely on global automorphic arguments, 
and the geometry of Lubin-Tate spaces is 
still mysterious. 

As a geometric study of Lubin-Tate spaces, 
Yoshida constructed a semi-stable model of 
the Lubin-Tate space with a full level $\mathfrak{p}$-structure in 
\cite{YoLTv}, 
where $\mathfrak{p}$ is the maximal ideal of $\mathcal{O}_K$. 
Further, he showed that 
Deligne-Lusztig varieties for $\mathit{GL}_n (k)$ 
appear as a Zariski open subset of the reduction of the semi-stable model, 
and that the cohomology of the reduction 
realizes the LLC for depth zero supercuspidal representations. 
We say that a supercuspidal representation of $\mathit{GL}_n (K)$ is 
of unramified type 
if its Weil parameter is induced from 
a character of the Weil group of the unramified extension of $K$ 
of degree $n$, 
and of ramified type if it is not of unramified type. 
After the work of Yoshida, 
Weinstein constructed in \cite{WeGood} 
a family of affinoids in a Lubin-Tate space with 
a finite level in the equal characteristic case 
such that 
the cohomology of the reductions of the affinoids realizes 
the LLC for depth one supercuspidal representations 
of unramified type. 
This work was generalized to higher 
depth supercuspidal representations of unramified type 
in any characteristic by 
Boyarchenko-Weinstein in \cite{BWMax} 
in the Lubin-Tate perfectoid space setting, 
where the Lubin-Tate perfectoid space is a 
Lubin-Tate space with an infinite level in some sense. 
In these cases, 
analogues of Deligne-Lusztig varieties appear as the reductions 
of formal models of the affinoids. 
On the other hand, 
even a conjecture was not known on 
what kind of varieties realize 
the LLC for supercuspidal representations of ramified type 
if $n >2$. 

In a series of papers, 
we construct a family of affinoids 
in the Lubin-Tate perfectoid space 
and their formal models such that 
the cohomology of the reductions of the models 
realizes the LLC and the LJLC for 
the representations whose exponential Swan conductors equal one, 
which we call simple supercuspidal representations in this paper 
(\cf \cite{ALssrGL}). 
This generalizes results in \cite{ITstab3} and \cite{ITreal3} to 
higher dimensional cases in the perfectoid space setting. 
Note that the supercuspidal representations of $\iGL_n (K)$ 
of exponential Swan conductors one 
are called epipelagic in \cite{BHLepi} after 
a terminology in \cite{RYEinv}. 
The simple supercuspidal representations are the first parts of 
representations of ramified type in some sense. 
We say that 
a representation is essentially simple supercuspidal 
if it is a character twist of a simple supercuspidal representation. 

In this paper, we treat a tame case, which means the 
case where $n$ is prime to $p$. 
Let 
$q$ be the number of the elements of $k$ and 
$D$ be the central division algebra over $K$ of invariant $1/n$. 
Let $W_K$ denote the Weil group of $K$. 
The main theorem is the following: 
\begin{thmn}
For a totally tamely ramified extension $L$ of 
$K$ of degree $n$, 
there is an affinoid $\cX^L$ in the Lubin-Tate perfectoid space 
and its formal model $\fX^L$ such that 
\begin{itemize}
\item 
the special fiber $\ol{\fX^L}$ of $\fX^L$ is isomorphic to 
the perfection of the affine smooth variety defined by 
$z^q -z =\sum_{1 \leq i \leq j \leq n-1} y_i y_j$ 
in $\bA_{k^{\rmac}}^n$,  
\item 
the stabilizer $H_L \subset \iGL_n (K) \times D^{\times} \times W_K$ 
of $\cX^L$ naturally acts on $\ol{\fX^L}$, and 
\item 
$\mathrm{c\mathchar`-Ind}_{H_L}^{\iGL_n (K) \times D^{\times} \times W_K} 
 H_{\mathrm{c}}^{n-1}(\overline{\fX^L},\ol{\bQ}_{\ell} )$ 
 realizes the LLC and the LJLC for 
 essentially simple supercuspidal representations. 
\end{itemize}
\end{thmn}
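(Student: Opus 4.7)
First, I would work with the model of the Lubin-Tate perfectoid space as in \cite{WeGood, BWMax}, described in terms of the universal cover of the height-$n$ formal $\cO_K$-module, with coordinates $(X_1,\ldots,X_n)$ giving rise to a perfectoid algebra via $p$-power compatible systems. Given a totally tamely ramified degree-$n$ extension $L/K$ with chosen uniformizer $\varphi_L$, I would fix an embedding $\iota\colon \cO_L \hookrightarrow M_n(\cO_K)$ making $\cO_L$ act on the formal module; this determines a CM point $x_L$ in the Lubin-Tate tower. The affinoid $\cX^L$ is then defined as a neighborhood of $x_L$ cut out by explicit valuation conditions on coordinates $Y_i$ obtained by translating $x_L$ to the origin and rescaling by an $n$-th root of $\varphi_L$; the formal model $\fX^L$ is the associated $\cO$-flat formal scheme.

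Second, I would carry out the explicit computation of the reduction $\ol{\fX^L}$. The key step is a coordinate change exploiting that $(n,p)=1$: after a tame extension, $n$-th roots of uniformizers exist, and this permits rewriting the defining equations from the formal $\cO_K$-module law in a closed, manipulable form. Expanding the relevant power series modulo the elements of positive valuation, I would isolate the integral part of a ``logarithm'' expression through an Artin-Schreier equation $z^q - z = (\text{quadratic part})$ coming from the action of $\Fr_q$ on the perfectoid coordinates. A final linear change of variables among the $Y_i$ yields the normal form $z^q - z = \sum_{1 \le i \le j \le n-1} y_i y_j$, realizing $\ol{\fX^L}$ as the perfection of this variety.

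Third, I would identify the stabilizer $H_L \subset \iGL_n(K) \times D^\times \times W_K$. The natural inclusions $L^\times \hookrightarrow \iGL_n(K)$ (via $\iota$), $L^\times \hookrightarrow D^\times$ (via the unique degree-$n$ unramified-twisted embedding of $L$ into $D$) and $W_L \hookrightarrow W_K$ account for the bulk of $H_L$; I would check directly that any further stabilizing element acts trivially on $\ol{\fX^L}$, and compute the induced $H_L$-action on the reduction as a combination of linear transformations of the $y_i$ and additive translations of $z$. At the same time, this step forces the central characters on the three sides to match.

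Finally, for the representation-theoretic identification, I would decompose $H^{n-1}_{\mathrm{c}}(\ol{\fX^L},\ol{\bQ}_\ell)$ under the $\bF_q$-translation action by additive characters $\psi$, and use the standard computation of the cohomology of the Artin-Schreier cover of a nondegenerate quadratic form to obtain, on each $\psi$-isotypic piece, a one-dimensional representation of the stabilizer twisted by a quadratic Gauss sum of the form. After compact induction along $H_L \hookrightarrow \iGL_n(K) \times D^\times \times W_K$, I would match the resulting triple with the explicit construction of essentially simple epipelagic representations in \cite{BHLepi} and the associated Weil parameters, so as to verify both the LLC and the LJLC. The main obstacle will be this last comparison: tracking how the various $L^\times$-actions and $W_L$-actions are identified on the cohomology side versus on the Galois/division-algebra side, so that the induced triple agrees with the LLC/LJLC predictions up to the correct rectifier and sign twists. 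The appearance of a genuine quadratic form, rather than a pure linear Artin-Schreier equation, introduces a discriminant-type factor that must be reconciled with the normalization conventions of the Langlands correspondence; this bookkeeping is where I expect the bulk of the technical work to lie.
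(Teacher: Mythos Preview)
Your outline follows the paper's strategy closely: CM point, affinoid neighborhood, explicit reduction computation, determination of the stabilizer, cohomology via Artin--Schreier theory, and comparison with Bushnell--Henniart. Two points deserve correction.

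First, your description of $H_L$ is off in a way that matters. The subgroups $L^\times \hookrightarrow \iGL_n(K)$, $L^\times \hookrightarrow D^\times$, $W_L \hookrightarrow W_K$ do \emph{not} account for the bulk of $H_L$, and it is not true that ``any further stabilizing element acts trivially on $\ol{\fX^L}$.'' In the paper the stabilizer has the shape $H_L = (U_\fI^{1,\det=1} \times U_D^{1,\Nrd=1} \times 1)\cdot \cS_L$, and the pro-$p$ unipotent pieces $U_\fI^{1,\det=1}$ and $U_D^{1,\Nrd=1}$ act \emph{nontrivially}, precisely by the additive translations $z \mapsto z + r_L(g)$ and $z \mapsto z + r_L(d)$. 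This is exactly the $\bF_q$-action you later want to use to decompose $H^{n-1}_{\mathrm{c}}$ by additive characters; if those elements acted trivially you would have no such decomposition. The diagonal $L^\times$ in $\cS_L$ supplies the linear action on the $y_i$ (via the element $\bfg_L$), and $W_L$ acts through Frobenius twists and scalings. Getting this structure right is essential for the later matching step.

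Second, and more seriously, your cohomology step has a genuine gap when $p=2$. You invoke ``the standard computation of the cohomology of the Artin--Schreier cover of a nondegenerate quadratic form,'' but in characteristic $2$ the form $\sum_{i\le j} y_i y_j$ cannot be diagonalized and the usual Gauss-sum argument does not apply. The paper handles this by an entirely different method: a purely inseparable base change followed by an explicit fibration of the resulting variety over affine space, whose fibers are affine spaces away from a finite set and split into two affine pieces over that set. This computes the cohomology, but the fibration is not equivariant for the full group action, so a further $\ell$-independence trace argument is needed to pin down the action of the order-$n$ element $\bfg_L$. You should flag the $p=2$ case as requiring separate treatment; the quadratic-form heuristic alone will not close it. Your closing remark about the ``discriminant-type factor'' is correct in spirit: for odd $p$ the comparison boils down to an identity $\lambda_{L/K}(\psi_K) = \mathfrak g(\nu_{n-1},\psi)\,q^{-(n-1)/2}$ between the Langlands constant and the quadratic Gauss sum, proved in the paper by induction on $v_2(n)$ using quadratic reciprocity.
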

See Theorem \ref{redmod} and Theorem \ref{Pireal} for precise statements. 
In the tame case, 
an essentially simple supercuspidal representation is essentially tame, 
and its Weil parameter 
is induced from a character of the Weil group of 
a totally tamely ramified extension of $K$ of degree $n$, 
which appears as $L$ in the above theorem. 
Explicit descriptions of the LLC and the LJLC for 
essentially tame representations are given by 
Bushnell-Henniart in 
\cite{BHestI}, \cite{BHestII} and \cite{BHestIII}, and 
in \cite{BHestJL} respectively. 
Although the characteristic of a non-archimedean local field 
is assumed to be zero in their results, 
the assumption is removed by the work \cite{HLCinon} of Henniart-Lemaire. 
See also ``Note on characteristic'' in \cite[p.~8]{BHeffL}. 
We can use their results to show 
the realization of the LLC and the LJLC 
in the cohomology of the reductions. 

In Section \ref{LTps}, 
we recall on the Lubin-Tate perfectoid spaces. 
In Subsection \ref{LTpsfm}, 
we recall 
a definition of the Lubin-Tate perfectoid space and 
a structure theorem on 
a formal model of the Lubin-Tate perfectoid space from \cite{WeSemi}. 
We also give an approximation lemma for 
a defining equation of the formal model. 
In Subsection \ref{gpfm}, we recall a group action on 
the formal model. 
In Subsection \ref{CMpts}, we recall on 
CM points. 

In Section \ref{GoodRedAff}, 
we construct and study an affinoid associated to 
a totally tamely ramified extension $L$ of $K$ of degree $n$. 
In Subsection \ref{ConstAff}, 
we construct a CM point that has multiplication by $L$. 
Using the CM point, we give a construction of an affinoid. 
In Subsection \ref{RedAff}, 
we construct a formal model of the affinoid and 
study its reduction. 
As a result, we find that 
the reduction is isomorphic to 
an Artin-Schreier variety 
associated to a quadratic form. 
In Section \ref{GpX}, 
we study the group action on the reductions, 
and determine the stabilizer of the affinoid. 

In Section \ref{CohASv}, 
we study the cohomology of 
an Artin-Schreier variety $X$ associated to a quadratic form 
in a bit more general situation 
with consideration to applications to the wild case. 
If $p$ is odd, 
we can calculate the cohomology of $X$ 
by diagonalizing the quadratic form. 
If $p=2$, we need other methods. 
In fact, 
we construct a variety $X'$ that is 
purely inseparable to $X$, and 
a nice fibration 
of $X'$ over an affine space. 
Using this fibration, 
we can calculate the cohomology of $X$. 
Unfortunately, this fibration is not 
preserved by a group action. 
To calculate the group action on the cohomology of $X$, 
we use an argument changing $\ell$ in the coefficient, 
which relies on an $\ell$-independence result on 
the trace of an action on an $\ell$-adic cohomology. 

In Section \ref{RealLLCLJLC}, 
we show the realization of 
the LLC and the LJLC in the cohomology of the reductions. 
In Subsection \ref{ExpCorr}, 
we recall 
explicit descriptions of the LLC and 
the LJLC given by Bushnell-Henniart. 
In Subsection \ref{subsecReal}, 
we prove Theorem \ref{Pireal} 
assuming Proposition \ref{ky}, 
which is a formula comparing 
the Langlands constant of 
a totally ramified extension of degree $n$ 
with a quadratic Gauss sum. 
In Subsection \ref{PrProp}, 
we prove Proposition \ref{ky} by induction on the $2$-adic valuation of $n$, 
using the quadratic reciprocity law. 

In a subsequent paper \cite{ITsimpwild}, 
we will study the geometric realization of 
the LLC and the LJLC for simple supercuspidal representations in 
the wild case. 

After this work was completed, 
Tokimoto generalizes the construction of affinoids 
to cases for some essentially tame representations of higher depth 
in the positive characteristic case 
using results in this paper (\cf \cite{TokLTs}). 
In Remark \ref{rem:compari}, 
we compare the construction in this paper with 
that in \cite{TokLTs} to see 
that our construction using CM points 
naturally fits into a systematic 
description of general phenomena in his paper.

\subsection*{Acknowledgements}
The authors are grateful to Yoichi Mieda and Kazuki Tokimoto 
for a lot of helpful comments on a previous version of this paper. 
The authors would like to thank referees for suggestions for improvements. 
This work was supported by JSPS KAKENHI Grant Numbers 
26707003, 15K17506. 

\subsection*{Notation}
For a non-archimedean valuation field $F$, 
its valuation ring is denoted by $\mathcal{O}_F$. 
For $a \in \mathbb{Q}$ and 
elements $f$, $g$ with valuation $v$ that takes 
values in $\mathbb{Q}$, 
we write 
$f \equiv g \mod a$ 
if $v(f-g) \geq a$, and  
$f \equiv g  \mod\!_{>}\, a$ 
if $v(f-g) >a$. 
For a topological field extension $E$ over $F$, 
let $\Gal (E/F)$ denote the group of 
the continuous automorphisms of $E$ over $F$. 

\section{Lubin-Tate perfectoid space}\label{LTps}
\subsection{Lubin-Tate perfectoid space and its formal model}\label{LTpsfm}
Let $K$ be a non-archimedean local field with 
a residue field $k$ of characteristic $p$. 
Let $q$ the number of the elements of $k$. 
We write $\mathfrak{p}$ for the maximal ideal of $\mathcal{O}_K$. 
We fix an algebraic closure 
$K^{\mathrm{ac}}$ of $K$. 
Let 
$k^{\mathrm{ac}}$ be the residue field of 
$K^{\mathrm{ac}}$. 

Let $n$ be a positive integer. 
We take a one-dimensional formal 
$\mathcal{O}_K$-module $\cG_0$ over $k^{\mathrm{ac}}$ 
of height $n$, 
which is unique up to isomorphism. 
Let $K^{\mathrm{ur}}$
be the maximal unramified extension of $K$ in $K^{\mathrm{ac}}$.
We write $\widehat{K}^{\mathrm{ur}}$ for the completion of $K^{\mathrm{ur}}$. 
Let $\mathcal{C}$ be the category of 
complete Noetherian local 
$\mathcal{O}_{\widehat{K}^{\mathrm{ur}}}$-algebras 
with residue field $k^{\mathrm{ac}}$.

\begin{defn}
A deformation of 
$\cG_0$ to $R \in \mathcal{C}$ 
means a pair $(\cG,\iota)$, where 
$\cG$ is a 
formal $\mathcal{O}_K$-module 
over $R$ and 
$\iota \colon \cG_0 \to \cG\otimes_R k^{\mathrm{ac}}$ is 
an isomorphism. 
\end{defn}

Let $\cG$ be a formal $\cO_K$-module over $R \in \cC$. 
For $a \in \mathcal{O}_K$, 
let $[a]_{\cG} \colon \cG \to \cG$ be the 
multiplication by $a$, and 
$\cG[a]$ be the kernel of $[a]_{\cG}$. 
For an integer $m \geq 0$, 
we define $\cG[\mathfrak{p}^m]$ to be $\cG[a]$ for some 
$a \in \mathfrak{p}^{m} \setminus \mathfrak{p}^{m+1}$. 

\begin{defn}
A Drinfeld level $\mathfrak{p}^m$-structure on $\cG$ means 
an $\mathcal{O}_K$-module homomorphism 
$\phi \colon (\fp^{-m}/\cO_K)^{n} \to \cG[\fp^m](R)$ 
that gives a full set of sections of $\cG[\fp^m](R)$ 
in the sense of \cite[(1.8.2)]{KMmod}. 
\end{defn}

We consider 
the functor $\mathcal{C} \to \mathbf{Sets}$ 
which associates to an object $R \in \mathcal{C}$ 
the set of isomorphism classes of triples 
$(\cG, \phi,\iota)$, where $(\cG, \iota)$
is a deformation of $\cG_0$ to $R$ and 
$\phi$ is a Drinfeld level $\mathfrak{p}^m$-structure on $\cG$. 
This functor is represented 
by a regular local ring $A_m$ by \cite[Proposition 4.3]{DrEmod}. 
Then, $\{A_m\}_{m \geq 0}$ makes an inductive 
system. 
Let $I$ the ideal of 
$\varinjlim A_m$ generated by 
the maximal ideal of $A_0$. 
Let $A$ be the $I$-adic 
completion of $\varinjlim A_m$. 
We regard $A$
as a topological ring 
by considering the 
$I$-adic topology on $A$. 
We set $\bfM_{\cG_0, \infty}=\Spf A$. 

Let $K^{\mathrm{ab}}$ be the maximal abelian 
extension of $K$ in $K^{\mathrm{ac}}$. 
We write 
$\widehat{K}^{\mathrm{ab}}$ for the completion of 
$K^{\mathrm{ab}}$. 
Let $\wedge \cG_0$ 
denote the one-dimensional formal 
$\mathcal{O}_K$-module over $k^{\mathrm{ac}}$ of height one. 
Then we have 
$\bfM_{ \wedge \cG_0, \infty} \simeq 
 \Spf \mathcal{O}_{\widehat{K}^{\mathrm{ab}}}$ 
by the Lubin-Tate theory. 
We have a determinant morphism 
\begin{equation}\label{mor}
 \bfM_{\cG_0, \infty} \to 
 \bfM_{\wedge \cG_0, \infty}, 
\end{equation} 
which is given in \cite[2.5 and 2.7]{WeSemi} 
based on \cite{HedPhD}. 
Then, by \eqref{mor}, we have the ring 
homomorphism 
$\mathcal{O}_{\widehat{K}^{\mathrm{ab}}} \to A$. 

Let $\Spa (A,A)$ 
denote the set of equivalent 
classes of 
continuous valuations $x$ 
on $A$ such that $|f(x)| \leq 1$ for any $f \in A$ 
(cf.\ \cite[3]{HuCv}). 
We fix  a uniformizer 
$\varpi$ of $\mathcal{O}_K$. 
We put 
\[
 \mathcal{M}_{\infty}=\bigl\{
 x \in \Spa  (A,A) \bigm| |\varpi(x)| \neq 0 
 \bigr\}. 
\]
Then $\cM_{\infty}$ 
naturally has a structure of 
an adic space over $\wh{K}^{\rmur}$. 
For a deformation $\cG$ of $\cG_0$ over $\cO_{\bfC}$, 
we put 
\[
 V_{\fp} (\cG) =\bigl( \varprojlim \cG[\fp^m](\cO_{\bfC}) \bigr) 
 \otimes_{\cO_K} K, 
\]
where the transition maps are multiplications by $\varpi$. 
By the construction, each point of 
$\cM_{\infty}(\bfC)$ 
corresponds to a triple $(\cG,\phi,\iota)$ 
that consists of 
a formal $\cO_K$-module over $\cO_{\bfC}$, 
an isomorphism $\phi \colon K^n \to V_{\fp} (\cG)$ and 
an isomorphism 
$\iota \colon \cG_0 \to \cG \otimes_{\cO_{\bfC}} k^{\rmac}$ 
(cf.~\cite[Definition 2.10.1]{BWMax}). 

By the ring homomorphism 
$\mathcal{O}_{\widehat{K}^{\mathrm{ab}}} \to A$, 
we can regard $\cM_{\infty}$ as an 
adic space over 
$\eta=\Spa (\widehat{K}^{\mathrm{ab}}, 
 \mathcal{O}_{\widehat{K}^{\mathrm{ab}}})$, 
for which we write 
$\mathcal{M}_{\infty, \eta}$. 
Let $\mathbf{C}$ be the completion of $K^{\mathrm{ac}}$, 
and $\bar{\eta}=\Spa (\mathbf{C}, \mathcal{O}_{\mathbf{C}})$. 
We have a natural embedding 
$\widehat{K}^{\mathrm{ab}} \hookrightarrow \mathbf{C}$. 
We put 
\[
 \mathcal{M}_{\infty, \overline{\eta}} = 
 \mathcal{M}_{\infty, \eta} \times_{\eta} \ol{\eta}. 
\] 
Then, 
$\mathcal{M}_{\infty, \overline{\eta}}$ 
is a perfectoid space over $\mathbf{C}$ 
in the sense of 
\cite[Definition 6.15]{SchPerf} 
by \cite[Lemma 2.10.1]{WeSemi}. 
We call 
$\mathcal{M}_{\infty, \overline{\eta}}$ 
the Lubin-Tate perfectoid space. 

In the following, 
we recall an explicit description of 
$A^{\circ}=A \widehat{\otimes}_{\mathcal{O}_{\widehat{K}^{\mathrm{ab}}}} 
 \mathcal{O}_{\mathbf{C}}$ 
given in \cite[(2.9.2)]{WeSemi}. 
Let $\wh{\cG}_0$ be the formal $\cO_K$-module 
over $\cO_K$ whose logarithm is 
$\sum_{i=0}^{\infty} \frac{X^{q^{in}}}{\varpi^i}$ (cf.~\cite[2.3]{BWMax}). 
Let $\cG_0$ be the formal $\cO_K$-module over 
$k^{\rmac}$ obtained as reduction of $\wh{\cG}_0$. 
We put $\mathcal{O}_D =\End \cG_0$ and 
$D=\mathcal{O}_D \otimes_{\cO_K} K$, which is 
the central division algebra over $K$ of invariant $1/n$. 
Let $[\ \cdot \ ]$ denote the action of $\mathcal{O}_D$ on $\cG_0$. 
Let $\varphi$ be the element of $D$ such that $[\varphi](X)=X^q$. 
Let $K_n$ 
be the unramified extension of $K$ of degree $n$. 
For an element $a \in \mathcal{O}_{\mathbf{C}}$, 
its image in the residue field is denoted by $\bar{a}$. 
We consider the $K$-algebra embedding of $K_n$ 
into $D$ determined by 
\[
 [\zeta](X)=\bar{\zeta} X \ \ 
 \text{for} \ \ \zeta \in \mu_{q^n -1}(K_n). 
\]
Then we have $\varphi^n=\varpi$ and 
$\varphi \zeta =\zeta^q \varphi$ for $\zeta \in \mu_{q^n -1} (K_n)$. 
Let $\wh{\wedge \cG_0}$ 
be the one-dimensional formal $\mathcal{O}_K$-module 
over $\cO_K$ whose logarithm is 
$\sum_{i=0}^{\infty} (-1)^{(n-1)i} \frac{X^{q^i}}{\varpi^i}$. 
We choose a compatible system $\{t_m\}_{m \geq 1}$
such that 
\begin{equation}\label{tmcho}
 t_m \in {K}^{\mathrm{ac}} \quad (m \geq 1), \quad 
 t_1 \neq 0, \quad 
 [\varpi]_{\wh{\wedge \cG_0}}(t_1)=0, \quad 
 [\varpi]_{\wh{\wedge \cG_0}}(t_m)=t_{m-1} \quad (m \geq 2).
\end{equation}
We put
\[
 t=\lim_{m \to \infty}(-1)^{q(n-1)(m-1)} t_m^{q^{m-1}} 
 \in \mathcal{O}_{\mathbf{C}}.
\]
Let $v$ be the normalized valuation of $K$ 
such that $v(\varpi)=1$. 
The valuation $v$ naturally extends to a valuation on 
$\mathbf{C}$, for which we again write $v$. 
Note that $v(t)=1/(q-1)$. 
For an integer $i \geq 0$, we put 
$t^{q^{-i}} =\lim_{m \to \infty} (-1)^{q(n-1)(m-1)} t_m^{q^{m-i-1}}$. 


Let $W_K$ be the 
Weil group of $K$. 
Let 
$\mathrm{Art}_K 
\colon K^{\times} \xrightarrow{\sim} W_K^{\mathrm{ab}}$
be the Artin 
reciprocity map normalized 
such that a uniformizer is sent to a
lift of the geometric Frobenius element. 
We use similar normalizations also for the Artin reciprocity maps 
for other non-archimedean local fields. 
Let $\sigma \in W_{K}$. 
Let $n_{\sigma}$ 
be the image of $\sigma$ under the composite 
\[
 W_K \twoheadrightarrow W_K^{\mathrm{ab}} 
 \xrightarrow{\mathrm{Art}^{-1}_K}
 K^{\times} \xrightarrow{v} \mathbb{Z}. 
\]
Let $a_K \colon W_K \to \mathcal{O}_K^{\times}$ 
be the homomorphism given by the action of $W_K$ 
on $\{t_m\}_{m \geq 1}$. 
It induces an isomorphism 
$a_K \colon 
\mathrm{Gal}(\widehat{K}^{\mathrm{ab}}/\widehat{K}^{\mathrm{ur}}) \simeq 
\mathcal{O}_K^{\times}$. 

For $m \geq 0$, we put 
\begin{equation}\label{deltam}
 \delta_m (X_1,\ldots,X_n)
 =\widehat{\wedge \cG_0}\sum_{(m_1,\ldots ,m_n)}
 \sgn (m_1,\ldots ,m_n) 
 X_1^{q^{m_1-m}} \cdots X_n^{q^{m_n-m}} 
\end{equation}
in 
$\cO_K [[X_1^{1/q^{\infty}},\ldots, X_n^{1/q^{\infty}}]]$, 
where 
\begin{itemize}
\item 
the symbol $\widehat{\wedge \cG_0}\sum$ 
denotes the sum under the additive operation of 
$\widehat{\wedge \cG_0}$, 
\item 
we take the sum over $n$-tuples 
$(m_1,\ldots ,m_n)$ of integers, which can be negative, 
such that 
$m_1 + \cdots + m_n =n(n-1)/2$ and 
$m_i \not\equiv m_j \mod n$ for $i \neq j$, 
\item 
$\sgn (m_1,\ldots ,m_n)$ 
is the sign of the permutation on $\bZ/n\bZ$ 
defined by 
$i \mapsto m_{i+1}$. 
\end{itemize}
We put $\delta=\lim_{m \to \infty} \delta_m^{q^m} \in 
\mathcal{O}_{\mathbf{C}}[[X_1^{1/q^{\infty}},\ldots, X_n^{1/q^{\infty}}]]$. 
For $l \geq 1$, 
we put $\delta^{q^{-l}}=\lim_{m \to \infty} \delta_m^{q^{m-l}}$. 

\begin{thm}\label{thm:Astr}
Let 
$\sigma \in \mathrm{Gal}
 (\widehat{K}^{\mathrm{ab}}/\widehat{K}^{\mathrm{ur}})$. 
We put 
$A^{\sigma}=A 
 \widehat{\otimes}_{\mathcal{O}_{\widehat{K}^{\mathrm{ab}}}, \sigma} 
 \mathcal{O}_{\mathbf{C}}$.
Then, we have an isomorphism 
\begin{equation}\label{Astr}
 A^{\sigma} 
 \simeq 
 \mathcal{O}_{\mathbf{C}}[[X_1^{1/q^{\infty}},\ldots,X_n^{1/q^{\infty}}]]/
 (\delta(X_1,\ldots,X_n)^{q^{-m}} - \sigma(t^{q^{-m}}))_{m \geq 0}^-, 
\end{equation}
where $(\delta(X_1,\ldots,X_n)^{q^{-m}} - \sigma(t^{q^{-m}}))_{m \geq 0}^-$ 
is the closure of the ideal generated by 
$\delta(X_1,\ldots,X_n)^{q^{-m}} - \sigma(t^{q^{-m}})$ for $m \geq 0$. 
\end{thm}
\begin{proof}
This follows from \cite[(2.8)]{WeSemi}. 
See the proof of \cite[Theorem 2.10.3]{BWMax} 
for the description of $\delta$ 
(cf.~\cite[Theorem 6.4.1]{ScWeMpd}). 
\end{proof}
\begin{rem}\label{rem:univbasis}
Let $\mathcal{G}^{\mathrm{univ}}$ be the universal formal 
$\mathcal{O}_K$-module over $A_0$. 
Let $U^{(m)}_1, \ldots, U^{(m)}_n \in A_m$ 
denote the universal Drinfeld 
basis for $\mathcal{G}^{\mathrm{univ}}[\mathfrak{p}^m]$ over 
$A_m$.
The elements $U^{(m)}_1, \ldots, U_n^{(m)}$ 
generate the maximal ideal of 
the regular local ring $A_m$. 
The isomorphism 
\eqref{Astr} sends $X_i$ to the limit 
$\lim_{m \to \infty} (U^{(m)}_i)^{q^{(m-1)n}} \in A$. 
\end{rem}

For 
$\sigma \in \Gal (\widehat{K}^{\mathrm{ab}}/\widehat{K}^{\mathrm{ur}})$, 
let $\mathcal{M}_{\infty,\bar{\eta},\sigma}$ 
be the base change of $\cM_{\infty,\eta}$ 
by $\bar{\eta} \to \eta \xrightarrow{\sigma} \eta$. 
For 
$\sigma \in \Gal (\widehat{K}^{\mathrm{ab}}/\widehat{K}^{\mathrm{ur}})$ 
and 
$\alpha =a_K (\sigma) \in \mathcal{O}_K^{\times}$, 
we write $A^{\alpha}$ for $A^{\sigma}$ and 
$\mathcal{M}^{(0)}_{\infty,\bar{\eta},\alpha}$ for 
$\mathcal{M}^{(0)}_{\infty,\bar{\eta},\sigma}$. 
We put 
\begin{equation}\label{Mdec}
 \bfM_{\infty,\mathcal{O}_{\mathbf{C}}}^{(0)} 
 =\coprod_{\alpha \in \mathcal{O}_K^{\times}} 
 \Spf A^{\alpha}, \quad 
 \mathcal{M}^{(0)}_{\infty,\bar{\eta}} 
 =\coprod_{\alpha \in \mathcal{O}_K^{\times}} 
 \mathcal{M}_{\infty,\bar{\eta},\alpha}. 
\end{equation}
Then $\mathcal{M}^{(0)}_{\infty,\bar{\eta}}$ 
is the generic fiber of 
$\bfM_{\infty,\mathcal{O}_{\mathbf{C}}}^{(0)}$, 
and 
$\mathcal{M}^{(0)}_{\infty,\bar{\eta}} (\bfC) = \cM_{\infty}(\bfC)$ as sets. 

Let $+_{\wh{\cG_0}}$ and 
$+_{\wh{\wedge \cG_0}}$ be the additive operations for 
$\wh{\cG_0}$ and $\wh{\wedge \cG_0}$ respectively. 

\begin{lem}\label{appsum}
{\rm 1.} 
We have $X_1 +_{\wh{\cG_0}} X_2 \equiv X_1 + X_2$ 
modulo terms of total degree $q^n$. \\ 
{\rm 2.} 
We have $X_1 +_{\wh{\wedge \cG_0}} X_2 \equiv X_1 + X_2$ 
modulo terms of total degree $q$. 
\end{lem}
\begin{proof}
This follows from the descriptions of the logarithms of 
$\wh{\cG_0}$ and $\wh{\wedge \cG_0}$ (cf.~\cite[Lemma 5.2.1]{WeSemi}). 
\end{proof}

Let $\bom{X}\!_i$ be $(X_i^{q^{-j}} )_{j \geq 0}$ for $1 \leq i \leq n$. 
We write 
$\delta(\bom{X}\!_1, \ldots,\bom{X}\!_n)$ for 
the $q$-th power compatible system 
$(\delta(X_1,\ldots,X_n)^{q^{-m}} )_{m \geq 0}$. 

For $q$-th power compatible systems 
$\bom{X}=(X^{q^{-j}} )_{j \geq 0}$ and 
$\bom{Y}=(Y^{q^{-j}} )_{j \geq 0}$ that take values in $\cO_{\bfC}$, 
we define $q$-th power compatible systems 
$\bom{X} +\bom{Y}$, $\bom{X} -\bom{Y}$ and $\bom{X} \bom{Y}$ 
by the requirement that their $j$-th components for $j \geq 0$ are 
\[
 \lim_{m \to \infty} (X^{q^{-m}} +Y^{q^{-m}})^{q^{m-j}}, \quad 
 \lim_{m \to \infty} (X^{q^{-m}} -Y^{q^{-m}})^{q^{m-j}}, \quad 
 \textrm{and} \quad 
 X^{q^{-j}} Y^{q^{-j}}
\]
respectively. 
For such $\bom{X}=(X^{q^{-j}} )_{j \geq 0}$, 
we put $v(\bom{X}) = v(X)$. 
We put 
\begin{equation}\label{eq:del0'def}
 \delta'_0 (\bom{X}\!_1, \ldots,\bom{X}\!_n) 
 =\sum_{(m_1,\ldots ,m_n)} 
 \sgn (m_1,\ldots ,m_n) 
 \bom{X}\!_1^{\,q^{m_1}} \cdots \bom{X}\!_n^{\,q^{m_n}} , 
\end{equation}
where 
we take the sum in the above sense and 
the index set is the same as \eqref{deltam}. 
\begin{lem}\label{mixed}
Assume that $n \geq 2$ and 
$v(\bom{X}\!_i) \geq (n q^{i-1}(q-1))^{-1}$ for $1 \leq i \leq n$. 
Then, we have 
\[
 \delta(\bom{X}\!_1, \ldots,\bom{X}\!_n) \equiv 
 \delta'_0 (\bom{X}\!_1,\ldots,\bom{X}\!_n) \mod\!_>\, 
 \frac{1}{n}+\frac{1}{q-1}. 
\]
\end{lem}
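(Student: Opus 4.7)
The plan is to use the definition $\delta = \lim_{m\to\infty}\delta_m^{q^m}$ and establish the uniform-in-$m$ estimate $(\delta_m^{q^m} - \delta_0)(\bom{X}_1,\ldots,\bom{X}_n) \equiv 0 \mod\!_>\, 1/n + 1/(q-1)$; the lemma then follows by passing to the limit. Writing $\delta_m = \sum_{\vec{m},\sigma}\mathrm{sgn}(\sigma)\,M_{\vec{m},\sigma}$ with $M_{\vec{m},\sigma} = \prod_i X_i^{q^{nm_i-m+\sigma(i)-1}}$ (where $\vec{m} \in \bZ^n$ satisfies $\sum_i m_i = 0$ and $\sigma \in S_n$), I expand $\delta_m^{q^m}$ via the multinomial theorem and split the result into \emph{pure} contributions (a single index raised to the full power $q^m$) and \emph{mixed} contributions.

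The pure contribution equals $\sum_{\vec{m},\sigma}(\mathrm{sgn}(\sigma))^{q^m}\prod_i X_i^{q^{nm_i+\sigma(i)-1}}$. For $q$ odd, $(\mathrm{sgn}(\sigma))^{q^m} = \mathrm{sgn}(\sigma)$ and this equals $\delta_0$ exactly. For $q$ even (hence $q^m$ even for $m \geq 1$), the pure contribution differs from $\delta_0$ by $2\sum_{\sigma\text{ odd}}\prod_i X_i^{q^{nm_i+\sigma(i)-1}}$, whose coefficient $2$ has valuation $\geq v(p) \geq 1$. Each mixed contribution carries a multinomial coefficient $\binom{q^m}{k_1,\ldots,k_r}$ with at least two $k_j \geq 1$; since $q^m = p^{mf}$ has a single nonzero base-$p$ digit, any nontrivial partition forces at least one carry, so Kummer's theorem gives $p$-adic valuation $\geq 1$ and hence $v$-valuation $\geq v(p) \geq 1$.

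For the monomial factor $\prod_j M_{\vec{m}^{(j)},\sigma^{(j)}}^{k_j}$ (with $\sum_j k_j = q^m$), its valuation on $(\bom{X}_1,\ldots,\bom{X}_n)$ equals $\sum_j k_j\, v(M_{\vec{m}^{(j)},\sigma^{(j)}})$. The hypothesis $v(\bom{X}_i) \geq (nq^{n-i}(q-1))^{-1}$ gives
\[
 v(M_{\vec{m},\sigma}) \geq \frac{q^{-m}}{n(q-1)}\sum_i q^{nm_i+\sigma(i)+i-n-1},
\]
and the exponents $nm_i+\sigma(i)+i-n-1$ sum to $0$ (using $\sum m_i = 0$ and $\sum\sigma(i) = \sum i = n(n+1)/2$), so the AM--GM inequality yields $\sum_i q^{nm_i+\sigma(i)+i-n-1} \geq n$ and hence $v(M_{\vec{m},\sigma}) \geq q^{-m}/(q-1)$. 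Weighting by the $k_j$'s and summing gives total monomial valuation $\geq 1/(q-1)$. Combined with the coefficient valuation $\geq 1$, every nontrivial error contribution has total valuation $\geq 1 + 1/(q-1) > 1/n + 1/(q-1)$ for $n \geq 2$ (the case $n = 1$ is trivial since $\delta = \delta_0 = X_1$).

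The main obstacle is the sign bookkeeping in characteristic $2$ for the pure contribution, together with verifying that the term-by-term valuation estimate survives the infinite summation and the limit $m \to \infty$; both reduce to routine checks once the AM--GM bound on $v(M_{\vec{m},\sigma})$ is in place.
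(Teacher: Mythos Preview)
Your proof is correct and follows essentially the same approach as the paper: expand $\delta_m^{q^m}$ multinomially, match the pure terms with $\delta_0$ (absorbing the sign discrepancy for $p=2$ into $v(2)\geq 1$), and bound the mixed terms using $p$-divisibility of the multinomial coefficients together with a uniform lower bound on the monomial valuations. The only difference is that the paper bounds each monomial $f_i$ of $\delta_0$ more crudely via $\deg f_i \geq (q^n-1)/(q-1)$ and the minimal variable valuation, obtaining $v(f_i) > (q^n-1)/(nq^{n-1}(q-1)^2)$ and then verifying $1 + (q^n-1)/(nq^{n-1}(q-1)^2) \geq 1/n + 1/(q-1)$, whereas your AM--GM argument yields the sharper $v(f_i) \geq 1/(q-1)$ directly.
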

\begin{proof}
It suffices to work on $X_1, \ldots,X_n$. 
We write 
\[
 \delta_0=\wh{\wedge \cG_0} 
 \sum_{i=1}^{\infty} \left( 
 \iota_i\prod_{j=1}^n X_j^{r_{i,j}} \right)
\]
with $\iota_i \in \{ \pm 1 \}$ and positive rational numbers 
$r_{i,j} \in \mathbb{Z}[q^{-1}]$ as in \eqref{deltam}. 
We set 
$f_i=\prod_{j=1}^n X_j^{r_{i,j}}$ for all $i \geq 1$. 

We show that $v (f_i) \geq 1/(q-1)$ for all $i \geq 1$. 
For this, we may assume that 
$v(X_i) = (n q^{i-1}(q-1))^{-1}$ for all $1 \leq i \leq n$. 
By this equality and the definition of $\delta_0$, 
the minimality of $v (f_i)$ is achived if 
$r_{i,j}=q^{j-1}$ for $1 \leq j \leq n$, 
in which case $v (f_i) = 1/(q-1)$. 

We note that 
$\delta_m^{q^m} \equiv \delta_0 \mod\!_>\, 1$. 
Hence we obtain 
\[
 \delta_m^{q^m} \equiv \delta_0 \equiv 
 \delta'_0 \mod\!_>\, 
 \frac{1}{n}+\frac{1}{q-1} 
\]
by Lemma \ref{appsum} and $v (f_i) \geq 1/(q-1)$ for $i \geq 1$. 
The claim follows from this. 
\end{proof}

\subsection{Group action on the formal model}\label{gpfm}
We define a group action on the formal scheme 
$\bfM_{\infty,\mathcal{O}_{\mathbf{C}}}^{(0)}$, 
which is compatible with usual group actions on 
Lubin-Tate spaces with finite level (cf.~\cite[2.11]{BWMax}). 
We put 
\[
 G=\mathit{GL}_n(K) \times D^{\times} \times W_K.
\] 
Let $G^0$ denote the kernel of the following homomorphism:
\[
 G \to \mathbb{Z} ;\ (g,d,\sigma) \mapsto v 
 \bigl( \det(g)^{-1} 
 \mathrm{Nrd}_{D/K}(d)
 \mathrm{Art}^{-1}_K(\sigma) \bigr). 
\]
Then, the formal scheme 
$\bfM_{\infty, \mathcal{O}_{\mathbf{C}}}^{(0)}$ 
admits a right action of $G^0$. 
We write down the action. 
In the sequel, we use the following notation: 
\begin{quote}
For $a \in \mu_{q^n -1} (K_n) \cup \{0\}$, 
let $a^{q^{-m}}$ denote the 
$q^m$-th root of $a$ in $\mu_{q^n -1} (K_n) \cup \{0\}$ 
for a positive integer $m$, and 
we simply write $a$ also for 
$q$-th power compatible system $(a^{q^{-m}})_{m \geq 0}$. 
\end{quote}

For $q$-th power compatible systems 
$\bom{X}=(X^{q^{-j}} )_{j \geq 0}$ and 
$\bom{Y}=(Y^{q^{-j}} )_{j \geq 0}$ that take values in $\cO_{\bfC}$, 
we define $q$-th power compatible system 
$\bom{X} +_{\wh{\cG_0}} \bom{Y}$ 
by the requirement that their $j$-th components for $j \geq 0$ are 
$\lim_{m \to \infty} (X^{q^{-m}} +_{\wh{\cG_0}} Y^{q^{-m}})^{q^{m-j}}$. 
The symbol $\wh{\cG_0} \sum$ 
denotes this summation for $q$-th power compatible systems. 

First, we define 
a left action of $\mathit{GL}_n(K) \times D^{\times}$
on the ring 
$B_n=\mathcal{O}_{\mathbf{C}} 
 [[X_1^{1/q^{\infty}},\ldots,X_n^{1/q^{\infty}}]]$. 
For 
$a=\sum_{j=l}^{\infty}a_j\varpi^{j} \in K$ with 
$l \in \mathbb{Z}$ and 
$a_j \in \mu_{q-1} (K) \cup \{0\}$, 
we set
\[
 [a] \cdot \bom{X}\!_i = 
 \wh{\cG_0} \sum_{j=l}^{\infty} a_j \bom{X}\!_i^{\,q^{jn}} 
\]
for $1 \leq i \leq n$. 
Let $g \in \mathit{GL}_n(K)$. 
We write 
$g =(a_{i,j})_{1 \leq i,j \leq n}$.
Then, let $g$ act on the ring $B_n$
by 
\begin{equation}\label{gl}
 g^{\ast} \colon B_n \to B_n;\ \bom{X}\!_i \mapsto 
 \wh{\cG_0} \sum_{j=1}^n [a_{j,i}] \cdot \bom{X}\!_j 
 \quad \textrm{for $1 \leq i \leq n$}.
\end{equation}
Let $d \in D^{\times}$. 
We write 
$d^{-1} =\sum_{j=l}^{\infty} d_j \varphi^j \in D^{\times}$ 
with 
$l \in \mathbb{Z}$ 
and $d_j \in \mu_{q^n -1} (K_n) \cup \{0\}$. 
Then, let $d$ act on $B_n$ by 
\begin{equation}\label{divi}
 d^{\ast} \colon B_n \to B_n ;\ 
 \bom{X}\!_i \mapsto 
 \wh{\cG_0} \sum_{j=l}^{\infty } d_j \bom{X}\!_i^{\,q^{j}} 
 \quad \textrm{for $1 \leq i \leq n$}.
\end{equation}
Now, we give a right action of $G^0$ on 
$\bfM_{\infty, \mathcal{O}_{\mathbf{C}}}^{(0)}$ 
using Theorem \ref{thm:Astr}, 
\eqref{gl} and \eqref{divi}. 
Let $(g,d,1) \in G^0$. 
We set 
$\gamma(g,d)=
 \det (g) \mathrm{Nrd}_{D/K}(d)^{-1} \in 
 \mathcal{O}_K^{\times}$. 
We put $\bom{t} =(t^{q^{-m}})_{m \geq 0}$. 
Let $(g,d,1)$ act on 
$\bfM_{\infty, \mathcal{O}_{\mathbf{C}}}^{(0)}$ 
by 
\[
 A^{\alpha} \to A^{\gamma(g,d)^{-1} \alpha };\ 
 \bom{X}\!_i \mapsto (g,d) \cdot \bom{X}\!_i 
 \quad \textrm{for $1 \leq i \leq n$}, 
\]
where $\alpha \in \cO_K^{\times}$. 
This is well-defined, because 
the equation 
\[
 \delta ((g,d) \cdot \bom{X}\!_1 , \ldots, (g,d) \cdot \bom{X}\!_n )=
 \Art_K (\alpha) (\bom{t}) 
\]
is equivalent to 
$\delta (\bom{X}\!_1 , \ldots, \bom{X}\!_n )=
 \Art_K (\gamma(g,d)^{-1} \alpha) (\bom{t})$ 
by \cite[2.9]{WeSemi}. 
Let $(1,\varphi^{-n_{\sigma}},\sigma) \in G^0$ act on 
$\bfM_{\infty, \mathcal{O}_{\mathbf{C}}}^{(0)}$ by 
\[
 A^{\alpha} \to A^{a_K(\sigma)\alpha};\ \bom{X}\!_i \mapsto 
 \bom{X}\!_i, \hspace{1.0em} x \mapsto 
 \sigma (x) \quad 
 \textrm{ for $1 \leq i \leq n$ and $x \in \mathcal{O}_{\mathbf{C}}$}, 
\]
where $\alpha \in \cO_K^{\times}$. 
Thus, we have a right action of $G^0$ on 
$\bfM_{\infty,\mathcal{O}_{\mathbf{C}}}$, 
which induces a right action on 
$\mathcal{M}^{(0)}_{\infty,\bar{\eta}} (\bfC) = \cM_{\infty}(\bfC)$. 

\begin{rem}\label{Ktri}
For $a \in K^{\times}$, the action of $(a,a,1) \in G^{0}$ 
is trivial by the definition. 
\end{rem}

\subsection{CM points}\label{CMpts}
We recall the notion of CM points from \cite[3.1]{BWMax}. 
Let $L$ be a finite extension of $K$ of degree $n$ 
inside $\bfC$. 

\begin{defn}
A deformation $\cG$ of $\cG_0$ over $\cO_{\bfC}$ 
has CM by $L$ if 
there is an isomorphism 
$L \xrightarrow{\sim} \End (\cG) \otimes_{\cO_K} K$ 
as $K$-algebras 
such that the induced map 
$L \to \End (\Lie \cG) \otimes_{\cO_K} K \simeq \bfC$ 
coincides with the natural embedding $L \subset \bfC$. 
\end{defn}

We say that a point of $\cM_{\infty}(\bfC)$ 
has CM by $L$ if the corresponding deformation over $\cO_{\bfC}$ 
has CM by $L$. 

Let $\xi \in \cM_{\infty}(\bfC)$ be a 
point that has CM by $L$. 
Let $(\cG,\phi,\iota)$ be the triple corresponding to $\xi$. 
Then we have embeddings 
$i_{M,\xi} \colon L \to M_n (K)$ 
and 
$i_{D,\xi} \colon L \to D$ 
characterized by the commutative diagrams 
\[
 \xymatrix{
 K^n \ar@{->}^-{\phi}[r] \ar@{->}_-{i_{M,\xi} (a)}[d] & 
 V_{\fp} \cG  
 \ar@{->}^-{V_{\fp} (a)}[d] \\ 
 K^n \ar@{->}^-{\phi}[r] & 
 V_{\fp} \cG  
 }
 \quad \quad \lower20pt\hbox{\textrm{and}} \quad \quad 
 \xymatrix{
 \cG_0 \ar@{->}^-{\iota}[r] \ar@{->}_-{i_{D,\xi} (a)}[d] & 
 \cG \otimes_{\cO_{\bfC}} k^{\rmac}  
 \ar@{->}^-{a \otimes \id }[d] \\ 
 \cG_0 \ar@{->}^-{\iota}[r] & 
 \cG \otimes_{\cO_{\bfC}} k^{\rmac} 
 }
\]
in the category of $K$-vector spaces and 
in the isogeny category of $p$-divisible groups over $k^{\rmac}$ 
for $a \in L$, respectively. 
We put 
$i_{\xi}=(i_{M,\xi} ,i_{D,\xi}) \colon L \to M_n (K) \times D$. 
We put 
\[
 (\iGL_n (K) \times D^{\times})^0 =\{ (g,d) \in 
 \iGL_n (K) \times D^{\times} \mid (g,d,1) \in G^0 \}. 
\]

\begin{lem}\cite[Lemma 3.1.2]{BWMax}\label{CMtrans}
The group $(\iGL_n (K) \times D^{\times})^0$ 
acts transitively on the set of the points of 
$\cM_{\infty}(\bfC)$ 
that have CM by $L$. 
For $\xi \in \cM_{\infty}(\bfC)$ 
that has CM by $L$, 
the stabilizer of $\xi$ in $(\iGL_n (K) \times D^{\times})^0$ 
is $i_{\xi} (L^{\times})$. 
\end{lem}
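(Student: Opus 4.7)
The plan is to exploit the formal $\cO_L$-module structure that a CM-by-$L$ triple acquires. Given $\xi \in \cM^{(0)}_{\infty,\ol{\eta}}(\bfC)$ with CM by $L$ and associated triple $(\cG,\phi,\iota)$, the isomorphism $L \xrightarrow{\sim} \End(\cG) \otimes_{\cO_K} K$ together with the Lie-algebra condition upgrades $\cG$ to a one-dimensional formal $\cO_L$-module; the equality $n = [L:K]$ forces its $\cO_L$-height to be one, and the fact that $\End(\cG) \otimes_{\cO_K} K$ acts on the one-dimensional $\Lie \cG \subset \bfC$ forces this algebra to be commutative and hence equal to $L$.

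For transitivity, take two CM-by-$L$ points $\xi, \xi'$ with triples $(\cG,\phi,\iota)$ and $(\cG',\phi',\iota')$. By Lubin-Tate theory applied over $\cO_L$, any two one-dimensional formal $\cO_L$-modules of $\cO_L$-height one over $\cO_\bfC$ are isomorphic, so I would choose an $\cO_L$-linear quasi-isogeny $f \colon \cG \to \cG'$ and set
\[
 g = \phi'^{-1} \circ V_{\fp}(f) \circ \phi \in \iGL_n(K), \qquad
 d = \iota'^{-1} \circ (f \otimes k^{\rmac}) \circ \iota \in D^{\times}.
\]
Unwinding the definitions shows that $(g,d)$ carries $\xi$ to $\xi'$. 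To check membership in $(\iGL_n(K) \times D^{\times})^0$, I would use that $V_{\fp}(f)$ is $L$-linear when $V_{\fp}(\cG)$ and $V_{\fp}(\cG')$ are viewed as rank-one $L$-modules, and likewise for the reduction, so both $\det(g)$ and $\Nrd(d)$ are determined by the same $\cO_L$-degree of $f$; consequently $v(\det g) = v(\Nrd d)$, which is exactly the condition $(g,d,1)\in G^0$.

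For the stabilizer, $(g,d) \in (\iGL_n(K) \times D^{\times})^0$ fixes $\xi$ iff there exists a quasi-isogeny $f \colon \cG \to \cG$ with $V_{\fp}(f) \circ \phi = \phi \circ g$ and $(f \otimes k^{\rmac}) \circ \iota = \iota \circ d$. Since $\End(\cG) \otimes_{\cO_K} K = L$, such an $f$ is given by a unique $a \in L^{\times}$; the two diagrams defining $i_{M,\xi}$ and $i_{D,\xi}$ then read exactly $g = i_{M,\xi}(a)$ and $d = i_{D,\xi}(a)$, so the stabilizer equals $i_{\xi}(L^{\times})$.

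The main obstacle is the verification that the $(g,d)$ produced from $f$ lies in $G^0$: this amounts to reconciling two different encodings of the $\cO_L$-degree of $f$, one on the generic fiber via $V_{\fp}$ and one on the special fiber via reduction mod $\fm$, and checking that they yield the same $\varpi$-adic valuation. I would handle this by allowing $f$ to be rescaled by any element of $L^{\times}$ and observing that such a rescaling shifts $v(\det g)$ and $v(\Nrd d)$ by the same amount, reducing the check to the case when $f$ is a genuine $\cO_L$-isomorphism, where both valuations vanish.
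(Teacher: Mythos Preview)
The paper itself gives no proof of this lemma; it is stated with a citation to \cite[Lemma~3.1.2]{BWMax} and used as a black box. Your outline is essentially the argument found there, and it is correct.

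One remark on your final paragraph. Your rescaling reduction to the case where $f$ is a genuine $\cO_L$-isomorphism immediately gives $d \in \cO_D^{\times}$, hence $v(\Nrd_{D/K}(d)) = 0$; but to conclude $v(\det g) = 0$ you need the further observation that the level structures $\phi,\phi'$ attached to points of $\cM^{(0)}_{\infty,\ol\eta}(\bfC)$ carry $\cO_K^n$ onto the integral Tate modules $T_{\fp}(\cG), T_{\fp}(\cG')$ (this is what the inverse limit of Drinfeld $\fp^m$-level structures produces), so that $V_{\fp}(f)$ restricts to an isomorphism of these lattices and $g \in \iGL_n(\cO_K)$. Alternatively, one can bypass the computation entirely: the full Lubin-Tate tower $\coprod_{j\in\bZ}\cM^{(j)}_{\infty,\ol\eta}$ carries an action of all of $\iGL_n(K)\times D^{\times}$, and the components are permuted according to $(g,d)\mapsto v\bigl(\det(g)^{-1}\Nrd_{D/K}(d)\bigr)$; since $\xi$ and $\xi'$ both lie in the $j=0$ component, any $(g,d)$ carrying one to the other is automatically in $(\iGL_n(K)\times D^{\times})^0$.
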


\section{Reductions of formal models of affinoids}\label{GoodRedAff}
\subsection{Construction of affinoids}\label{ConstAff}
We assume that 
$n \geq 2$ and 
$n$ is prime to $p$. 
We put 
\begin{equation}\label{eq:defnq}
 n_q =\gcd (n,q-1). 
\end{equation} 
For a uniformizer $\varpi$ of $K$, 
we put $L_{\varpi} =K[T]/(T^n -\varpi)$. 
Let $T(K,n)$ be the set of the isomorphism classes of 
totally ramified extensions of 
$K$ of degree $n$. 
\begin{lem}\label{ttbij}
We have the bijection 
\[
 \mu_{(q-1)/n_q} (K) \backslash (\fp_K -\fp_K^2 )/\fp_K^2 
 \to T(K,n); \ 
 \varpi \mapsto L_{\varpi}. 
\]
\end{lem}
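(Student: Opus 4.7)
The plan is to verify the map is well-defined, and then establish surjectivity and injectivity separately, with Hensel's lemma applied to $X^n - c$ (valid because $n$ is prime to $p$) being the workhorse throughout.

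For well-definedness, I must check that the assignment $\varpi \mapsto L_\varpi$ factors through both quotients. Using that $\mu_{q-1}(K)$ is cyclic of order $q-1$ and that its image under the $n$-th power map has order $(q-1)/n_q$, any $\zeta \in \mu_{(q-1)/n_q}(K)$ is of the form $\zeta_0^n$ for some $\zeta_0 \in \mu_{q-1}(K)$, so $K((\zeta\varpi)^{1/n}) = K(\zeta_0 \varpi^{1/n}) = L_\varpi$. For the second quotient, a uniformizer $\varpi'$ with $\varpi' \equiv \varpi \mod \fp_K^2$ satisfies $\varpi' = \varpi(1+z)$ with $z \in \fp_K$, and Hensel applied to $X^n - (1+z)$ yields $(1+z)^{1/n} \in \cO_K^\times$, hence $L_{\varpi'} = K((1+z)^{1/n}\varpi^{1/n}) = L_\varpi$.

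For surjectivity, take $L \in T(K,n)$ and any uniformizer $\pi_L \in L$. Its minimal polynomial is Eisenstein, so $\pi_L^n = -a_0 - a_1 \pi_L - \cdots - a_{n-1}\pi_L^{n-1}$ with $a_i \in \fp_K$ and $a_0 \in \fp_K - \fp_K^2$. Since each $v_L(a_i \pi_L^i) \geq n+i > n$ for $i \geq 1$, we can write $\pi_L^n = -a_0(1+\eta)$ with $\eta \in \fp_L$. Applying Hensel in $\cO_L$ produces $u = (1+\eta)^{-1/n} \in 1+\fp_L$, and then $\pi_L' := u\pi_L$ is a uniformizer of $L$ with $(\pi_L')^n = -a_0 \in \fp_K - \fp_K^2$, so $L \simeq L_{-a_0}$.

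For injectivity, suppose $L_\varpi \simeq L_{\varpi'}$ as $K$-algebras. Transport the canonical root $T' = \varpi'^{1/n}$ into $L_\varpi$; it is a uniformizer, so $T' = u\pi$ for a unit $u \in \cO_{L_\varpi}^\times$, where $\pi = \varpi^{1/n}$. Then $u^n = \varpi'/\varpi \in \cO_K^\times$. Since $L_\varpi/K$ is totally ramified, its residue field is $k$, so I can decompose $u = \bar\zeta(1+y)$ with $\bar\zeta \in \mu_{q-1}(K)$ (Teichm\"uller lift) and $y \in \fp_L$. Then $(1+y)^n = u^n / \bar\zeta^n \in K^\times$, and since it lies in $(1+\fp_L) \cap K = 1+\fp_K$, we get $\varpi'/\varpi = \bar\zeta^n \cdot (1+\fp_K) \in \mu_{(q-1)/n_q}(K)\cdot(1+\fp_K)$, proving $\varpi \sim \varpi'$ in the double quotient. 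The only real subtlety is the identity $(1+\fp_L)\cap K = 1+\fp_K$, which is immediate from total ramification, and the surjectivity of the $n$-th power map $\mu_{q-1}(K) \to \mu_{(q-1)/n_q}(K)$ used in well-definedness; there is no serious obstacle beyond careful bookkeeping of the equivalence relation.
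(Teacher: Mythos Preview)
Your proof is correct and follows the same approach as the paper's, which simply asserts that well-definedness and injectivity are easy and cites \cite[II Proposition 12]{LangANT} for surjectivity. You have supplied the details that the paper leaves implicit: the Hensel argument for $X^n - c$ (using that $n$ is prime to $p$) underlies both the well-definedness modulo $\fp_K^2$ and the surjectivity step, and your Teichm\"uller decomposition $u = \bar\zeta(1+y)$ is exactly the ``easy'' injectivity the paper alludes to.
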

\begin{proof}
We can see the well-definedness and the injectivity easily. 
The surjectivity follows from 
\cite[II Proposition 12]{LangANT}. 
\end{proof}

Let $L$ be a 
totally ramified extension of 
$K$ of degree $n$ in $\bfC$. 
By a representation theoretic reason, 
an affinoid which we try to find should be stable under the action of 
a subgroup of $(\iGL_n (K) \times D^{\times})^0$ 
containing $i_{\xi} (L^{\times})$ for some point $\xi$ having CM by $L$. 
See Subsection \ref{subsec:stab} and 
Section \ref{RealLLCLJLC} for the precise situation for the 
stabilizer and the representation theory. 
By this reason, we can expect that 
the desired affinoid is defined near the CM point. 
The coordinate of the Lubin-Tate perfectoid space 
introduced in Subsection \ref{LTpsfm} 
depends on the choice of a uniformizer $\varpi$ of $K$. 
In the following, we pick up a 
coordinate of the Lubin-Tate perfectoid space, 
which is suitable to describe a CM point by $L$, 
by chosing a uniformizer of $K$. 
Then we define an affinoid near the CM point. 
We will see in Subsection \ref{subsec:stab} that 
the affinoid has an appropriate stabilizer. 
A similar situation should occur for a more general class of representations 
(\cf Remark \ref{rem:compari}). 

We take a uniformizer $\varpi_L$ of $K$ such that $L \simeq L_{\varpi_L}$. 
Further, we take $\varphi_L \in L$ such that $\varphi_L^n =\varpi_L$. 
By the $\cO_K$-algebra embedding 
$\cO_L \to \cO_D$ defined by $\varphi_L \mapsto \varphi$, 
we view $\cG_0$ as a formal $\cO_L$-module of height $1$. 
Let $\cG^L$ 
be a lift of $\cG_0$ to $\cO_{\wh{L}^{\rmur}}$ 
as formal $\cO_{L}$-modules. 
We take a compatible system $\{t_{L,m}\}_{m \geq 1}$ 
such that 
\[
 t_{L,m} \in \bfC \quad (m \geq 1), \quad 
 t_{L,1} \neq 0, \quad 
 [\varphi_{L}]_{\cG^L}(t_{L,1})=0, \quad 
 [\varphi_{L}]_{\cG^L}(t_{L,m})=t_{L,m-1} \quad (m \geq 2).
\]
We apply results in Section \ref{LTps} for $\varpi =\varpi_L$. 
We put 
\[
 \varphi_{M,L} = 
 \begin{pmatrix}
 \bm{0} & I_{n-1} \\
 \varpi_L & \bm{0} \\
 \end{pmatrix}
 \in M_n (K) 
\]
and $\varphi_{D,L} = \varphi \in D$. 

For $\xi \in \cM_{\infty}(\bfC)=\cM^{(0)}_{\infty,\ol{\eta}}(\bfC)$, 
let $(\bom{\xi}_1,\ldots,\bom{\xi}_n)$ be the coordinate of $\xi$ 
with respect to $(\bom{X}\!_1,\ldots,\bom{X}\!_n)$, 
where $\bom{\xi}_i =(\xi_{i}^{q^{-j}})_{j \geq 0}$ for 
$1 \leq i \leq n$ (\cf Remark \ref{rem:univbasis}). 

\begin{lem}\label{exxi}
There exists 
$\xi_L \in \cM_{\infty}(\bfC)$ 
such that 
\begin{equation}\label{xilim}
 \xi_{L,i}^{q^{-j}} =\lim_{m \to \infty}  
 t_{L,m}^{q^{m-i-j}} \in \mathcal{O}_{\mathbf{C}} 
\end{equation}
for $1 \leq i \leq n$ and $j \geq 0$. 
Further, we have the following: 
\begin{enumerate}
\item 
The point $\xi_L$ has CM by $L$. 
\item 
We have 
$i_{\xi_L} (\varphi_{L}) = 
 (\varphi_{M,L} ,\varphi_{D,L} ) \in M_n (K) \times D$. 
\item 
We have $\bom{\xi}_{L,i} = \bom{\xi}_{L,i+1}^q$ 
for $1 \leq i \leq n-1$. 
\item 
We have $v(\xi_{L,i})=1/(n q^{i-1}(q-1))$ for $1 \leq i \leq n$. 
\end{enumerate}
\end{lem}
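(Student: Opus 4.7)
The plan is to produce $\xi_L$ directly from the formal $\cO_L$-module $\cG^L$ together with its compatible system of $\varphi_L$-division points. Viewed as a formal $\cO_K$-module by restriction of scalars along $\cO_K \hookrightarrow \cO_L$, the formal group $\cG^L$ is one-dimensional of height $n$ over $\cO_{\wh{L}^{\rmur}} \subset \cO_\bfC$, and its special fibre is isomorphic to $\cG_0$; fix an isomorphism $\iota_L$. The embedding $\cO_L \hookrightarrow \End \cG^L$ then descends under $\iota_L$ to $\cO_L \hookrightarrow \cO_D$ sending $\varphi_L \mapsto \varphi_{D,L}$ by the very definition of $\varphi_{D,L}$.

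Since $\cG^L[\fp^m] = \cG^L[\varphi_L^{nm}]$ is a free $\cO_L/\varpi_L^m$-module of rank one on $t_{L,nm}$, the $\cO_K$-basis $1,\varphi_L,\dots,\varphi_L^{n-1}$ of $\cO_L$ yields the map
\[
 \phi_m\colon (\fp^{-m}/\cO_K)^n \to \cG^L[\fp^m](\cO_\bfC),\qquad
 e_i \mapsto [\varphi_L^{n-i}]_{\cG^L}(t_{L,nm})=t_{L,n(m-1)+i},
\]
which by the $\cO_L$-cyclicity is a full set of sections in the sense of Katz--Mazur, hence a Drinfeld level $\fp^m$-structure. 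Level compatibility $[\varpi](\phi_{m+1}(e_i))=\phi_m(e_i)$ follows from $[\varphi_L^n]=[\varpi_L]$. The triple $(\cG^L,(\phi_m)_m,\iota_L)$ therefore determines a point $\xi_L \in \cM^{(0)}_{\infty,\ol{\eta}}(\bfC)$ that has CM by $L$, giving (i).

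By the Remark following the structure theorem, the perfectoid coordinate is $\xi_{L,i}=\lim_m \phi_m(e_i)^{q^{(m-1)n}}=\lim_m t_{L,n(m-1)+i}^{q^{(m-1)n}}$, which under the reindexing $m'=n(m-1)+i$ becomes a subsequential limit of $t_{L,m'}^{q^{m'-i}}$. The full sequence $t_{L,m}^{q^{m-i}}$ is Cauchy in $\cO_\bfC$: from $t_{L,m+1}^q = t_{L,m} - \varphi_L t_{L,m+1}$ and the Frobenius-like expansion of $(a-b)^{q^{m-i}}$ (whose middle binomial coefficients have $p$-adic valuation growing with $m$), together with the estimate $v((\varphi_L t_{L,m+1})^{q^{m-i}})=q^{m-i}/n+q^{-i}/(n(q-1))\to\infty$, successive terms differ by elements of valuation going to infinity. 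By uniqueness the full and subsequential limits agree, establishing \eqref{xilim} for $j=0$; the identity for general $j$ follows by taking compatible $q$-th roots.

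Properties (ii)--(iv) remain. For (iv), the Newton polygon of $\varphi_L X + X^q - t_{L,m-1}=0$ gives inductively $v(t_{L,m})=1/(nq^{m-1}(q-1))$, hence $v(\xi_{L,i})=q^{m-i}v(t_{L,m})=1/(nq^{i-1}(q-1))$. Property (iii) is transparent from \eqref{xilim}: both $\bom{\xi}_{L,i}$ and $\bom{\xi}_{L,i+1}^q$ have $j$-th component $\lim_m t_{L,m}^{q^{m-i-j}}$. For (ii), compute $[\varphi_L]\phi_m(e_i)=t_{L,n(m-1)+i-1}=\phi_m(e_{i-1})$ for $2\leq i\leq n$ and $[\varphi_L]\phi_m(e_1)=t_{L,n(m-1)}=[\varpi_L](t_{L,nm})=\varpi_L\cdot\phi_m(e_n)$; via the defining diagram for $i_{M,\xi_L}$ this matrix is exactly $\varphi_{M,L}$. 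The principal obstacle is the middle step: verifying that $\phi_m$ is a genuine Drinfeld level structure and that its perfectoid coordinate matches the claimed limit formula; both are resolved by exploiting the $\cO_L$-cyclicity of the torsion and the reindexing/Cauchy estimates above.
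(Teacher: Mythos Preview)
Your proof is correct and follows essentially the same approach as the paper: both construct $\xi_L$ from the compatible system of Drinfeld bases $\{t_{L,n(m-1)+i}\}_{1 \leq i \leq n}$ for $\cG^L[\fp_K^m]$ arising from the $\cO_L$-module structure, and then read off properties (i)--(iv) from this construction. The paper's own proof is considerably more terse, simply asserting that the coordinates and properties follow ``by construction'', whereas you have filled in the Cauchy estimate justifying the full-sequence limit \eqref{xilim} and the explicit matrix computation for (ii).
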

\begin{proof}
For $m \geq 1$, 
the ordered set 
$\{ t_{L,j} \}_{n(m-1)+1 \leq j \leq nm}$ is an ordered basis of 
$\cG^L [\fp_{K}^m]$ over $\cO_K/\fp_{K}^m$. 
They form a compatible system of ordered base of 
$\cG^L [\fp_{K}^m]$ for $m \geq 1$ with respect to 
multiplication by $\varpi_L$. 
This system gives a point 
$\xi_L \in \cM_{\infty}(\bfC)$, 
whose coordinates are given as \eqref{xilim} 
by the construction of the parameter $(X_1,\ldots,X_n)$. 
The properties follow from the construction. 
\end{proof}

We take $\xi_L$ as in Lemma \ref{exxi}. 
If 
$\xi_L \in \mathcal{M}_{\infty,\bar{\eta},\sigma}(\bfC) \subset \cM_{\infty}(\bfC)$ (\cf \eqref{Mdec}), 
then we replace the choice of \eqref{tmcho} 
by the action of $\sigma$. 
Then we have 
$\delta (\bom{\xi}_1,\ldots,\bom{\xi}_n) =\bom{t}$ 
and 
$\xi_L \in \mathcal{M}_{\infty,\bar{\eta}}(\bfC) \subset \cM_{\infty}(\bfC)$. 

Let 
$\cD_{\bfC}^{n,\mathrm{perf}}$ 
be the generic fiber of 
$\Spf \mathcal{O}_{\mathbf{C}}
 [[X_1^{1/q^{\infty}},\ldots,X_n^{1/q^{\infty}}]]$. 
We consider 
$\cM_{\infty,\ol{\eta}}$ as a subspace of 
$\cD_{\bfC}^{n,\mathrm{perf}}$ 
by \eqref{Astr}. 
We define 
$\cX^{L} \subset \cM_{\infty,\ol{\eta}}$ by 
\begin{equation}\label{affc}
 v \biggl( \frac{X_n}{\xi_{L,n}} -1 
 \biggr)  \geq \frac{1}{2n q^{n-1}} \quad 
 \textrm{and} \quad 
 v \biggl( \frac{X_i}{\xi_{L,i}} 
 -\frac{X_{i+1}}{\xi_{L,i+1}} \biggr)  \geq \frac{1}{2n q^i} \quad 
 \textrm{for} \ 1 \leq i \leq n-1. 
\end{equation}
We define $\mathcal{B}^{L} \subset \cD_{\bfC}^{n,\mathrm{perf}}$ 
by the same condition \eqref{affc}. 
We note that the condition 
\eqref{affc} implies 
that 
\begin{equation}\label{eq:v(Xi)}
 v(X_i)=\frac{1}{n q^{i-1}(q-1)}, \quad  
v \biggl( \frac{X_i}{\xi_{L,i}} -1 \biggr) \geq \frac{1}{2n q^{n-1}} 
\end{equation}
for $1 \leq i \leq n$. 

\subsection{Formal models of affinoids}\label{RedAff}
Let $\bom{1}$ be the $q$-th power compatible system 
whose all components are $1$. 
Let $(\bom{X}\!_1,\ldots,\bom{X}\!_n)$ be the coordinate of $\cB^{L}$. 
We put 
$h (\bom{X}\!_1 ,\ldots ,\bom{X}\!_n )=\prod_{i=1}^n \bom{X}\!_i^{\,q^{i-1}}$. 
Further, we put 
\begin{align}
 f(\bom{X}\!_1, \ldots, \bom{X}\!_n) &=\bom{1}-\frac{\delta (\bom{X}\!_1, \ldots, \bom{X}\!_n)}{h (\bom{X}\!_1 ,\ldots ,\bom{X}\!_n )}, \label{fdef} \\ 
 f_0 (\bom{X}\!_1, \ldots, \bom{X}\!_n) &= 
 \sum_{i=1}^{n-1}
 \biggr( \frac{\bom{X}\!_i}{\bom{X}\!_{i+1}} \biggr)^{q^{i-1}(q-1)}
 +\biggl( \frac{\bom{X}\!_n^{\,q^n}}{\bom{X}\!_1} \biggr)^{\frac{q-1}{q}} \label{f0def}. 
\end{align}
We simply write $f(\bom{X})$ for $f(\bom{X}\!_1, \ldots, \bom{X}\!_n)$, and 
$f(\bom{\xi}_L)$ for $f(\bom{\xi}_{L,1}, \ldots, \bom{\xi}_{L,n})$. 
We will use the similar notations also for other functions. 
We put 
\begin{equation}\label{Zdef}
 \bom{Z} =f_0(\bom{X}) -f_0(\bom{\xi}_L). 
\end{equation}

\begin{lem}\label{fZapp}
We have 
\[
 f(\bom{X}) \equiv f_0 (\bom{X}) \mod\!_>\, \frac{q-1}{nq} 
 \quad \textrm{and} \quad 
 \bom{Z} \equiv f(\bom{X}) -f(\bom{\xi}_L) \mod\!_>\, \frac{1}{n} 
\]
on $\cB^{L}$. 
\end{lem}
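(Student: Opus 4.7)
The plan is to expand $\delta(\bom{X})$ using its defining series and control the remainders via Lemma \ref{mixed} and the affinoid conditions defining $\cB^L$.

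Writing $\delta = \delta_0 + (\delta - \delta_0)$, Lemma \ref{mixed} applies on $\cB^L$ (its valuation hypothesis follows from Lemma \ref{exxi}(iv) together with the defining conditions of $\cB^L$), giving $v(\delta - \delta_0) > 1/n + 1/(q-1)$, hence $v((\delta-\delta_0)/h) > 1/n$ since $v(h(\bom{X})) \geq 1/(q-1)$ on $\cB^L$. For $\delta_0$ itself, I expand $\delta_0 = \sum_{(m,\sigma)} \mathrm{sgn}(\sigma) \prod_i X_i^{q^{nm_i + \sigma(i) - 1}}$ over $(m_1,\ldots,m_n) \in \mathbb{Z}^n$ with $\sum m_i = 0$ and $\sigma \in S_n$. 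Setting $a_i = nm_i + \sigma(i) - i$ (so $\sum a_i = 0$), each $(m,\sigma)$-monomial divided by $h$ has valuation at $\bom{\xi}_L$ exceeding $1/(q-1)$ by $(\sum q^{a_i} - n)/(n(q-1))$. A combinatorial enumeration shows that the minimal nontrivial gain $(q-1)/(nq)$ (equivalently $\sum q^{a_i} = n + (q-1)^2/q$) is attained exactly for (a) $m = 0$ with $\sigma = (i,i+1)$ for $1 \leq i \leq n-1$, contributing $-(X_i/X_{i+1})^{q^{i-1}(q-1)}$, and (b) $(m_1, m_n) = (-1,1)$ (others zero) with $\sigma = (1,n)$, contributing $-(X_n^{q^n}/X_1)^{(q-1)/q}$. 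Together with the leading $1$ from $(0,\mathrm{id})$, this yields $\delta_0/h \equiv 1 - f_0 \mod_> (q-1)/(nq)$, and combined with the $(\delta-\delta_0)/h$ bound proves the first congruence.

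For the second congruence, setting $R = f - f_0$, the target is $v(R(\bom{X}) - R(\bom{\xi}_L)) > 1/n$. The $(\delta - \delta_0)/h$ contribution to $R$ has valuation $> 1/n$ at every point of $\cB^L$, hence contributes $> 1/n$ to the difference. The remaining $R$-contributions come from $(m,\sigma)$-tuples whose gain is at least $2(q-1)/(nq)$: for $q \geq 3$ this strictly exceeds $1/n$, so the ultrametric inequality gives the bound directly. For $q = 2$ the borderline gain equals $1/n$, achieved by double-disjoint-adjacent-transposition tuples (and their $(1,n)$-mixing analogs), each contributing a monomial of the form $A_i(\bom{X}) A_j(\bom{X})$ with $A_k = (X_k/X_{k+1})^{q^{k-1}(q-1)}$. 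Using the identity $A_i A_j - A_i(\bom{\xi}_L) A_j(\bom{\xi}_L) = A_i(A_j - A_j(\bom{\xi}_L)) + (A_i - A_i(\bom{\xi}_L)) A_j(\bom{\xi}_L)$ together with the affinoid condition $v(y_i - y_{i+1}) \geq 1/(2nq^i)$ (for $y_k = X_k/\xi_{L,k}$), one shows that $v(A_k - A_k(\bom{\xi}_L))$ strictly exceeds $v(A_k(\bom{\xi}_L))$, upgrading the naive estimate to strict inequality.

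The main obstacle is the combinatorial enumeration of $(m,\sigma)$-tuples attaining small valuation gains, and particularly the borderline $q = 2$ analysis in the second congruence, where the naive ultrametric estimate is saturated at exactly $1/n$ and must be sharpened using the detailed affinoid structure.
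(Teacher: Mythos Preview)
Your approach is essentially the same as the paper's. The paper packages the next-order terms into an explicit function
\[
 f_1(\bom{X}) = \sum_{i=1}^{n-3}\sum_{j=i+2}^{n-1}
 \biggl(\frac{\bom{X}_i}{\bom{X}_{i+1}}\biggr)^{q^{i-1}(q-1)}
 \biggl(\frac{\bom{X}_j}{\bom{X}_{j+1}}\biggr)^{q^{j-1}(q-1)}
 + \biggl(\frac{\bom{X}_n^{q^n}}{\bom{X}_1}\biggr)^{\frac{q-1}{q}}
 \sum_{i=1}^{n-3}\biggl(\frac{\bom{X}_{i+1}}{\bom{X}_{i+2}}\biggr)^{q^i(q-1)},
\]
proves $f \equiv f_0 - f_1 \mod_> 1/n$ via Lemma~\ref{mixed} and the expansion of $\delta_0$, and then records the two estimates $v(f_1(\bom{X})) \geq 2(q-1)/(nq)$ and $v(f_1(\bom{X}) - f_1(\bom{\xi}_L)) > 2(q-1)/(nq)$; both congruences follow. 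Your ``double-disjoint-adjacent-transposition tuples and their $(1,n)$-mixing analogs'' are exactly the monomials of $f_1$, and your $q=2$ borderline analysis is precisely what underlies the paper's strict inequality for $f_1(\bom{X}) - f_1(\bom{\xi}_L)$ (note $2(q-1)/(nq) = 1/n$ iff $q=2$). One small slip: you write ``$v(h(\bom{X})) \geq 1/(q-1)$'' to justify $v((\delta-\delta_0)/h) > 1/n$, but the inequality runs the wrong way for that deduction; what you need (and what holds on $\cB^L$) is that $v(h(\bom{X})) = 1/(q-1)$ exactly.
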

\begin{proof}
We put 
\[
 f_1 (\bom{X}\!_1, \ldots, \bom{X}\!_n) =\sum_{i=1}^{n-3}
 \sum_{j=i+2}^{n-1}
 \biggl( \frac{\bom{X}\!_i}{\bom{X}\!_{i+1}} \biggr)^{q^{i-1}(q-1)}
 \biggl( \frac{\bom{X}\!_j}{\bom{X}\!_{j+1}} \biggr)^{q^{j-1}(q-1)}
 +\biggl( \frac{\bom{X}\!_n^{\,q^n}}{\bom{X}\!_1} \biggr)^{\frac{q-1}{q}} 
 \sum_{i=1}^{n-3}
 \biggl( \frac{\bom{X}\!_{i+1}}{\bom{X}\!_{i+2}} \biggr)^{q^i(q-1)}. 
\]
By the definition of $\delta_0'$ and \eqref{eq:v(Xi)}, 
we see that 
\[
 \frac{\delta_0' (\bom{X})}{h (\bom{X})}  \equiv 
 1 - f_0 (\bom{X}) + f_1 (\bom{X}) \mod\!_>\, \frac{1}{n}, 
\]
where the term $1$ comes from 
the index $(0, 1, \ldots , n-1)$ in \eqref{eq:del0'def}, 
the term 
$f_0 (\bom{X})$ comes from 
the indexes obtained from 
$(0, 1, \ldots , n-1)$ by adding $1$ to $i$-th component 
and substracting $1$ from $(i+1)$-th component for $1 \leq i \leq n$, 
(where $(n+1)$-th means $1$-st,) 
the term 
$f_1 (\bom{X})$ comes from 
the indexes obtained from 
$(0, 1, \ldots , n-1)$ by similar changes twice, 
and the terms coming from other indexes have higher valuations. 
Hence we have 
\[
 f(\bom{X}) \equiv 
 \bom{1}-\frac{\delta_0' (\bom{X})}{h (\bom{X})} \equiv 
 f_0 (\bom{X}) -f_1 (\bom{X}) \mod\!_>\, \frac{1}{n}
\]
using Lemma \ref{mixed}. 
The claims follow from this, 
because 
\[
 v(f_1 (\bom{X})) \geq \frac{2(q-1)}{nq} \quad \textrm{and} \quad 
 v \bigl( f_1 (\bom{X}) -f_1 (\bom{\xi}_L) \bigr) > \frac{2(q-1)}{nq} 
\]
by \eqref{eq:v(Xi)}. 
\end{proof}

We put $\bom{x}_i =\bom{X}\!_i /\bom{\xi}_{L,i}$ for 
$1 \leq i \leq n$. 
We set 
\begin{equation}\label{Ydef}
 \biggl( \frac{\bom{x}_i}{\bom{x}_{i+1}} 
 \biggr)^{q^i (q-1)} 
 =\bom{1} + \bom{Y}\!_i \quad \textrm{with} 
 \quad v(\bom{Y}\!_i) \geq \frac{1}{2n} 
\end{equation}
for $1 \leq i \leq n-1$.

We define a subaffinoid 
$\mathcal{B}'^{L} \subset \mathcal{B}^{L}$ 
by $v(\bom{Z}) \geq 1/n$. 
We put $\bom{\eta}_L =\bom{\xi}_{L,1}^{q-1}$. 
Note that $v(\bom{\eta}_L)=1/n$. 
We choose a square root $\bom{\eta}_L^{1/2}$ 
of $\bom{\eta}_L$. 
We set 
\begin{equation}\label{yzdef}
\begin{split}
 &\bom{Y}\!_i =\bom{\eta}_L^{1/2} \bom{y}_i 
 \quad \textrm{with} \quad \bom{y}_i =(y_i^{q^{-j}})_{j \geq 0} 
 \quad \textrm{for $1 \leq i \leq n-1$}, \\ 
 &\bom{Z} =\bom{\eta}_L \bom{z} \quad \textrm{with} \quad 
 \bom{z} =(z^{q^{-j}})_{j \geq 0} 
\end{split}
\end{equation}
on $\mathcal{B}'^{L}$. 
Let $\cB(\bom{y}, \bom{z})$ be the 
generic fiber of 
$\Spf \cO_{\bfC} \langle y_1^{1/q^{\infty}}, \ldots ,
 y_{n-1}^{1/q^{\infty}}, z^{1/q^{\infty}} \rangle$. 
The parameters $\bom{y}_1, \ldots ,\bom{y}_{n-1}, \bom{z}$ give the morphism 
$\Theta \colon \cB'^L \to \cB(\bom{y}, \bom{z})$. 
\begin{lem}\label{BBisom}
The morphism $\Theta$ is an isomorphism. 
\end{lem}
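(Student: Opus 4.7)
The plan is to construct an explicit inverse $\Theta^{-1}$ by solving algebraically for $(\bom{X}_1,\ldots,\bom{X}_n)$ in terms of $(\bom{y}_1,\ldots,\bom{y}_{n-1},\bom{z})$. First, $\Theta$ is well-defined: on $\cB'^L$, the definitions \eqref{Ydef} and the additional hypothesis $v(\bom{Z})\geq 1/n$ combined with $v(\bom{\eta}_L^{1/2})=1/(2n)$ and $v(\bom{\eta}_L)=1/n$ give $v(\bom{y}_i),v(\bom{z})\geq 0$, so the images lie in the affinoid algebra of $\cB(\bom{y},\bom{z})$. It remains to construct the inverse on algebras, which means expressing each $\bom{X}_i$ as a power series in $(\bom{y},\bom{z})$ over $\cO_{\bfC}$.

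For the inverse, start from
\[
 (\bom{x}_i/\bom{x}_{i+1})^{q^i(q-1)}=\bom{1}+\bom{\eta}_L^{1/2}\bom{y}_i,\qquad 1\leq i\leq n-1,
\]
where the right-hand perturbation has valuation $\geq 1/(2n)$. The $q$-th power compatible structure on $\cB(\bom{y},\bom{z})$ supplies the $q^i$-th root, and the residual $(q-1)$-th root of $\bom{1}+(\text{element of positive valuation})$ is extracted by the convergent binomial series; this uniquely solves for $\bom{x}_i/\bom{x}_{i+1}$ close to $\bom{1}$ as a power series in $\bom{y}_i$ over $\cO_{\bfC}$. Telescoping gives each $\bom{X}_i/\bom{X}_n=(\bom{\xi}_{L,i}/\bom{\xi}_{L,n})\prod_{j=i}^{n-1}(\bom{x}_j/\bom{x}_{j+1})$ as a power series in $(\bom{y}_1,\ldots,\bom{y}_{n-1})$. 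Then use $\bom{Z}=\bom{\eta}_L\bom{z}$ to pin down $\bom{X}_n$: substituting the known ratios into \eqref{f0def} turns $\sum_{i=1}^{n-1}(\bom{X}_i/\bom{X}_{i+1})^{q^{i-1}(q-1)}$ into a function $g(\bom{y})$, and the final term $(\bom{X}_n^{q^n}/\bom{X}_1)^{(q-1)/q}$, via $\bom{X}_1=\bom{X}_n(\bom{\xi}_{L,1}/\bom{\xi}_{L,n})\prod_{j=1}^{n-1}(\bom{x}_j/\bom{x}_{j+1})$, becomes a $\bom{y}$-dependent unit times $\bom{X}_n^{(q^n-1)(q-1)/q}$. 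The equation $f_0(\bom{X})-f_0(\bom{\xi}_L)=\bom{\eta}_L\bom{z}$ is therefore a single equation in $\bom{X}_n$ with solution $\bom{X}_n=\bom{\xi}_{L,n}$ at the origin $(\bom{y},\bom{z})=(\bom{0},\bom{0})$; extracting the nearby root by the perfectoid structure and the binomial series recovers $\bom{X}_n$, and hence all $\bom{X}_i$, as power series in $(\bom{y},\bom{z})$ over $\cO_{\bfC}$.

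To finish, verify that the formulas so obtained define an $\cO_{\bfC}$-algebra homomorphism from the affinoid algebra of $\cB'^L$ to that of $\cB(\bom{y},\bom{z})$, that the image satisfies the defining inequalities \eqref{affc} and $v(\bom{Z})\geq 1/n$ of $\cB'^L$, and that it is inverse to $\Theta$; the first two are forced by the estimates $v(\bom{\eta}_L^{1/2})=1/(2n)$ and $v(\bom{\eta}_L)=1/n$ combined with $v(\bom{y}_i),v(\bom{z})\geq 0$, and the inverse property holds by construction. The main obstacle will be the bookkeeping of $q$-th power compatible systems through the successive root extractions (both the $q^i(q-1)$-th roots for the ratios and the $(q^n-1)(q-1)/q$-th root for $\bom{X}_n$): one must make consistent choices of $q$-th power roots at every stage so that the resulting formulas assemble into a single well-defined morphism of affinoid algebras rather than merely a set-theoretic bijection.
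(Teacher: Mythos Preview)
Your proposal is correct and follows essentially the same approach as the paper: construct the inverse by first solving \eqref{Ydef} for each ratio $\bom{x}_i/\bom{x}_{i+1}$ via root extraction, then using the relation $\bom{Z}=\bom{\eta}_L\bom{z}$ coming from \eqref{f0def} and \eqref{Zdef} to solve a single equation for $\bom{x}_n$. The paper's only cosmetic difference is that it first takes the $q$-th power of $\bom{Z}$, which produces the integer exponent $(q-1)(q^n-1)$ on $\bom{x}_n$ rather than your fractional exponent $(q^n-1)(q-1)/q$ on $\bom{X}_n$, making the final root extraction slightly tidier.
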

\begin{proof}
We will construct the inverse morphism of $\Theta$. 
By solving \eqref{Ydef} iteratively, 
we can write $\bom{x}_i /\bom{x}_{i+1}$ as 
a $q$-th power compatible system of analytic functions on 
$\cB (\bom{y}, \bom{z})$. 
By \eqref{f0def}, \eqref{Zdef} and $\bom{\eta}_L =\bom{\xi}_{L,1}^{q-1}$, 
we have 
\begin{align}
 \bom{Z}^q &= 
 \bom{\eta}_L^{q-1} \Biggl( \sum_{i=1}^{n-1} 
 \Biggl( \biggl( \frac{\bom{x}_i}{\bom{x}_{i+1}}\biggr)^{q^i(q-1)} 
 -\bom{1} \Biggr) + 
 \biggl( \frac{\bom{x}_n^{q^n}}{\bom{x}_1}\biggr)^{q-1} 
 -\bom{1} \Biggr) \label{eq:Z^q} \\ 
 &=\bom{\eta}_L^{q-1} \Biggl( \sum_{i=1}^{n-1} \bom{Y}\!_i 
 +\bom{x}_{n}^{(q-1)(q^n-1)} \prod_{i=1}^{n-1} 
 (\bom{1}+\bom{Y}\!_i )^{-q^{-i}}  -\bom{1} \Biggr), \notag 
\end{align}
where we use \eqref{Ydef} at the last equality. 
Solving the above equation, 
we can write $\bom{x}_n$ as 
a $q$-th power compatible system of analytic functions on 
$\cB(\bom{y}, \bom{z})$ with integral coefficients. 
Hence, we have the inverse morphism of $\Theta$. 
\end{proof}
We put 
\[
 \delta_B (y_1, \ldots ,y_{n-1}, z) = 
 (\delta|_{\cB_L'} )\circ \Theta^{-1}
\]
equipped with its $q^j$-th root 
$\delta_B^{q^{-j}}$ for $j \geq 0$. 
We put 
\[
 \fX^L =\Spf \cO_{\bfC} \langle y_1^{1/q^{\infty}}, \ldots ,
 y_{n-1}^{1/q^{\infty}}, z^{1/q^{\infty}} \rangle
 /(\delta_B^{q^{-j}})_{j \geq 0}. 
\]

\begin{thm}\label{redmod}
The formal scheme $\fX^L$ is 
a formal model of $\cX^L$, 
and the special fiber of $\fX^L$ is isomorphic to 
the perfection of the affine smooth variety defined by 
\[
 z^q -z =\sum_{1 \leq i \leq j \leq n-1} y_i y_j \quad 
 \textrm{in} \ \bA_{k^{\rmac}}^n. 
\]
\end{thm}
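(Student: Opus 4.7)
The proof has two parts: (I) that $\fX^L$ is a formal model of $\cX^L$, and (II) the computation of the special fibre.

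For (I), the key point is that $\cX^L$ lies in $\cB'^L$, i.e., $v(\bom{Z}) \geq 1/n$ on $\cX^L$. This follows from Lemma~\ref{fZapp}, which gives $\bom{Z} \equiv f(\bom{X}) - f(\bom{\xi}_L) \mod\!_>\, 1/n$, combined with the Lubin--Tate equation $\delta(\bom{X}) = \bom{t}$ (so that $f(\bom{X}) - f(\bom{\xi}_L) = \bom{t}\bigl(h(\bom{\xi}_L)^{-1} - h(\bom{X})^{-1}\bigr)$) and the valuation conditions \eqref{affc} together with the factorisation $h(\bom{X}) = h(\bom{\xi}_L)\prod\bom{x}_i^{q^{i-1}}$. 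The isomorphism $\Theta$ of Lemma~\ref{BBisom} then identifies $\cX^L$ with the zero locus of the pullback $\delta_B$ of $\delta(\bom{X}) - \bom{t}$, exhibiting $\fX^L$ as a formal model.

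For (II), I would combine the identity
\[
 \bom{u}^{(q-1)^2} = \prod_{j=1}^{n-1}(1+\bom{Y}_j)^{1+q^{-j}}\Bigl(1 + \bom{\eta}_L\bom{z}^q - \sum_{i=1}^{n-1}\bom{Y}_i\Bigr),
\]
valid on $\cB'^L$ (from the proof of Lemma~\ref{BBisom}, with $\bom{u} = \prod\bom{x}_i^{q^{i-1}} = h(\bom{X})/h(\bom{\xi}_L)$), with the Lubin--Tate constraint $\bom{u}(1 - f(\bom{X})) = 1 - f(\bom{\xi}_L)$. The latter, via Lemma~\ref{fZapp}, gives $\bom{u} - 1 \equiv \bom{Z} = \bom{\eta}_L\bom{z} \mod\!_>\, 1/n$ on $\cX^L$. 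The crucial simplification is the Frobenius identity $(1+\bom{Y}_j)^{q^{-j}} = 1 + \bom{Y}_j^{q^{-j}}$ for $q$-th power compatible systems (equivalently, in the tilt, which has characteristic $p$); this rewrites $(1+\bom{Y}_j)^{1+q^{-j}}$ as $(1+\bom{Y}_j)(1+\bom{Y}_j^{q^{-j}})$, so that when the product is expanded and the substitutions $\bom{Y}_j = \bom{\eta}_L^{1/2}\bom{y}_j$, $\bom{Z} = \bom{\eta}_L\bom{z}$ are made, the linear contribution $\sum_j\bom{Y}_j$ cancels against $-\sum_i\bom{Y}_i$, and after using the Lubin--Tate constraint the residual leading part of the defining equation takes the form $\bom{\eta}_L\bigl(z^q - z - Q(\bom{y})\bigr)$ modulo valuation strictly greater than $1/n$, where $Q$ is a quadratic form in $y_1,\ldots,y_{n-1}$ over $\bF_p^{\rmac}$. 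Dividing by $\bom{\eta}_L$ and reducing modulo $\fm_\bfC$ yields the Artin--Schreier equation $z^q - z = Q(y)$ in $k^{\rmac}[y,z]$, whose perfection is the special fibre.

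The last step is to bring $Q$ to the form $\sum_{1 \leq i \leq j \leq n-1} y_iy_j$. The assumption $\gcd(n,p)=1$ forces $Q$ to be non-degenerate, and over the perfect field $k^{\rmac}$ this determines $Q$ up to linear change of variables (standard for odd residue characteristic; in characteristic $2$ one verifies directly that the associated bilinear form of $Q$ is non-alternating). \textbf{The main obstacle} is the leading-order computation in part (II): the naive $\cO_\bfC$ binomial expansion produces apparent linear terms in $y$ of valuation $1/(2n)$, whereas the target Artin--Schreier terms are of valuation $1/n$; the cancellations permitting the equation to reorganise properly depend crucially on the perfectoid/tilt Frobenius identities, and tracking these carefully is the substantive part of the proof.
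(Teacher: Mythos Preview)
Your argument for part (I) has a genuine gap. You want $v(\bom Z)\geq 1/n$ on $\cX^L$, and you reduce this via Lemma~\ref{fZapp} to $v\bigl(f(\bom X)-f(\bom\xi_L)\bigr)\geq 1/n$, which on $\cX^L$ equals $v(\bom u-\bom 1)$ with $\bom u=\prod_i\bom x_i^{q^{i-1}}$. But the affinoid inequalities \eqref{affc} alone do \emph{not} force $v(\bom u-\bom 1)\geq 1/n$: in the tilt one has $\bom x_i^{q^{i-1}}-\bom 1=(\bom x_i-\bom 1)^{q^{i-1}}$, and the conditions \eqref{affc} only yield $v(\bom x_i-\bom 1)\geq 1/(2nq^{n-1})$, so the $i=1$ factor contributes $v(\bom x_1-\bom 1)\geq 1/(2nq^{n-1})$, far below $1/n$. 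Your use of the Lubin--Tate equation is only to rewrite $f(\bom X)-f(\bom\xi_L)$ in terms of $\bom u$; it does not feed back any extra constraint on $\bom u$. The paper avoids this circularity by first rewriting $\delta(\bom X)=\delta(\bom\xi_L)$ on $\cB^L$ as the single equation
\[
\bom Z^q=\bom\eta_L^{\,q-1}\Bigl(\textstyle\sum_{i\leq j}\bom Y_i\bom Y_j+\bom Z+R(\bom X)\Bigr),\qquad v(R(\bom X))>1/n,
\]
from which $v(\bom Z)\geq 1/n$ is immediate (if $v(\bom Z)<1/n$ then the $\bom Z$-term dominates the bracket and one gets $(q-1)v(\bom Z)=(q-1)/n$, contradiction). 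That derivation---passing through \eqref{Fdef}, \eqref{XhhY}, \eqref{XZFY}---is the actual work you are skipping.

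For part (II), your route via the identity from the proof of Lemma~\ref{BBisom} can be made to work once (I) is known, but two remarks. First, the identity you quote is slightly off: combining \eqref{XhhY} with the displayed equation in that proof gives $\bom u^{(q-1)^2}=\bigl(1+\bom\eta_L\bom z^q-\sum_i\bom Y_i\bigr)\prod_j(1+\bom Y_j)$, with exponent $1$ rather than $1+q^{-j}$. Second, expanding this product to order $1/n$ and using $\bom u^{(q-1)^2}-\bom 1\equiv \bom u-\bom 1\equiv\bom\eta_L\bom z$ (from Lemma~\ref{fZapp} on $\cB'^L$) already gives the \emph{specific} quadratic form $\sum_{i\leq j}y_iy_j$; no classification-of-quadratic-forms step is needed, and since you have not computed $Q$ you cannot check the non-degeneracy hypothesis that step would require anyway. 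The paper's approach is more economical: the equation \eqref{ZqYZ} derived for part (I) is exactly what, after substituting $\bom Y_i=\bom\eta_L^{1/2}\bom y_i$, $\bom Z=\bom\eta_L\bom z$ and dividing by $\bom\eta_L^{\,q}$, reduces modulo $\fm_{\bfC}$ to the Artin--Schreier equation.
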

\begin{proof}
Let $(\bom{X}\!_1,\ldots,\bom{X}\!_n)$ be the coordinate of $\cB^{L}$. 
By Lemma \ref{fZapp}, we have 
\begin{equation}\label{vfZ}
 v(f(\bom{X})) \geq \frac{q-1}{nq} \quad \textrm{and} 
 \quad 
 v(\bom{Z}) > \frac{q-1}{nq}. 
\end{equation}
We put 
\begin{equation}\label{Fdef}
 F(\bom{X})= 
 \frac{\bom{1}-f(\bom{\xi}_L)}{\bom{1}-f(\bom{X})} 
 - (\bom{1}+\bom{Z}). 
\end{equation}
Then we have $v(F(\bom{X})) >1/n$ by 
Lemma \ref{fZapp} and \eqref{vfZ}. 
We have 
\begin{equation}\label{hXX}
 h(\bom{X})^{q-1}= 
 \biggl( \frac{\bom{X}\!_n^{\,q^n}}{\bom{X}\!_1} \biggr)
 \prod_{i=1}^{n-1} \biggl( 
 \frac{\bom{X}\!_i}{\bom{X}\!_{i+1}} \biggr)^{q^i} . 
\end{equation}
We have 
$h(\bom{\xi}_L)^{(q-1)^2} = \bom{\eta}_L^{n(q-1)}$ 
by Lemma \ref{exxi}.(iii) and \eqref{hXX}. Hence, we have 
\begin{equation}\label{XhhY}
 \biggl( \frac{\bom{X}\!_n^{\,q^n}}{\bom{X}\!_1} \biggr)^{q-1}
 = 
 \bom{\eta}_L^{q-1} 
 \biggl( \frac{h(\bom{X})}{h(\bom{\xi}_L)} 
 \biggr)^{(q-1)^2} 
 \prod_{i=1}^{n-1} (\bom{1}+\bom{Y}\!_i)^{-1} 
\end{equation}
by \eqref{hXX}. 
Then $\delta (\bom{X}) =\delta (\bom{\xi}_L)$ is equivalent to 
\begin{equation}\label{XZFY}
 \biggl( \frac{\bom{X}\!_n^{\,q^n}}{\bom{X}\!_1} \biggr)^{q-1}
 = 
 \bom{\eta}_L^{q-1} 
 \bigl( \bom{1} +\bom{Z} +F(\bom{X} )  
 \bigr)^{(q-1)^2} 
 \prod_{i=1}^{n-1} (\bom{1}+\bom{Y}\!_i)^{-1}
\end{equation}
by \eqref{fdef}, \eqref{Fdef} and \eqref{XhhY}. 
We put 
\[
 R(\bom{X})= 
 \sum_{i=1}^{n-1} \bom{Y}\!_i + 
 \bigl( \bom{1} +\bom{Z} +F(\bom{X} )  
 \bigr)^{(q-1)^2} 
 \prod_{i=1}^{n-1} (\bom{1}+\bom{Y}\!_i)^{-1} 
 -\bom{1} 
 -\sum_{1 \leq i \leq j \leq n-1}\bom{Y}\!_i \bom{Y}\!_j 
 -\bom{Z}.
\]
Then we have 
\begin{align*}
 R(\bom{X}) &\equiv 
 \sum_{i=1}^{n-1} \bom{Y}\!_i + 
 \bigl( \bom{1} +\bom{Z} 
 \bigr) 
 \prod_{i=1}^{n-1} (\bom{1}-\bom{Y}\!_i +\bom{Y}\!_i^{\,2} ) 
 -\bom{1} 
 -\sum_{1 \leq i \leq j \leq n-1}\bom{Y}\!_i \bom{Y}\!_j 
 -\bom{Z} \equiv 0 \mod\!_>\, \frac{1}{n} 
\end{align*}
by 
\eqref{Ydef} and \eqref{vfZ} and $v(F(\bom{X})) >1/n$. 
Hence we have $v(R(\bom{X}))>1/n$. 
Equation 
\eqref{XZFY} is equivalent to 
\begin{equation}\label{XnYZ}
 f_0 (\bom{X})^q = 
 \bom{\eta}_L^{q-1} \Biggl( \bom{n} 
 +\sum_{1 \leq i \leq j \leq n-1}\bom{Y}\!_i \bom{Y}\!_j 
 +\bom{Z} +R (\bom{X}) \Biggr) , 
\end{equation}
where $\bom{n}$ is $n$ times sum of $\bom{1}$. 
Further, \eqref{XnYZ} is equivalent to 
\begin{equation}\label{ZqYZ}
 \bom{Z}^q = 
 \bom{\eta}_L^{q-1} \Biggl( 
 \sum_{1 \leq i \leq j \leq n-1}\bom{Y}\!_i \bom{Y}\!_j + \bom{Z} 
 + R (\bom{X}) \Biggr). 
\end{equation}
As a result, 
$\delta (\bom{X}) =\delta (\bom{\xi}_L)$ is equivalent to 
\eqref{ZqYZ} on $\cB^L$. 
By Lemma \ref{fZapp} and \eqref{ZqYZ}, 
we have $v(\bom{Z}) \geq 1/n$ on $\cX^L$. 
This implies $\cX^L \subset \cB'^L$. 
Then, we have the first claim by Lemma \ref{BBisom}, 
since the ring of integral elements for $\cX^L$ 
is isomorphic to the 
completion of the integral closure of 
\[
 \cO_{\bfC} \langle y_1^{1/q^{\infty}}, \ldots ,
 y_{n-1}^{1/q^{\infty}}, z^{1/q^{\infty}} \rangle
 /(\delta_B^{q^{-j}})_{j \geq 0}
\] 
under the isomorphism induced by $\Theta$. 
Substituting \eqref{yzdef} to \eqref{ZqYZ} 
and dividing it by $\bom{\eta}_L^{q}$, 
we obtain 
\begin{equation}\label{eqonB}
 \bom{z}^q -\bom{z} = 
 \sum_{1 \leq i \leq j \leq n-1} \bom{y}_i \bom{y}_j 
 +S(\bom{y}, \bom{z}), 
\end{equation}
where $S(\bom{y}, \bom{z})$ is the $q$-th power 
compatible system of analytic functions on 
$\cB(\bom{y}, \bom{z})$ 
that corresponds to $\bom{\eta}_L^{-1} R (\bom{X})$ 
under $\Theta$. 
We have $v(S(\bom{y}, \bom{z})) >0$. 
Hence, we have the second claim. 
\end{proof}

\begin{rem}\label{rem:compari}
Tokimoto generalizes the 
construction of affinoids to higher depth cases in 
\cite{TokLTs}. 
The condition \eqref{affc} is equivalent to 
the condition for $\cX_1$ in \cite{TokLTs} 
as mentioned in the introduction of \cite{TokLTs}: 
The condition for $\cX_1$ in \cite{TokLTs} is 
\begin{equation}\label{eq:X1cond}
 v \biggl( \frac{X_i}{\xi_{L,i}} -1 
 \biggr)  \geq \frac{1}{2n q^{i-1}} 
 \quad 
 \textrm{for} \ 1 \leq i \leq n \quad 
 \textrm{and} \quad 
 v \Biggl( \sum_{i=1}^n (X_i - \xi_{L,i})^{q^{i-1}} 
 \Biggr) \geq \frac{q}{n(q-1)} 
\end{equation}
in our notations. 
The condition \eqref{eq:X1cond} implies \eqref{affc} clearly. 
Assume the condition \eqref{affc}. 
Then we have $v(\bom{Z}) \geq 1/n$ as in the proof of Theorem \ref{redmod}. 
By \eqref{affc}, we have 
\[
 v \Biggl( \biggl( \frac{\bom{x}_i}{\bom{x}_{i+1}} \biggr)^{q^i(q-1)} 
 -\bom{1} \Biggr) \geq \frac{1}{2n} 
 \quad \textrm{for} \quad 1 \leq i \leq n-1 \quad 
 \textrm{and} \quad  
 v ( \bom{x}_n^{q^n} -\bom{1} ) \geq \frac{q}{2n}. 
\]
By this, \eqref{eq:Z^q} and $v(\bom{Z}) \geq 1/n$, 
we have 
\[
 v \biggl( \frac{X_1}{\xi_{L,1}} -1 
 \biggr)  \geq \frac{1}{2n} . 
\]
We can see that this implies \eqref{eq:X1cond} 
using \eqref{affc}, \eqref{eq:Z^q} and $v(\bom{Z}) \geq 1/n$ again. 
\end{rem}

\section{Group action on the reductions}\label{GpX}
\subsection{Preliminary}
Let $B$ be the subring consisting of 
upper triangular matrices of $\textit{M}_n(k)$. 
Let $\mathfrak{I} \subset 
\textit{M}_n(\mathcal{O}_K)$ be 
the inverse image of $B$
under the reduction map
$\textit{M}_n(\mathcal{O}_K) 
\to \textit{M}_n(k)$. 

\begin{lem}\label{vgd}
Let $(g,d,1) \in G^0$. 
We take the integer $l$ such that 
$d \varphi_{L,D}^{-l} \in \cO_D^{\times}$. 
Let $(\bom{X}\!_1 ,\ldots ,\bom{X}\!_n )$ be the coordinate 
of $\cX^L$. 
Assume $v((g,d) \cdot \bom{X}\!_i) =v(\bom{X}\!_i)$ 
for $1 \leq i \leq n$ at some point of $\cX^L$. 
Then we have 
$(g,d ) \in (\varphi_{M,L},\varphi_{D,L})^l (\fI^{\times} \times \cO_D^{\times} )$. 
\end{lem}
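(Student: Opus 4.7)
The plan is to first twist away the $\varphi_L^l$ part, then analyze the unit part by computing valuations term-by-term and exploiting the incommensurability of the valuations of $\bom{X}_j$ for different $j$.

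First I would show that the element $(\varphi_{M,L},\varphi_{D,L},1) \in G^0$ preserves the valuations $v(\bom{X}_i) = q^{1-i}/(n(q-1))$ on $\cX^L$. By a direct computation from \eqref{gl} and \eqref{divi}, using the formulas defining $\varphi_{M,L}$ and the relation $[\varphi_{D,L}]_{\cG_0}(X)=X^q$, one gets $\varphi_{M,L}^{\ast}\bom{X}_i=\bom{X}_{i-1}$ for $i\ge 2$ and $\varphi_{M,L}^{\ast}\bom{X}_1=\bom{X}_n^{q^n}$, whereas $\varphi_{D,L}^{\ast}\bom{X}_i=\bom{X}_i^{q^{-1}}$. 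Composing and checking valuations at any $\xi \in \cX^L$ (where $v(\bom{X}_i)=v(\bom{\xi}_{L,i})$ by Lemma~\ref{exxi}.(iv)) shows that the valuations are preserved. Since $\det(\varphi_{M,L}) = \Nrd(\varphi_{D,L}) = (-1)^{n-1}\varpi_L$ we also have $\gamma(\varphi_{M,L},\varphi_{D,L}) = 1$, so the action preserves $\cM^{(0)}_{\infty,\bar{\eta},1}$. Iterating and replacing $(g,d)$ by $(\varphi_{M,L},\varphi_{D,L})^{-l}(g,d)$, I reduce to the case $l=0$, where $d \in \cO_D^{\times}$; note that $\varphi_{D,L}^{-l} d\in \cO_D^{\times}$ since conjugation by $\varphi_{D,L}$ preserves $\cO_D^{\times}$.

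Next, for $d\in\cO_D^{\times}$ written as $d^{-1}=\sum_{k\ge 0}d_k\varphi^k$ with $d_0\in\mu_{q^n-1}(K_n)$, the formula \eqref{divi} gives $d^{\ast}\bom{X}_i=d_0\bom{X}_i+\sum_{k\ge 1}d_k\bom{X}_i^{q^k}$, whose leading term has the same valuation as $\bom{X}_i$; applying $g^{\ast}$ then yields $(g,d)^{\ast}\bom{X}_i = d_0\, g^{\ast}\bom{X}_i + \sum_{k\ge 1} d_k (g^{\ast}\bom{X}_i)^{q^k}$, and only the $k=0$ term contributes to the valuation. So I am reduced to analyzing $v(g^{\ast}\bom{X}_i)=v(\bom{X}_i)$ for $g=(a_{j,i})\in \iGL_n(K)$ at some point of $\cX^L$, and I must conclude $g\in \fI^{\times}$.

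Writing $g^{\ast}\bom{X}_i=\sum_{j=1}^n [a_{j,i}]\cdot \bom{X}_j$, the expansion $a_{j,i}=\sum_{k\ge v_K(a_{j,i})} a_{j,i,k}\varpi_L^k$ gives $[a_{j,i}]\cdot \bom{X}_j=\sum_k a_{j,i,k}\bom{X}_j^{q^{kn}}$, whose leading valuation at a point of $\cX^L$ is $q^{v_K(a_{j,i})n+1-j}/(n(q-1))$. The key observation is that the integers $v_K(a_{j,i})n-j$ for $j=1,\ldots,n$ (with $a_{j,i}\neq 0$) have pairwise distinct residues modulo $n$, so no two of the terms $[a_{j,i}]\cdot \bom{X}_j$ share their leading valuation; hence no cancellation occurs, and
\[
 v(g^{\ast}\bom{X}_i)=\min_{j,\ a_{j,i}\neq 0}\frac{q^{v_K(a_{j,i})n+1-j}}{n(q-1)}.
\]
Setting this equal to $q^{1-i}/(n(q-1))$ is equivalent to $\min_j(v_K(a_{j,i})n-j)=-i$. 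The inequality $v_K(a_{j,i})n-j\ge -i$ forces $v_K(a_{j,i})\ge \lceil (j-i)/n\rceil$, which gives $a_{j,i}\in\cO_K$ for $j\le i$ and $a_{j,i}\in\fp_K$ for $j>i$; the equality part requires $v_K(a_{j,i})n=j-i$ for some $j$, and since $|j-i|<n$ this forces $j=i$ and $a_{i,i}\in\cO_K^{\times}$. Together with $(g,d,1)\in G^0$ (whence $\det g \in \cO_K^{\times}$), these conditions are exactly $g\in \fI^{\times}$.

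The main obstacle I anticipate is the no-cancellation argument: one must verify rigorously that, at points of $\cX^L$, the valuations of the $q$-th power compatible systems $[a_{j,i}]\cdot\bom{X}_j$ are genuinely distinct (modulo $n$ in the $q$-exponent), and that the cross-terms arising from applying $g^{\ast}$ to the higher-order $d^{\ast}$ contributions (and vice versa) cannot conspire to lower the valuation beyond the leading term. Once that arithmetic check is in place, the rest is an elementary book-keeping of Newton-polygon type information read off from the formal $\cO_K$-module structure.
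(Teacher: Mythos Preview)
Your proposal is correct and follows essentially the same argument as the paper: compute $v((g,d)\cdot\bom{X}_i)$ as a minimum over $j$, use that the exponents $v(a_{j,i})n-j$ are pairwise distinct modulo $n$ to rule out cancellation, and then read off the valuation conditions on the entries $a_{j,i}$. The only difference is that you first reduce to $l=0$ by twisting with $(\varphi_{M,L},\varphi_{D,L})^{-l}$, whereas the paper keeps $l$ in the formula and writes directly $\min_{j}\bigl\{ q^{\,n v(a_{j,i})}/(n q^{\,j-l-1}(q-1))\bigr\}=1/(n q^{\,i-1}(q-1))$; this is a cosmetic rearrangement, and your no-cancellation worry is already settled by the mod-$n$ observation you made.
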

\begin{proof}
Write 
$g=(a_{i,j})_{1 \leq i,j \leq n} \in \iGL_n (K)$. 
By the condition $v((g,d) \cdot \bom{X}\!_i) =v(\bom{X}\!_i)$ 
and the definition of the action of $(g,d,1)$, 
we have 
\begin{equation}\label{vagd}
 \min_{1 \leq j \leq n} 
 \left\{\frac{q^{nv(a_{j,i})}}{nq^{j+l-1}(q-1)}\right\} 
 =\frac{1}{nq^{i-1}(q-1)}
\end{equation}
for $1 \leq i \leq n$. 
For $1 \leq i,j \leq n$, 
the equality $nv(a_{j,i})=j-i+l$ 
implies $j \equiv i-l \pmod{n}$
and 
$v(a_{j,i})=(j-i+l)/n$. 
Hence, we have the claim
by \eqref{vagd}. 
\end{proof}

\subsection{$\mathit{GL}_n$-action}
We put 
\[
 \iGL_n (K)^0 =\{ g \in \iGL_n (K) \mid v(\det (g))=0 \}. 
\] 
Let $\mathfrak{P}$ be the Jacobson radical of 
the order $\mathfrak{I}$. 
We put 
\[
 U_{\mathfrak{I}}^1=1+\mathfrak{P}^1 
 \quad \textrm{and} \quad 
 U_{\mathfrak{I}}^{1, \det =1}=\{g \in U_{\mathfrak{I}}^1 \mid \det (g)=1\}. 
\] 

\begin{prop}\label{ga}
We set 
$r_L (g) =\overline{\mathrm{tr}\ (\varphi_{M,L}^{-1}(g-1))}$ 
for $g \in U_{\mathfrak{I}}^1$. 
Then, $U_{\mathfrak{I}}^{1, \det =1}$ 
is the stabilizer in $\iGL_n (K)^0$ of 
$\mathcal{X}^L$, and 
$g \in U_{\mathfrak{I}}^{1, \det =1}$ 
induces the automorphism  
\[
 \overline{\mathfrak{X}^L} \to 
 \overline{\mathfrak{X}^L} ;\ 
 (\bom{z},(\bom{y}_i)_{1 \leq i \leq n-1}) \mapsto
 (\bom{z}+r_L(g) ,(\bom{y}_i)_{1 \leq i \leq n-1}). 
\]
\end{prop}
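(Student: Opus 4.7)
The proof goes in three steps: identify the stabilizer, compute the action on each $\bom{X}_i$, then translate into the action on the reduction coordinates $\bom{y}_i, \bom{z}$.

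\textbf{Stabilizer.} Since $\cX^L \subset \cM^{(0)}_{\infty,\ol{\eta},1}$ and $(g,1,1) \in G^0$ sends $A^{\alpha}$ to $A^{\det(g)^{-1}\alpha}$, any stabilizer satisfies $\det(g) = 1$. Lemma \ref{vgd} with $d = 1$, $l = 0$ forces $g \in \fI^{\times}$. A leading-order analysis of the action \eqref{gl} at the boundary of \eqref{affc} forces $\ol{a_{i,i}} = 1 \in k$ for each diagonal entry, hence $g \in U_{\fI}^1$. Combined with $\det(g) = 1$, this gives the inclusion of the stabilizer in $U_{\fI}^{1,\det=1}$; the reverse inclusion follows from the computation below.

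\textbf{Action on $\bom{X}_i$.} For $g = 1 + h$ with $h = (b_{i,j}) \in \fP$, expand \eqref{gl} using $[a]\cdot\bom{X}_i = \sum_j a_j \bom{X}_i^{q^{jn}}$: modulo higher valuation,
\[
g^{\ast}\bom{X}_i \equiv \bom{X}_i + \sum_{j<i}b_{j,i}\bom{X}_j \quad (i \geq 2), \qquad g^{\ast}\bom{X}_1 \equiv \bom{X}_1 + (\varpi_L^{-1}b_{n,1})\bom{X}_n^{q^n},
\]
the extra term in the second formula coming from the leading coefficient of $[b_{n,1}]\cdot\bom{X}_n$ (since $b_{n,1} \in \fp$). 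Setting $u_i = (g^{\ast}\bom{X}_i - \bom{X}_i)/\bom{X}_i$ and using $\bom{X}_i \approx \bom{\xi}_{L,i}$ on $\cX^L$ together with Lemma \ref{exxi}(iii), which gives the relations $\bom{\xi}_{L,i}^{q^{i-1}(q-1)} = \bom{\eta}_L$ and $\bom{\xi}_{L,n}^{q^n}/\bom{\xi}_{L,1} = \bom{\eta}_L$, we find
\[
u_1 \equiv (\varpi_L^{-1}b_{n,1})\bom{\eta}_L \quad (\text{valuation } 1/n), \qquad u_i \equiv b_{i-1,i}\bom{\xi}_{L,i}^{q-1} \quad (\text{valuation } 1/(nq^{i-1})) \text{ for } i \geq 2.
\]

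\textbf{Reduction coordinates.} For the $\bom{y}_i$, the estimate $(1+w)^{q^i(q-1)} - 1 \equiv (q-1)w^{q^i}$ modulo $p$ applied with $w \approx -u_{i+1}$ shows that $g^{\ast}\bom{Y}_i - \bom{Y}_i$ has valuation strictly greater than $v(\bom{\eta}_L^{1/2}) = 1/(2n)$, so $g^{\ast}\bom{y}_i \equiv \bom{y}_i$ on $\ol{\fX^L}$. The crucial identity is that $f_0(g^{\ast}\bom{X}) - f_0(\bom{X})$ has leading term $r_L(g)\bom{\eta}_L$ with remainder of valuation $>1/n$, yielding $g^{\ast}\bom{z} \equiv \bom{z} + r_L(g)$ after dividing by $\bom{\eta}_L$. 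Writing $f_0 = \sum_{i=1}^n g_i$ per \eqref{f0def}: for $1 \leq i \leq n-1$, one computes $g^{\ast}g_i - g_i \equiv -(q-1)g_i u_{i+1}^{q^{i-1}} \equiv \ol{b_{i,i+1}}\bom{\eta}_L$, using $g_i \approx \bom{\eta}_L^{(q-1)/q}$, $u_{i+1}^{q^{i-1}} \approx b_{i,i+1}^{q^{i-1}}\bom{\eta}_L^{1/q}$, and $-(q-1) \equiv 1 \pmod p$; for $g_n = (\bom{X}_n^{q^n}/\bom{X}_1)^{(q-1)/q}$, only $u_1$ contributes, and the characteristic-$p$ identity $(1 - u_1)^{(q-1)/q} \equiv 1 + u_1^{1/q}$ modulo higher gives $g^{\ast}g_n - g_n \equiv \ol{\varpi_L^{-1}b_{n,1}}\bom{\eta}_L$. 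Summing and matching against the direct computation $\tr(\varphi_{M,L}^{-1}h) = \varpi_L^{-1}b_{n,1} + \sum_{i=1}^{n-1}b_{i,i+1}$ establishes the identity. The main obstacle is exactly this valuation bookkeeping: isolating precisely the superdiagonal entries $b_{i,i+1}$ and the corner entry $b_{n,1}$ (with its $\varpi_L^{-1}$-scaling) as contributing at the critical level $v(\bom{\eta}_L) = 1/n$, while correctly handling the fractional exponent $(q-1)/q$ on $g_n$ via the $q$-th roots available in the perfectoid setting.
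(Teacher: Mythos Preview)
Your proof is correct and follows essentially the same approach as the paper. Both arguments first use $\det(g)=1$ together with Lemma~\ref{vgd} to land in $\fI^{\times}$, then compute the leading-order action on each $\bom{X}_i$ (the paper's \eqref{x0} is exactly your approximation for $g^{\ast}\bom{X}_i$, written with Teichm\"uller coefficients $a_{i,j}^{(l)}$ instead of the entries $b_{i,j}$ of $h=g-1$), and finally identify the change in $\bom{Z}=f_0(\bom{X})-f_0(\bom{\xi}_L)$ modulo $>1/n$ with $r_L(g)\,\bom{\eta}_L$.

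The only organizational difference is in this last step. The paper packages the leading contribution into the single expression
\[
\Delta_g(\bom{X})=\sum_{i=1}^{n-1}a_{i,i+1}^{(0)}(\bom{X}_i/\bom{X}_{i+1})^{q^{i}}+a_{n,1}^{(1)}(\bom{X}_n^{q^n}/\bom{X}_1)
\]
and then evaluates $\bom{\eta}_L^{-1}\Delta_g(\bom{X})$ on the reduction, while you expand $f_0=\sum g_i$ and compute each $g^{\ast}g_i-g_i$ separately. These are the same computation: your term $g_i\cdot u_{i+1}^{q^{i-1}}$ simplifies to $a_{i,i+1}^{(0)}(\bom{X}_i/\bom{X}_{i+1})^{q^i}$ once you use $u_{i+1}\approx a_{i,i+1}^{(0)}(\bom{X}_i/\bom{X}_{i+1})$ and combine exponents, and your $g_n\cdot u_1^{1/q}$ collapses to $a_{n,1}^{(1)}(\bom{X}_n^{q^n}/\bom{X}_1)$. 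Your write-up is more explicit about the fractional exponent $(q-1)/q$ on $g_n$ and the characteristic-$p$ identities governing the compatible-system arithmetic, which the paper leaves implicit; conversely, the paper's $\Delta_g$ formulation makes the match with $\tr(\varphi_{M,L}^{-1}(g-1))$ immediate. Two very minor points: in your $\bom{y}_i$ estimate you should also note the $u_i^{q^i}$ contribution (it has even larger valuation, so this is harmless), and your intermediate coefficient ``$-(q-1)$'' is really the reduction $+1$ coming from $(1+w)^{q-1}\equiv 1-w$ in the tilt, not from a na\"ive binomial expansion---but since you immediately note $-(q-1)\equiv 1\pmod p$, the conclusion is unaffected.
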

\begin{proof}
Assume that 
$g=(a_{i,j})_{1 \leq i,j \leq n} \in \iGL_n (K)^0$ 
stabilizes $\mathcal{X}^L$. 
Since $g$ stabilizes $\mathcal{X}^L$, 
the action of $g$ sends a point of 
$\mathcal{M}_{\infty,\bar{\eta}}(\bfC)$ 
to a point of $\mathcal{M}_{\infty,\bar{\eta}}(\bfC)$ 
in the decomposition \eqref{Mdec}. 
Hence we have $\det (g)=1$ by the definition of the action of $g$. 
By Lemma \ref{vgd}, we have $g \in \fI^{\times}$. 

We write 
$a_{i,j}=
\sum_{l=0}^{\infty}a_{i,j}^{(l)}\varpi^l$ with 
$a_{i,j}^{(l)} \in \mu_{q-1} (K) \cup\{0\}$.
By \eqref{gl} and Lemma \ref{appsum}, we have 
\begin{equation}\label{x0}
\begin{gathered}
\begin{split}
 & g^{\ast} \bom{X}\!_1 \equiv 
 a^{(0)}_{1,1} \bom{X}\!_1+a^{(1)}_{n,1} \bom{X}\!_n^{\,q^n} 
 \mod\!_>\, q/(n(q-1)),\\  
 & g^{\ast} \bom{X}\!_i \equiv 
 a^{(0)}_{i,i} \bom{X}\!_i+a_{i-1,i}^{(0)} \bom{X}\!_{i-1} 
 \mod\!_>\, (nq^{i-2}(q-1))^{-1} \quad 
 \textrm{for $2 \leq i \leq n$}.
\end{split}
\end{gathered}
\end{equation}
The former assertion follows from this and \eqref{affc}. 

We prove the latter assertion. 
Assume that $g \in U_{\mathfrak{I}}^{1,\det=1}$. 
Note that $a^{(0)}_{i,i}=1$ for $1 \leq i \leq n$ in this case. 
By \eqref{eq:v(Xi)} and \eqref{x0}, we have 
\begin{equation}\label{eq:gii+1}
 \biggr( \frac{g^{\ast} \bom{X}\!_i}{g^{\ast} \bom{X}\!_{i+1}} 
 \biggr)^{q^{i-1}(q-1)} 
 \equiv 
 \biggr( \frac{\bom{X}\!_i}{\bom{X}\!_{i+1}} 
 \biggr)^{q^{i-1}(q-1)} 
 + a_{i,i+1}^{(0)}
 \biggr( \frac{\bom{X}\!_i}{\bom{X}\!_{i+1}} \biggl)^{q^{i}} 
 \mod\!_{>}\, \frac{1}{n} 
\end{equation}
for $1 \leq i \leq n-1$ and 
\begin{equation}\label{eq:gn1}
 \biggl( \frac{g^{\ast} \bom{X}\!_n^{\,q^n}}{g^{\ast} \bom{X}\!_1} 
 \biggr)^{\frac{q-1}{q}} \equiv 
 \biggl( \frac{\bom{X}\!_n^{\,q^n}}{\bom{X}\!_1} 
 \biggr)^{\frac{q-1}{q}} + a_{n,1}^{(1)}
 \biggr( \frac{\bom{X}\!_n^{\,q^{n}}}{\bom{X}\!_1} \biggl) 
 \mod\!_{>}\, \frac{1}{n}. 
\end{equation}
We set 
\[
 \Delta_g (\bom{X})= 
 \sum_{i=1}^{n-1} a_{i,i+1}^{(0)}
 \biggr( \frac{\bom{X}\!_i}{\bom{X}\!_{i+1}} \biggl)^{q^{i}} 
 +a_{n,1}^{(1)} \biggr( \frac{\bom{X}\!_n^{\,q^{n}}}{\bom{X}\!_1} \biggl). 
\]
Then, by \eqref{Zdef}, \eqref{eq:gii+1} and \eqref{eq:gn1}, we have 
\[
  g^\ast \bom{Z} \equiv \bom{Z}+\Delta_g (\bom{X}) \mod\!_{>}\, \frac{1}{n} . 
\]
Hence, we obtain 
\begin{equation}\label{kou}
 g^\ast \bom{z} \equiv \bom{z}+\eta_L^{-1} \Delta_g (\bom{X}) \mod\!_{>}\, 0 
\end{equation}
by \eqref{yzdef}. 
We have 
\begin{equation*}
 \biggr( \frac{\bom{X}\!_i}{\bom{X}\!_{i+1}} \biggl)^{q^{i}} 
 \equiv 
 \biggr( \frac{\bom{\xi}_{L,i}}{\bom{\xi}_{L,i+1}} \biggl)^{q^{i}} 
 = \bom{\xi}_{L,1}^{q-1} = 
 \bom{\eta}_L \mod\!_>\, \frac{1}{n} 
\end{equation*}
for $1 \leq i \leq n-1$ and 
\begin{equation*}
 \frac{\bom{X}\!_n^{\,q^n}}{\bom{X}\!_1} 
 \equiv 
 \frac{\bom{\xi}_{L,n}^{q^n}}{\bom{\xi}_{L,1}} 
 = \bom{\xi}_{L,1}^{q-1} = 
 \bom{\eta}_L \mod\!_>\, \frac{1}{n} 
\end{equation*}
by \eqref{eq:v(Xi)}. 
Hence, we have 
$\overline{\bom{\eta}_L^{-1} \Delta_g (\bom{X})}=r_L (g)$. 
By this and \eqref{kou}, we obtain 
$g^\ast \bom{z}=\bom{z}+r_L (g)$.
We can easily compute 
the action of $g$ on $\{\bom{y}_i\}_{1 \leq i \leq n-1}$ 
by \eqref{affc} and \eqref{x0}. 
\end{proof}

\subsection{$\mathcal{O}^{\times}_D$-action}
Let $\mathfrak{p}_D$ be the maximal ideal of 
$\mathcal{O}_D$. 
We put 
\[
 U_D ^1 =1+\mathfrak{p}_D \quad \textrm{and} \quad 
 U_D^{1,\Nrd=1}=\{d \in U_D^1 \mid \Nrd_{D/K}(d)=1\}. 
\]
\begin{prop}\label{da}
We set 
$r_L (d)=\overline{\mathrm{Trd}_{D/K}((-\varphi_{D,L})^{-1}(d-1))}$ 
for $d \in U_D^1$.
Then, $U_D^{1,\Nrd=1}$ 
is the stabilizer in $\cO_D^{\times}$ of 
$\mathcal{X}^L$, and $d \in U_D^{1,\Nrd=1}$
induces the automorphism 
\[
 \overline{\mathfrak{X}^L} \to 
 \overline{\mathfrak{X}^L} ;\ 
 (\bom{z},(\bom{y}_i)_{1 \leq i \leq n-1})
 \mapsto 
 (\bom{z}+r_L (d) ,(\bom{y}_i)_{1 \leq i \leq n-1}). 
\]
\end{prop}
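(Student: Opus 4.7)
The plan is to mirror the proof of Proposition \ref{ga}, with the $D^{\times}$-action \eqref{divi} playing the role of the $\iGL_n(K)$-action \eqref{gl}. For $d \in \cO_D^{\times}$, write $d^{-1} = \sum_{j \geq 0} d_j \varphi_{D,L}^j$ with $d_j \in \mu_{q^n -1}(K_n) \cup \{0\}$ and $d_0 \neq 0$; then
\[
 d^{\ast} \bom{X}_i = d_0 \bom{X}_i + d_1 \bom{X}_i^q + d_2 \bom{X}_i^{q^2} + \cdots .
\]

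First I would show that the stabilizer of $\cX^L$ in $\cO_D^{\times}$ equals $U_D^{1,\Nrd=1}$. The condition $\Nrd_{D/K}(d) = 1$ is forced because the action of $(1,d,1) \in G^0$ carries $A^{\alpha}$ to $A^{\Nrd_{D/K}(d) \alpha}$ and $\cX^L$ lies in $A^1$. Inspecting the leading terms of $d^{\ast} \bom{X}_i$ modulo suitable valuations against \eqref{affc}---in close analogy to the derivation of $g \in U_{\fI}^{1,\det=1}$ from \eqref{x0} in the proof of Proposition \ref{ga}---then forces $d_0 = 1$, so $d \in U_D^1$.

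Next, for $d \in U_D^{1,\Nrd=1}$, writing $d^{-1} = 1 + \sum_{j \geq 1} d_j \varphi_{D,L}^j$, I would propagate the first-order perturbation $d^{\ast} \bom{X}_i - \bom{X}_i \equiv d_1 \bom{X}_i^q$ through $f_0$. Following the template of \eqref{kou}, this should yield
\[
 d^{\ast} \bom{Z} \equiv \bom{Z} + \Delta_d(\bom{X}) \mod\!_{>}\, 1/n, \qquad
 d^{\ast} \bom{z} \equiv \bom{z} + \bom{\eta}_L^{-1} \Delta_d(\bom{X}) \mod\!_{>}\, 0,
\]
for an explicit $\Delta_d(\bom{X})$ involving $d_1$ and the monomials $(\bom{X}_i/\bom{X}_{i+1})^{q^i}$ and $\bom{X}_n^{q^n}/\bom{X}_1$, playing the role of $\Delta_g(\bom{X})$. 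A parallel but shorter check on $(\bom{x}_i/\bom{x}_{i+1})^{q^i(q-1)}$ verifies $d^{\ast} \bom{y}_i = \bom{y}_i$ on $\ol{\fX^L}$.

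The hard part will be identifying $\overline{\bom{\eta}_L^{-1} \Delta_d(\bom{X})}$ with $r_L(d)$. Combining the congruence $\bom{X}_n^{q^n}/\bom{X}_1 \equiv \bom{\eta}_L \mod\!_{>}\, 1/n$ already used in the proof of Proposition \ref{ga} with the commutation relation $\varphi_{D,L}^{-1} \zeta = \zeta^{1/q} \varphi_{D,L}^{-1}$ for $\zeta \in \mu_{q^n -1}(K_n)$, the contribution of $d_1$ should collapse modulo $\fp_K$ to $\mathrm{Trd}_{D/K}((-\varphi_{D,L})^{-1}(d-1))$. The identity $\Tr_{K_n/K}(\zeta^{1/q}) = \Tr_{K_n/K}(\zeta)$ for $\zeta \in \mu_{q^n -1}(K_n)$, a consequence of Frobenius-invariance of the trace on $K_n/K$, is the algebraic step that cleanly packages the various Frobenius-twisted contributions into a single reduced trace.
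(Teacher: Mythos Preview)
Your approach mirrors the paper's. One correction: the shape of $\Delta_d$ you anticipate is off. Because the $D^{\times}$-perturbation $d^{\ast}\bom{X}_i \equiv \bom{X}_i(1 + d_1 \bom{X}_i^{q-1})$ is diagonal in $i$---unlike the index-shifting $\iGL_n$-perturbation in \eqref{x0}---propagating it through $f_0$ does \emph{not} reproduce the cross-ratio monomials $(\bom{X}_i/\bom{X}_{i+1})^{q^i}$ of $\Delta_g$. The paper instead obtains
\[
 \Delta_d(\bom{X}) = \sum_{i=1}^n \kappa(d)^{q^{i-1}}\, \bom{X}_i^{q^{i-1}(q-1)}, \qquad \kappa(d) = d_1/d_0,
\]
and then uses $\bom{X}_i^{q^{i-1}(q-1)}/\bom{\eta}_L \equiv 1 \mod\!_>\, 0$ on $\cX^L$ to reduce $\overline{\bom{\eta}_L^{-1}\Delta_d(\bom{X})}$ to $\sum_{i=1}^n \overline{d_1}^{\,q^{i-1}}$. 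The Frobenius twists thus appear already in $\Delta_d$ itself, and your final trace step (equivalently, the paper's observation $(-\varphi_{D,L})^{-1}(d-1) \equiv \kappa(d)^{q^{-1}} \pmod{\varphi_{D,L}\cO_D}$) then identifies this sum with $r_L(d)$. So your endgame is right; just redo the one computation of $\Delta_d$ with the correct monomials.
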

\begin{proof}
Let $d \in \mathcal{O}_D^{\times}$. We write $d^{-1}=\sum_{i=0}^{\infty}d_i \varphi_L^i$ with 
$d_i \in \mu_{q^n-1}(K_n) \cup \{0\}$. We set $\kappa(d)=d_1/d_0$. 
By \eqref{divi} and Lemma \ref{appsum}, we have 
\begin{equation}\label{xx2}
 d^\ast \bom{X}\!_i \equiv d_0 \bom{X}\!_i 
 \left(1+ \kappa(d) \bom{X}\!_i^{\,q-1} \right) 
 \mod\!_>\, (nq^{i-2}(q-1))^{-1} \quad 
 \textrm{for $1 \leq i \leq n$}.
\end{equation}
The former assertion follows from \eqref{affc} and \eqref{xx2}. 
By \eqref{eq:v(Xi)} and \eqref{xx2}, we have 
\begin{equation}\label{eq:dii+1}
 \biggr( \frac{d^{\ast} \bom{X}\!_i}{d^{\ast} \bom{X}\!_{i+1}} 
 \biggr)^{q^{i-1}(q-1)} 
 \equiv 
 \biggr( \frac{\bom{X}\!_i}{\bom{X}\!_{i+1}} 
 \biggr)^{q^{i-1}(q-1)} 
 + \kappa(d)^{q^{i-1}} \bom{X}\!_i^{\,q^{i-1}(q-1)} 
 \mod\!_{>}\, \frac{1}{n} 
\end{equation}
for $1 \leq i \leq n-1$ and 
\begin{equation}\label{eq:dn1}
 \biggl( \frac{d^{\ast} \bom{X}\!_n^{\,q^n}}{d^{\ast} \bom{X}\!_1} 
 \biggr)^{\frac{q-1}{q}} \equiv 
 \biggl( \frac{\bom{X}\!_n^{\,q^n}}{\bom{X}\!_1} 
 \biggr)^{\frac{q-1}{q}} + \kappa(d)^{q^{n-1}} \bom{X}\!_n^{\,q^{n-1}(q-1)} 
 \mod\!_{>}\, \frac{1}{n}. 
\end{equation}
We set 
\[
 \Delta_d (\bom{X})=\sum_{i=1}^n \kappa(d)^{q^{i-1}} \bom{X}\!_i^{\,q^{i-1}(q-1)}.
\]
Then we have 
\begin{equation}\label{df0}
 f_0 (d^\ast \bom{X}) \equiv f_0 (\bom{X}) +\Delta_d (\bom{X}) 
 \mod\!_{>}\, \frac{1}{n} 
\end{equation}
by \eqref{f0def}, \eqref{eq:dii+1} and \eqref{eq:dn1}. 
Assume that $d \in U_D^{1,\Nrd=1}$. 
By \eqref{Zdef}, \eqref{yzdef} and \eqref{df0}, 
we obtain
\begin{align}\label{ggdne}
 d^\ast \bom{Z}  \equiv \bom{Z} +\Delta_d (\bom{X}) 
 \mod\!_{>}\, \frac{1}{n}, \quad
 d^\ast \bom{z}  \equiv \bom{z}
 + \bom{\eta}_L^{-1} \Delta_d (\bom{X}) 
 \mod\!_{>}\, 0.
\end{align}
On $\mathcal{X}_L$, we can check that 
\begin{equation}\label{gfc}
 \bom{X}\!_i^{\,q^{i-1}(q-1)}/\bom{\eta}_L \equiv 1
 \mod\!_>\, 0 \quad 
 \textrm{for}\ 1 \leq i \leq n. 
\end{equation}
Note that $(-\varphi_{D,L})^{-1}(d-1) \equiv 
\kappa(d)^{q^{-1}} \mod \varphi \mathcal{O}_D$.  
Therefore, 
by \eqref{ggdne} and \eqref{gfc}, we acquire 
\begin{equation}\label{ggdn}
 d^\ast \bom{z}  \equiv \bom{z}
 + \Trd_{D/K}((-\varphi_{D,L})^{-1}(d-1)) 
 \mod\!_{>}\, 0.
\end{equation}
The action of $d$ on $\{ \bom{y}_i\}_{1 \leq i \leq n-1}$ 
can be easily computed by \eqref{affc}, 
\eqref{Ydef} and \eqref{xx2}.
\end{proof}

\subsection{Action of Weil group}\label{subsec:Weil}
Let $L'$ be the finite separable extension of $K$ 
corresponding to 
$\{ \sigma \in W_K \mid \sigma (L)=L \}$. 
We note that 
$[L:L']=n_q$, where $n_q$ is defined in \eqref{eq:defnq}. 
For $\sigma \in W_K$, 
the point 
$\xi_L^{(1,\varphi^{-n_{\sigma}},\sigma)}$ 
has CM by $L$ 
if and only if $\sigma \in W_{L'}$. 
We define 
$j_L \colon W_{L'} \to L^{\times} \backslash 
 (\iGL_n (K) \times D^{\times} )$ as follows (cf.~\cite[3.1]{BWMax}): 
\begin{quote}
Let $\sigma \in W_{L'}$. 
There exists $(g,d) \in \iGL_n (K) \times D^{\times}$ 
uniquely defined up to left multiplication by $L^{\times}$ 
such that 
$(g,d,\sigma) \in G^0$ and 
$\xi_L^{(g,d,\sigma)}=\xi_L$ by Lemma \ref{CMtrans}. 
We put $j_L (\sigma )=L^{\times} (g,d)$. 
\end{quote}
For $\sigma \in W_{L}$, we put 
$a_{\sigma}=\mathrm{Art}_{L}^{-1}(\sigma) \in L^{\times}$ 
and 
$u_{\sigma}=a_{\sigma} \varphi_{L}^{-n_{\sigma}} 
 \in \mathcal{O}_{L}^{\times}$. 
\begin{lem}\label{jLsig}
For $\sigma \in W_L$, we have 
$j_L (\sigma)=L^{\times} (1,a_{\sigma}^{-1})$. 
\end{lem}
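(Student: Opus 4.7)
The plan is to verify that $(1, a_\sigma^{-1}, \sigma) \in G$ lies in $G^0$ and stabilizes $\xi_L$; the uniqueness clause of Lemma \ref{CMtrans} then forces $j_L(\sigma) = L^{\times}(1, a_\sigma^{-1})$. Here $a_\sigma^{-1} \in L^{\times}$ is identified with its image in $D^{\times}$ via $i_{D,\xi_L}$.

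Membership in $G^0$ is immediate from local class field theory. Since $i_{D,\xi_L}$ embeds $L$ as a maximal subfield of $D$, the reduced norm restricts to the field norm, giving $\mathrm{Nrd}_{D/K}(a_\sigma^{-1}) = N_{L/K}(a_\sigma)^{-1}$. Functoriality of the Artin reciprocity map applied to $\sigma \in W_L \subset W_K$ gives $\mathrm{Art}_K^{-1}(\sigma) = N_{L/K}(\mathrm{Art}_L^{-1}(\sigma)) = N_{L/K}(a_\sigma)$. Multiplying, the defining character of $G^0$ vanishes on $(1, a_\sigma^{-1}, \sigma)$.

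For the stabilization, I will decompose in $G^0$:
\[
 (1, a_\sigma^{-1}, \sigma) = \bigl( i_{M,\xi_L}(a_\sigma), 1, \sigma \bigr) \cdot \bigl( i_{M,\xi_L}(a_\sigma^{-1}), i_{D,\xi_L}(a_\sigma^{-1}), 1 \bigr).
\]
The right factor equals $(i_{\xi_L}(a_\sigma^{-1}), 1)$, which stabilizes $\xi_L$ by Lemma \ref{CMtrans}. It then remains to show that $(i_{M,\xi_L}(a_\sigma), 1, \sigma)$ also stabilizes $\xi_L$. This is the essential content of the lemma and expresses the way the Galois action at the CM point is compensated by the $\mathit{GL}_n(K)$-action on the level structure. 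By Lemma \ref{exxi}, the level structure underlying $\xi_L$ is built from the $\varphi_L^\infty$-division system $\{t_{L,m}\}$ of the Lubin--Tate formal $\mathcal{O}_L$-module $\cG^L$. By Lubin--Tate theory for $\cG^L$ (with the paper's normalization that uniformizers map to geometric Frobenius under Artin), the action of $\sigma \in W_L$ on this system is multiplication by $a_\sigma^{-1}$ in the $L$-structure on $V_{\mathfrak{p}} \cG^L$. The commutative square defining $i_{M,\xi_L}$ then converts this target-action into precomposition of $\phi$ by $i_{M,\xi_L}(a_\sigma^{-1})$, which is undone by acting on the left by $i_{M,\xi_L}(a_\sigma)$. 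The Frobenius twist coming from $\sigma|_{\widehat{K}^{\mathrm{ur}}} = \mathrm{Frob}^{n_\sigma}$ on $\iota$ is simultaneously absorbed, because $v(N_{L/K}(a_\sigma)) = n_\sigma$ so that $a_\sigma$ accounts exactly for the uniformizer contribution.

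The hard part will be matching the sign and normalization conventions end-to-end: the Artin map here sends uniformizers to geometric (not arithmetic) Frobenius, and the actions \eqref{gl}--\eqref{divi} are written in terms of this choice. Extracting $a_\sigma^{-1}$ (rather than $a_\sigma$) on the level structure side, and then recognizing via \eqref{xilim} that the action of $i_{D,\xi_L}(a_\sigma^{-1})$ under \eqref{divi} cancels the residual Frobenius piece, is where care is required. Once the signs align, both factors of the decomposition stabilize $\xi_L$ and the statement follows.
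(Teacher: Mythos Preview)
The paper's own proof is a one-line citation to \cite[Lemma~3.1.3]{BWMax}, with the remark that the $W_K$-action here is inverse to that reference. Your proposal is a direct argument that essentially reproves the cited lemma, and the overall strategy---check $(1,a_\sigma^{-1},\sigma)\in G^0$ and that it fixes $\xi_L$---is correct.

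The execution, however, is incomplete rather than wrong. The $G^0$-membership check and the handling of the second factor $(i_{\xi_L}(a_\sigma^{-1}),1)$ via Lemma~\ref{CMtrans} are fine. For the first factor $(i_{M,\xi_L}(a_\sigma),1,\sigma)$, your final paragraph is a plan, not a proof: you write ``The hard part will be matching the sign and normalization conventions'' and ``Once the signs align\ldots'' without actually carrying this out. To finish you must state the Lubin--Tate reciprocity with the correct sign under the paper's geometric-Frobenius normalization of $\Art_L$, and then verify, using the explicit coordinates \eqref{xilim} and the action formulas \eqref{gl}--\eqref{divi}, that the $\iGL_n$-action by $i_{M,\xi_L}(a_\sigma)$ on the $\bom{\xi}_{L,i}$ exactly undoes the effect of $\sigma$. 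Your claim that the Frobenius twist on $\iota$ is ``simultaneously absorbed, because $v(N_{L/K}(a_\sigma)) = n_\sigma$'' also needs to be made precise: in the paper's action the $\varphi^{-n_\sigma}$ that compensates the Frobenius on $\iota$ sits in the $D^\times$-slot, which in your first factor is trivial, so the bookkeeping (via $i_{D,\xi_L}(a_\sigma^{-1}) = u_\sigma^{-1}\varphi_{D,L}^{-n_\sigma}$, pushed into the second factor) should be written out explicitly rather than asserted.
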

\begin{proof}
This is just \cite[Lemma 3.1.3]{BWMax}. Note that 
our action of $W_K$ is inverse to that in \cite{BWMax}. 
\end{proof}
\begin{prop}\label{LT}
Let $\sigma \in W_{L}$. 
We regard $L$ as a subalgebra of $D$ by $i_{D,\xi_L}$. 
Then, 
$(a_{\sigma}^{-1},\sigma) \in D^{\times} \times W_{L}$ 
stabilizes $\cX^L$, and 
induces 
\[
 \overline{\mathfrak{X}^L} \to \overline{\mathfrak{X}^L} ;\ 
 (\bom{z},(\bom{y}_i)_{1 \leq i \leq n-1})
 \mapsto 
 (\bom{z}^{q^{n_{\sigma}}}, 
 (\ol{u}_{\sigma}^{\frac{q-1}{2}} 
 \bom{y}_i^{q^{n_{\sigma}}})_{1 \leq i \leq n-1}) 
\]
on the coordinates of points. 
\end{prop}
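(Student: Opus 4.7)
The plan is to lift $(a_\sigma^{-1},\sigma)$ to $\tau := (1, a_\sigma^{-1}, \sigma) \in G$, verify $\tau \in G^0$, apply Lemma~\ref{jLsig} to conclude $\tau \cdot \xi_L = \xi_L$, and then compute the induced action on $\overline{\fX^L}$ by decomposing $\tau$ in $G^0$ as a pure Weil factor times a unit factor of $\cO_D^{\times}$. Local class field theory gives $\mathrm{Art}_K^{-1}(\sigma) = N_{L/K}(a_\sigma)$, and since $a_\sigma^{-1} \in L^{\times}$ is embedded in $D^{\times}$ via $i_{D,\xi_L}$ we have $\mathrm{Nrd}_{D/K}(a_\sigma^{-1}) = N_{L/K}(a_\sigma^{-1})$; the product therefore has valuation zero and $\tau \in G^0$. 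Lemma~\ref{jLsig} then yields $\tau\cdot\xi_L=\xi_L$, so $\tau$ acts on the coordinate germ around $\xi_L$.

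I factor
\[
 \tau = (1, u_\sigma^{-1}, 1) \cdot (1, \varphi_{D,L}^{-n_\sigma}, \sigma)
\]
in $G^0$. The right-hand factor acts by $\bom{X}_i \mapsto \bom{X}_i$ and by $\sigma$ on $\cO_{\bfC}$. For the left-hand factor, $u_\sigma^{-1} \in \cO_L^{\times} \subset \cO_D^{\times}$ has reduction $\ol{u}_\sigma^{-1}$ and $u_\sigma = \ol{u}_\sigma + O(\varphi_{D,L})$, so \eqref{divi} gives $(1,u_\sigma^{-1},1)^* \bom{X}_i \equiv \ol{u}_\sigma \bom{X}_i$ modulo strictly larger valuation on $\cX^L$. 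For the Galois action on $\bom{\xi}_{L,i}$, Lubin-Tate theory for $\cG^L$ gives $\sigma(t_{L,m}) = [u_\sigma^{-1}]_{\cG^L}(t_{L,m})$ (the $\varphi_L^{n_\sigma}$-part of $a_\sigma$ fixes the ramified torsion tower), and combining with the limit formula $\xi_{L,i}^{q^{-j}} = \lim_m t_{L,m}^{q^{m-i-j}}$ and the convergence $u_\sigma^{q^m} \to [\ol{u}_\sigma]$ in $\cO_{\bfC}$ yields
\[
 \sigma(\bom{\xi}_{L,i}) \equiv \ol{u}_\sigma^{-1} \bom{\xi}_{L,i} \mod\!_>\, v(\bom{\xi}_{L,i}).
\]
Composing, $\tau^* \bom{X}_i \equiv \ol{u}_\sigma \bom{X}_i$ and $\tau^* \bom{\xi}_{L,i} \equiv \ol{u}_\sigma^{-1} \bom{\xi}_{L,i}$, hence $\tau^* \bom{x}_i \equiv \ol{u}_\sigma^2 \bom{x}_i$ modulo higher order; in particular $\tau^*(\bom{x}_i/\bom{x}_{i+1}) \equiv \bom{x}_i/\bom{x}_{i+1}$, the defining inequalities \eqref{affc} are preserved, and $\tau$ stabilizes $\cX^L$.

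Substituting into \eqref{Ydef} and \eqref{yzdef} and using $\ol{u}_\sigma^{q-1}=1$ gives $\tau^* \bom{Y}_i \equiv \bom{Y}_i$ and $\tau^* \bom{Z} \equiv \bom{Z}$ on the reduction. Taking the natural square-root choice $\bom{\eta}_L^{1/2} = \bom{\xi}_{L,1}^{(q-1)/2}$ (for $p$ odd), one computes $\sigma(\bom{\eta}_L^{1/2}) \equiv \ol{u}_\sigma^{-(q-1)/2} \bom{\eta}_L^{1/2}$, producing $\tau^* \bom{y}_i \equiv \ol{u}_\sigma^{(q-1)/2} \bom{y}_i$ and $\tau^* \bom{z} \equiv \bom{z}$ on the reduction, interpreted as a $\sigma$-semilinear automorphism. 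Since $\sigma|_{k^{\mathrm{ac}}}$ is the $n_\sigma$-th power of geometric Frobenius, rewriting this semilinear automorphism as a $k^{\mathrm{ac}}$-scheme morphism of $\overline{\fX^L}$ via composition with the Frobenius twist produces precisely $\bom{z} \mapsto \bom{z}^{q^{n_\sigma}}$ and $\bom{y}_i \mapsto \ol{u}_\sigma^{(q-1)/2} \bom{y}_i^{q^{n_\sigma}}$, matching the statement. The main obstacle is carrying out this Frobenius-twist identification cleanly and pinning down the sign of $\ol{u}_\sigma^{(q-1)/2}$ arising from the fixed choice of $\bom{\eta}_L^{1/2}$, in consistency with the geometric-Frobenius normalization of $\mathrm{Art}_K$.
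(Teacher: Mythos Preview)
Your overall strategy matches the paper's: factor $(1,a_\sigma^{-1},\sigma)=(1,u_\sigma^{-1},1)(1,\varphi_{D,L}^{-n_\sigma},\sigma)$, use Lemma~\ref{jLsig} to see $\xi_L$ is fixed, then read off the action on the coordinates of $\ol{\fX^L}$. The semilinear-to-linear conversion you describe at the end is exactly the paper's pointwise formulation with $\sigma^{-1}$ appearing on values.

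There is, however, a genuine precision gap in your treatment of $\bom{Z}$. Your leading-order approximations $\tau^*\bom{X}_i\equiv\ol u_\sigma\bom{X}_i$ and $\sigma(\bom{\xi}_{L,i})\equiv\ol u_\sigma^{\pm1}\bom{\xi}_{L,i}$ hold only $\bmod\!_>\,v(\bom{X}_i)$. Since $\bom{Z}=f_0(\bom{X})-f_0(\bom{\xi}_L)$ is a difference of terms each of valuation $(q-1)/(nq)<1/n$, substituting these approximations into $f_0$ and using $\ol u_\sigma^{q-1}=1$ yields $\tau^*\bom{Z}\equiv\bom{Z}$ only $\bmod\,\ge 1/n$, not $\bmod\!_>\,1/n$ as needed for the reduction of $\bom{z}=\bom{\eta}_L^{-1}\bom{Z}$. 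The missed next-order term is precisely $\Delta_{u_\sigma^{-1}}(\bom X)$ from \eqref{df0}, of valuation exactly $1/n$. What saves the day is that the fixed-point property gives not just the leading order but the \emph{exact} relation $\sigma(\bom{\xi}_{L,i})=((u_\sigma^{-1})^*\bom{X}_i)\big|_{\bom X\to\bom{\xi}_L}$; then \eqref{df0} applied at $\bom{\xi}_L$ produces a matching $\Delta_{u_\sigma^{-1}}(\bom{\xi}_L)$, and on $\cX^L$ one has $\Delta_{u_\sigma^{-1}}(\bom X)\equiv\Delta_{u_\sigma^{-1}}(\bom{\xi}_L)\bmod\!_>\,1/n$ because $\bom X_i/\bom{\xi}_{L,i}\equiv1\bmod\!_>\,0$. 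This cancellation is the content of the paper's \eqref{ZPsig}--\eqref{Delxi}, and your sketch omits it.

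A minor remark: with the paper's geometric normalization, the fixed-point relation gives $\sigma(\bom{\xi}_{L,i})\equiv\ol u_\sigma\,\bom{\xi}_{L,i}$ (not $\ol u_\sigma^{-1}$) to leading order, so your $\tau^*\bom x_i\equiv\ol u_\sigma^2\bom x_i$ should read $\tau^*\bom x_i\equiv\bom x_i$. This is harmless for the ratios $\bom x_i/\bom x_{i+1}$ and for the final coefficient, since $\ol u_\sigma^{(q-1)/2}\in\{\pm1\}$ equals its own inverse.
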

\begin{proof}
Let $P \in \mathcal{X}^L(\mathbf{C})$. 
We have 
\begin{align}
 \bom{Z} (P(a_{\sigma}^{-1},\sigma)) &= 
 f_0 \bigl( \bom{X}(P(a_{\sigma}^{-1},\sigma)) \bigr) - 
 f_0 (\bom{\xi}_L ) \notag \\
 &= f_0 \bigl( \bom{X}(P(a_{\sigma}^{-1},\sigma)) \bigr) - 
 f_0 \bigl( \bom{X}(P(\varphi_L^{-n_{\sigma}},\sigma)) \bigr) + 
 \sigma^{-1} \bigl( f_0 ( \bom{X}(P) ) \bigr) - 
 f_0 (\bom{\xi}_L ) \notag \\ 
 &\equiv \Delta_{u_{\sigma}^{-1}} 
 \bigl( \bom{X}(P(\varphi_L^{-n_{\sigma}},\sigma)) \bigr) + 
 \sigma^{-1} \bigl( \bom{Z}(P) +f_0 (\bom{\xi}_L ) \bigr) - 
 f_0 (\bom{\xi}_L ) \mod\!_{>}\, 1/n \label{ZPsig}
\end{align}
by \eqref{Zdef}, \eqref{df0} and the definition of the action of 
$(\varphi_L^{-n_{\sigma}},\sigma)$ (\cf Subsection \ref{CMpts}). 
Since $\xi_L =\xi_L (\varphi_L^{-n_{\sigma}},\sigma) (u_{\sigma}^{-1},1)$ 
by Lemma \ref{jLsig}, 
we have 
\begin{equation}\label{Delxi}
 f_0 (\bom{\xi}_L ) - f_0 (\sigma^{-1} (\bom{\xi}_L )) 
 \equiv \Delta_{u_{\sigma}^{-1}} (\sigma^{-1} (\bom{\xi}_L )) 
 \equiv 
 \Delta_{u_{\sigma}^{-1}} 
 \bigl( \bom{X}(P(\varphi_L^{-n_{\sigma}},\sigma)) \bigr) 
 \mod\!_{>}\, 1/n
\end{equation}
using \eqref{df0} at the first equality and 
the definition of the action of 
$(\varphi_L^{-n_{\sigma}},\sigma)$ and 
$\bom{X}\!_i(P)/\bom{\xi}_{L,i} \equiv \bom{1} \mod\!_{>}\, 0$ 
for $1 \leq i \leq n$ 
at the second equality. 
Then we have
$\bom{Z} (P(a_{\sigma}^{-1},\sigma)) 
 \equiv \sigma^{-1} ( \bom{Z}(P) ) \mod\!_{>}\, 1/n$ 
by \eqref{ZPsig} and \eqref{Delxi}. 
This implies 
$\bom{z} (P(a_{\sigma}^{-1},\sigma)) 
 \equiv \sigma^{-1} ( \bom{z}(P) ) \mod\!_{>}\, 0$. 
By the same argument using \eqref{xx2}, we have 
$\bom{Y}\!_i(P (a_{\sigma}^{-1},\sigma) ) 
 \equiv \sigma^{-1}(\bom{Y}\!_i(P)) \mod\!_{>}\, 1/(2n)$ 
for $1 \leq i \leq n-1$. 
This implies 
\begin{equation*}
 \bom{y}_i(P (a_{\sigma}^{-1},\sigma) ) \equiv 
 \sigma^{-1}(\bom{\eta}_L^{1/2}) 
 \bom{\eta}_L^{-(1/2)}\sigma^{-1}(\bom{y}_i(P))
 \equiv u_{\sigma}^{(q-1)/2} \bom{y}_i(P)^{q^{n_{\sigma}}} \mod\!_{>}\, 0
\end{equation*}
for $1 \leq i \leq n-1$. 
\end{proof}

\subsection{Action of $\mathbf{g}_L$}
We put  
\begin{equation}\label{g_z}
\mathbf{g}_L=
( \varphi_{M,L},\varphi_{D,L},1 )
 \in G. 
\end{equation}
\begin{prop}\label{frob}
{\rm 1.} 
The action of $\bfg_L$ 
stabilizes $\cX^L$, and 
induces the automorphism of 
$\overline{\mathfrak{X}^L}$ defined by 
\begin{equation}\label{oct}
 \bom{z} \mapsto \bom{z}, \quad 
 \bom{y}_1 \mapsto -\sum_{i=1}^{n-1} \bom{y}_i,\quad 
 \bom{y}_i \mapsto \bom{y}_{i-1} \quad 
 \textrm{for $2 \leq i \leq n-1$}. 
\end{equation}
{\rm 2.} Let $g_L \in 
\mathit{GL}_{n-1}(k)$ 
be the matrix corresponding to the action of 
$\bfg_L$ in \eqref{oct}.
Then, 
the characteristic 
polynomial of $g_L$ equals
$(T^n-1)/(T-1)$.
\end{prop}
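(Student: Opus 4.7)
My plan for part (1) is to compute explicitly the pullback action of $\bfg_L = (\varphi_{M,L}, \varphi_{D,L}, 1)$ on the coordinates $\bom{X}_i$ and then propagate it through the successive substitutions $\bom{X}_i \leadsto \bom{x}_i \leadsto (\bom{Y}_i, \bom{Z}) \leadsto (\bom{y}_i, \bom{z})$ used in Theorem \ref{redmod}. Applying \eqref{gl} to $\varphi_{M,L}$, whose only nonzero entries are $a_{i,i+1}=1$ and $a_{n,1}=\varpi_L$, gives $\varphi_{M,L}^{\ast} \bom{X}_1 = [\varpi_L]\cdot\bom{X}_n = \bom{X}_n^{q^n}$ and $\varphi_{M,L}^{\ast} \bom{X}_i = \bom{X}_{i-1}$ for $i \geq 2$; applying \eqref{divi} to $\varphi_{D,L}$, whose inverse expands as $1 \cdot \varphi^{-1}$, gives $\varphi_{D,L}^{\ast} \bom{X}_i = \bom{X}_i^{q^{-1}}$. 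Composing and using $\bom{\xi}_{L,i-1}=\bom{\xi}_{L,i}^q$ from Lemma \ref{exxi}(iii) to cancel the scaling factors, I obtain $\bfg_L^{\ast}\bom{x}_1 = \bom{x}_n^{q^{n-1}}$ and $\bfg_L^{\ast}\bom{x}_i = \bom{x}_{i-1}^{q^{-1}}$ for $i \geq 2$; in particular $\bfg_L$ fixes $\xi_L$, consistent with Lemma \ref{CMtrans}.

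A direct substitution then shows that $\bfg_L^{\ast}$ cyclically permutes the $n$ monomial summands of $f_0(\bom{X})$ in \eqref{f0def}: the summand for $2 \leq i \leq n-1$ pulls back to the $(i-1)$-th one, the $i=1$ summand pulls back to the $(\bom{X}_n^{q^n}/\bom{X}_1)^{(q-1)/q}$ term, and the latter pulls back to the $i=n-1$ summand. Hence $\bfg_L^{\ast} f_0(\bom{X}) = f_0(\bom{X})$, so $\bfg_L^{\ast} \bom{Z} = \bom{Z}$ by \eqref{Zdef}, giving $\bfg_L^{\ast} \bom{z} = \bom{z}$. The same direct calculation using \eqref{Ydef} gives $\bfg_L^{\ast} \bom{Y}_i = \bom{Y}_{i-1}$ exactly for $2 \leq i \leq n-1$, hence $\bfg_L^{\ast} y_i = y_{i-1}$ in the reduction.

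The delicate case is $i=1$. Here $\bfg_L^{\ast}(\bom{x}_1/\bom{x}_2)^{q(q-1)} = (\bom{x}_n^{q^n}/\bom{x}_1)^{q-1}$, and this must be related back to the $\bom{Y}_i$'s using the defining relation \eqref{XZFY} from the proof of Theorem \ref{redmod}. Using $\bom{\eta}_L = \bom{\xi}_{L,n}^{q^{n-1}(q-1)}$ to rewrite that equation in terms of $\bom{x}_i$, one obtains
\[
 (\bom{x}_n^{q^n}/\bom{x}_1)^{q-1} = \bigl(\bom{1}+\bom{Z}+F(\bom{X})\bigr)^{(q-1)^2}\prod_{i=1}^{n-1}(\bom{1}+\bom{Y}_i)^{-1}
\]
on $\cX^L$. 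Since $v(\bom{Z}), v(F) > 1/(2n)$, the first factor reduces to $\bom{1}$ and the product contributes $\bom{1}-\sum_i\bom{Y}_i$ modulo terms of valuation $>1/(2n)$; dividing by $\bom{\eta}_L^{1/2}$ and passing to the special fiber yields $\bfg_L^{\ast} y_1 = -\sum_{i=1}^{n-1}y_i$. The stabilization of $\cX^L$ then follows from these explicit formulas, since $\bfg_L^{\ast}\bom{z}$ and $\bfg_L^{\ast}\bom{y}_i$ remain bounded analytic functions on $\cX^L$ by Lemma \ref{BBisom}.

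For part (2), let $g_L \in \iGL_{n-1}(k)$ be the matrix with first row $(-1,\ldots,-1)$ and $(g_L)_{i,i-1}=1$ for $2 \leq i \leq n-1$, other entries zero. Cofactor expansion of $\det(TI-g_L)$ along the first column reduces the computation to a smaller determinant $D_{n-2}$ satisfying the recurrence $D_m = T^{m-1}+D_{m-1}$ with $D_1=1$, solving to $D_{n-2} = T^{n-3}+\cdots+T+1$; assembling gives $\det(TI-g_L) = (T+1)T^{n-2} + D_{n-2} = T^{n-1}+\cdots+1 = (T^n-1)/(T-1)$. The main obstacle in the whole proof is the case $i=1$ of the $\bom{Y}_i$ action: one must invoke the defining relation of $\cX^L$ to replace $(\bom{x}_n^{q^n}/\bom{x}_1)^{q-1}$ by an expression involving $\prod(\bom{1}+\bom{Y}_i)^{-1}$, and carefully isolate the linear-in-$\bom{Y}_i$ contribution modulo valuation $>1/(2n)$ to extract the sign and sum that distinguish $\bfg_L$ from a pure cyclic shift.
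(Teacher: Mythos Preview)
Your proof is correct and follows essentially the same approach as the paper. The paper's own proof is extremely terse: it only records the formulas $\bfg_L^{\ast} X_1 = X_n^{q^{n-1}}$ and $\bfg_L^{\ast} X_i = X_{i-1}^{q^{-1}}$ and then declares that both assertions follow. You have carried out in full the propagation through $\bom{x}_i \to (\bom{Y}_i,\bom{Z}) \to (\bom{y}_i,\bom{z})$ that the paper leaves implicit, and you correctly isolate the one genuinely nontrivial step, namely the $i=1$ case, where the relation \eqref{XZFY} on $\cX^L$ is needed to convert $(\bom{x}_n^{q^n}/\bom{x}_1)^{q-1}$ into $\prod_i(\bom{1}+\bom{Y}_i)^{-1}$ modulo higher-valuation terms. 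Your cofactor computation for part~2 is a valid direct check; an alternative one-line argument is to observe that $g_L$ is conjugate to the restriction of the cyclic shift on $k^n$ to the hyperplane $\sum y_i = 0$, which immediately gives the characteristic polynomial $(T^n-1)/(T-1)$.

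One minor remark on logical order: your final sentence deduces stabilisation of $\cX^L$ from boundedness of $\bfg_L^{\ast}\bom{y}_i$ and $\bfg_L^{\ast}\bom{z}$, but the computation of $\bfg_L^{\ast}\bom{y}_1$ already used the relation \eqref{XZFY}, which holds only on $\cX^L$. The cleaner order is to first check that $\bfg_L$ preserves the defining inequalities \eqref{affc} of $\cB^L$ directly from the formulas $\bfg_L^{\ast}\bom{x}_i$, note that $\bfg_L$ fixes $\xi_L$ and preserves the equation $\delta(\bom{X})=\bom{t}$ (so it stabilises $\cX^L$ inside $\cB^L$), and only then invoke \eqref{XZFY} to compute the induced map on the reduction. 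This is a small reorganisation; the content of your argument is unchanged.
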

\begin{proof}
By \eqref{gl} and \eqref{divi}, 
we have 
\begin{equation*} 
\bfg_L^{\ast} \bom{X}\!_1 = 
  \bom{X}\!_n^{\,q^{n-1}}, \quad 
 \bfg_L^{\ast} \bom{X}\!_i =
 \bom{X}\!_{i-1}^{\,q^{-1}} 
 \quad \textrm{for $2 \leq i \leq n$.}
\end{equation*}
The first assertion follows from this. 
The second assertion is directly checked. 
\end{proof}

\subsection{Stabilizer}\label{subsec:stab}
We put 
\[
 \cS_L = \{ (g,d,\sigma) \in \iGL_n (K) \times D^{\times} 
 \times W_{L'} \mid 
 j_L (\sigma )=L^{\times} (g,d) \}. 
\]
Then, this is the stabilizer of $\xi_L$ 
by the definition of $j_L$. 
\begin{lem}\label{SLact}
The action of $\cS_L$ on $\cM^{(0)}_{\infty,\ol{\eta}}$ stabilizes $\cX^L$, 
and induces the action on the formal model $\fX^L$, 
which gives the action on $\ol{\fX^L}$. 
\end{lem}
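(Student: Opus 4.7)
The plan is to show $\cS_L\subseteq\mathrm{Stab}_G(\cX^L)$ by decomposing each element of $\cS_L$ into factors already handled by the preceding propositions, and then to deduce the induced action on $\ol{\fX^L}$ from their explicit formulas. By Lemma \ref{CMtrans} together with the definition of $j_L$, the stabilizer fits in a short exact sequence
\[
 1\longrightarrow i_{\xi_L}(L^\times)\longrightarrow \cS_L\longrightarrow W_K\longrightarrow 1,
\]
so it suffices to verify stabilization of $\cX^L$ by (i) the kernel $i_{\xi_L}(L^\times)\times\{1\}$ and (ii) a set-theoretic lift of $W_K$ inside $\cS_L$.

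For (i), I would decompose $L^\times=\varphi_L^{\mathbb{Z}}\cdot\mu_{q-1}(K)\cdot U_L^1$, which is valid since $L/K$ is tamely totally ramified. The factor $i_{\xi_L}(\varphi_L)=\mathbf{g}_L$ (Lemma \ref{exxi}.(ii)) is handled by Proposition \ref{frob}; for $\zeta\in\mu_{q-1}(K)$, $i_{\xi_L}(\zeta)=(\zeta I_n,\zeta)$ yields $(\zeta,\zeta,1)$, which is trivial by Remark \ref{Ktri}. For $u\in U_L^1$, one has $\det(i_{M,\xi_L}(u))=\Nrd(i_{D,\xi_L}(u))=N_{L/K}(u)\in U_K^1$; since $\gcd(n,p)=1$, Hensel's lemma supplies $c\in U_K^1$ with $c^n=N_{L/K}(u)$, giving the factorization
\[
 (i_{\xi_L}(u),1)=(c,c,1)\cdot(c^{-1}i_{M,\xi_L}(u),1,1)\cdot(1,c^{-1}i_{D,\xi_L}(u),1),
\]
a product of a trivial factor (Remark \ref{Ktri}), an element of $U_{\mathfrak I}^{1,\det=1}$ stabilizing $\cX^L$ by Proposition \ref{ga}, and an element of $U_D^{1,\Nrd=1}$ stabilizing $\cX^L$ by Proposition \ref{da}.

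For (ii), when $\sigma\in W_L$ the lift $(1,a_\sigma^{-1},\sigma)$ provided by Lemma \ref{jLsig} stabilizes $\cX^L$ by Proposition \ref{LT}. For $\sigma\in W_K\setminus W_L$: since $L/K$ is totally ramified, $W_L$ contains a Frobenius lift of $W_K$, so the coset space $W_K/W_L$ is cyclic of order $n$ and is represented by inertia elements $\iota$ acting on $\varphi_L$ by a primitive $n$-th root of unity $\zeta\in\mu_n$. I would produce an explicit lift $(g_\iota,d_\iota,\iota)\in\cS_L$ by tracking the action of $\iota$ on the coordinates \eqref{xilim}; the correcting pair $(g_\iota,d_\iota)$ has block shape matching $i_{\xi_L}(\mu_n)$, and stabilization of $\cX^L$ by this lift is checked by a direct calculation in the spirit of Proposition \ref{LT}. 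This last step---constructing and analyzing a lift over $W_K\setminus W_L$---is the main obstacle. Once all cases are covered, the induced action on the reduction $\ol{\fX^L}$ is inherited factor-by-factor from the explicit formulas in Propositions \ref{ga}, \ref{da}, \ref{LT}, \ref{frob}.
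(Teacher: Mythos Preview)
Your decomposition handles $i_{\xi_L}(L^{\times})$ and the $W_L$-lifts correctly, but the case $\sigma\in W_K\setminus W_L$ is a genuine gap, and the sketch you give there does not work as stated. When $n\nmid q-1$ the extension $L/K$ is not Galois, so an inertia element $\iota$ sends $\varphi_L$ to $\zeta\varphi_L$ with $\zeta\in\mu_n(K^{\rmac})$ not in $K$; there is then no reason the correcting pair $(g_{\iota},d_{\iota})$ should have ``block shape matching $i_{\xi_L}(\mu_n)$'', and without an explicit form for $(g_{\iota},d_{\iota})$ a direct verification in the style of Proposition~\ref{LT} cannot be carried out. You are right to flag this as the main obstacle; as written the argument is incomplete precisely here.

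The paper avoids the case split altogether by arguing uniformly in $\sigma\in W_K$, and the key tool you are missing is Lemma~\ref{vgd}. Writing an element of $\cS_L$ as $(g,\,d\varphi_{D,L}^{-n_{\sigma}},\,\sigma)$ with $(g,d,1)\in G^0$, the fact that it fixes $\xi_L$ gives $\xi_L(g,d,1)=\xi_L(1,\varphi_{D,L}^{\,n_{\sigma}},\sigma^{-1})$; since the latter action only applies a Galois twist to the coordinates, the valuations $v((g,d)\cdot\bom{X}_i)$ agree with $v(\bom{X}_i)$ at the point $\xi_L\in\cX^L$, and Lemma~\ref{vgd} forces $(g,d)\in(\varphi_{M,L},\varphi_{D,L})^l(\fI^{\times}\times\cO_D^{\times})$. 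After peeling off $\bfg_L^{\,l}$ via Proposition~\ref{frob}.1, one is reduced to $(g,d)\in\fI^{\times}\times\cO_D^{\times}$. For such $(g,d)$ the action on each $\bom{X}_i$ has a well-controlled leading term (cf.\ \eqref{x0}, \eqref{xx2}), and since the full element fixes $\xi_L$, these leading terms are pinned down; the inequalities \eqref{affc}, which compare $\bom{X}_i$ to $\bom{\xi}_{L,i}$, are then preserved directly. No explicit lift over $W_K\setminus W_L$ ever needs to be produced.
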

\begin{proof}
We take an element of $\cS_L$, 
and write it as $(g,d \varphi_{D,L}^{-n_{\sigma}} ,\sigma )$, 
where $(g,d,1) \in G^0$ and $\sigma \in W_K$. 
Since 
$\xi_L (g,d ,1)=\xi_L (1,\varphi_{D,L}^{n_{\sigma}} ,\sigma^{-1} )$, 
we have 
$(g,d ) \in (\varphi_{M,L},\varphi_{D,L})^l (\fI^{\times} \times \cO_D^{\times} )$ 
by Lemma \ref{vgd}. 

To show the claims, 
we may assume that 
$(g,d ) \in \fI^{\times} \times \cO_D^{\times}$ 
by Proposition \ref{frob}.1. 
Then, the action of 
$(g,d \varphi_{D,L}^{-n_{\sigma}} ,\sigma )$ 
preserves the condition \eqref{affc} 
by the definition of the action in Subsection \ref{gpfm} 
and the fact that 
the action of $(g,d \varphi_{D,L}^{-n_{\sigma}} ,\sigma )$ 
fixes $\xi_L$. 
The both claims follow from this. 
\end{proof}

The conjugation by $\cS_L$ stabilizes 
$U_{\mathfrak{I}}^{1, \det =1} \times 1 \times 1 \subset G$ and 
$1 \times U_D^{1,\Nrd=1} \times 1 \subset G$, 
because 
$U_{\mathfrak{I}}^{1, \det =1}$ and $U_D^{1,\Nrd=1}$ 
are the stabilizers of $\cX^L$ in 
$\iGL_n (K)^0$ and $\cO_D^{\times}$ respectively. 
We put 
\[
 H_L =(U_{\mathfrak{I}}^{1, \det =1} \times U_D^{1,\Nrd=1} \times 1 ) 
 \cdot \cS_L \subset G. 
\]
Then $H_L$ acts on $\ol{\fX^L}$ by Proposition \ref{ga}, 
Proposition \ref{da} and Lemma \ref{SLact}. 

\begin{prop}
The subgroup $H_L \subset G^0$ is the stabilizer of 
$\cX^L$ in $\cM^{(0)}_{\infty,\ol{\eta}}$. 
\end{prop}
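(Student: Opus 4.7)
The statement has two inclusions. The forward one, $H_L \subseteq \mathrm{Stab}_{G^0}(\cX^L)$, follows at once by combining Lemma~\ref{SLact} (for $\cS_L$) with Propositions~\ref{ga} and~\ref{da} (for $U_{\fI}^{1,\det=1}$ and $U_D^{1,\Nrd=1}$): each of the three generating pieces of $H_L$ stabilizes $\cX^L$, so the product subgroup does too.

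For the reverse inclusion, let $\gamma = (g',d',\sigma) \in G^0$ stabilize $\cX^L$ and set $\eta := \xi_L \cdot \gamma \in \cX^L$. The $G$-action transports CM structures, so $\eta$ has CM by $\sigma^{-1}(L) \subseteq \bfC$. Since $\cX^L$ is a tubular affinoid neighborhood of $\xi_L$ and cannot contain a CM point for a distinct subfield of $\bfC$, we must have $\sigma(L) = L$, i.e.\ $\sigma \in W_L$. Multiplying $\gamma$ on the right by $(1,a_\sigma^{-1},\sigma)^{-1} \in \cS_L$ (Lemma~\ref{jLsig}) reduces to the case $\sigma = 1$.

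With $\sigma = 1$, Lemma~\ref{CMtrans} writes $\eta = \xi_L \cdot (h,e,1)$ for some $(h,e) \in (\iGL_n(K) \times D^\times)^0$, unique modulo $i_{\xi_L}(L^\times)$; then $\gamma \cdot (h,e,1)^{-1} \in \cS_L$, and the task reduces to showing $(h,e,1) \in H_L$. Lemma~\ref{vgd} applied to the condition $\xi_L \cdot (h,e,1) \in \cX^L$ gives $(h,e) \in (\varphi_{M,L},\varphi_{D,L})^l (\fI^\times \times \cO_D^\times)$, and after absorbing $\bfg_L^l \in \cS_L$ we may assume $(h,e) \in \fI^\times \times \cO_D^\times$. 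The explicit action formulas~\eqref{x0} and~\eqref{xx2}, combined with the defining inequalities~\eqref{affc}, force the residues of $h$ modulo $U_{\fI}^1$ and of $e$ modulo $U_D^1$ to be governed by a single scalar $\bar e_0 \in k^\times$ (the diagonal entries of $h$ all reduce to $\bar e_0$, and $e$ has leading coefficient $\bar e_0$). A final adjustment by $u \cdot i_{\xi_L}(v)^{-1}$ for suitable $u \in U_D^1$ and $v \in \cO_L^\times$—chosen so that $\Nrd(u)$ corrects $\Nrd(e)/\det(h) \in 1+\fp_K$ (possible since $\Nrd(U_D^1) = 1+\fp_K$) and $N_{L/K}(v) = \det(h)$ with $\bar v = \bar e_0$ (possible in the tame case, where $N_{L/K}(\cO_L^\times) = \mu_{q-1}^n \cdot (1+\fp_K)$ contains $\det(h)$ and $\ker(N_{L/K})$ surjects onto $\mu_n(k)$)—lands $(h,e,1)$ in $U_{\fI}^{1,\det=1} \times U_D^{1,\Nrd=1} \times 1 \subseteq H_L$.

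The main obstacles are (i) justifying $\sigma \in W_L$, which requires the geometric isolation of CM-by-$L$ points inside the affinoid $\cX^L$ (points with CM by a non-$K$-conjugate CM field cannot appear there), and (ii) the final adjustment, where one must simultaneously match the residue in $\fI^\times/U_{\fI}^1 \times \cO_D^\times/U_D^1$, enforce $\det=1$, and enforce $\Nrd=1$ through one choice of $v \in \cO_L^\times$ together with $u \in U_D^1$. Both steps rely on the tameness of $L/K$ in an essential way.
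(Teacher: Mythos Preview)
Your reduction to $\sigma=1$ contains a genuine gap. You argue that $\eta=\xi_L\cdot\gamma$ has CM by $\sigma^{-1}(L)$, that $\cX^L$ ``cannot contain a CM point for a distinct subfield of $\bfC$'', and hence that $\sigma(L)=L$, ``i.e.\ $\sigma\in W_L$''. There are two problems. First, the geometric isolation claim is asserted without proof; nothing in the paper establishes that $\cX^L$ excludes CM points for other totally ramified degree-$n$ subfields, and this would require a separate argument. Second, and more seriously, the implication $\sigma(L)=L\Rightarrow\sigma\in W_L$ is false: when $n\mid q-1$ the extension $L/K$ is Galois, so every $\sigma\in W_K$ satisfies $\sigma(L)=L$ setwise while $W_L$ has index $n$ in $W_K$. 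Even in the non-Galois case, the setwise stabilizer of $L$ in $W_K$ strictly contains $W_L$. So your argument does not yield $\sigma\in W_L$, and the subsequent appeal to Lemma~\ref{jLsig} (which is only stated for $\sigma\in W_L$) is not available.

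The paper sidesteps this entirely. By the definition of $j_L$, the map $\cS_L\to W_K$ is \emph{surjective}: for every $\sigma\in W_K$ there is some $(g_0,d_0,\sigma)\in\cS_L$. Since $\cS_L\subset H_L$ stabilizes $\cX^L$ (Lemma~\ref{SLact}), one may multiply any $(g,d,\sigma)$ in the stabilizer by $(g_0,d_0,\sigma)^{-1}$ and reduce immediately to $\sigma=1$, with no CM argument needed. After that, a further power of $\bfg_L\in\cS_L$ brings $(g,d)$ into $\iGL_n(K)^0\times\cO_D^\times$.

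Once $\sigma=1$, your argument is close to the paper's but more elaborate than necessary. The appeal to Lemma~\ref{CMtrans} is redundant: at that point $\gamma=(g,d,1)$ already has the desired form. For the final step, the paper simply observes from \eqref{x0}, \eqref{xx2}, and the preservation of \eqref{affc} that the common diagonal residue of $g$ and the leading coefficient of $d$ must agree with some $a\in\mu_{q-1}(K)$; combined with $\det(g)=\Nrd_{D/K}(d)$ (forced since $(g,d,1)$ preserves the component $\cM^{(0)}_{\infty,\bar\eta,1}$), this gives $(g,d)\in i_{\xi_L}(\cO_K^\times)\cdot(U_\fI^{1,\det=1}\times U_D^{1,\Nrd=1})$ directly. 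Your adjustment through $v\in\cO_L^\times$ also works, but the simpler $\cO_K^\times$ suffices.
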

\begin{proof}
Assume that 
$(g,d,\sigma) \in G^0$ stabilizes $\cX^L$. 
To show $(g,d,\sigma) \in H_L$, 
we may assume that 
$g \in \iGL_n (K)^0$, $d \in \cO_D^{\times}$ and $\sigma =1$ 
by Lemma \ref{SLact}. 
Then we have 
$(g,d ) \in \fI^{\times} \times \cO_D^{\times}$ 
by Lemma \ref{vgd}. 
We have also $\det (g)^{-1} \Nrd_{D/K}(d) =1$, since 
$(g,d,1)$ stabilizes $\cX^L$. 
Further, we see that 
$(g,d ) \in i_{\xi_L} (\cO_K^{\times}) \cdot 
 (U_{\mathfrak{I}}^{1, \det =1} \times U_D^{1,\Nrd=1})$ 
by \eqref{x0} and \eqref{xx2}. 
Hence, we have the claim. 
\end{proof}

\section{Cohomology of Artin-Schreier variety}\label{CohASv}
\subsection{Artin-Schreier variety}
We put $f =[k:\bF_p]$. 
Let $m$ 
be a positive integer dividing $f$. 
Let $r$ be a positive 
integer such that $r+1$ is prime to $p$. 
For $w=(m,r)$, let $X_{w,0}$ 
be the affine smooth variety of dimension $r$ defined by 
\[
 z^{p^m}-z=
 \sum_{1 \leq i \leq j \leq r} y_i y_j 
 \quad \textrm{in}\ \mathbb{A}_{k}^{r+1}. 
\]
We simply 
write $\nu_r$ for the right hand side of the above equation.
Let $X_w$ denote the base change 
$X_{w,0} \times_{k} k^{\mathrm{ac}}$.
We fix a prime number 
$\ell \neq p$. 
For a variety $Z$ over $k^{\mathrm{ac}}$ and 
an integer $i \geq 0$, 
we simply write $H_{\mathrm{c}}^i(Z)$ for 
$H_{\mathrm{c}}^i(Z,\overline{\mathbb{Q}}_{\ell})$. 
The finite group 
$\mathbb{F}_{p^m}$ acts on $X_{w,0}$ 
by $z \mapsto z+b$
for $b \in \mathbb{F}_{p^m}$. 
Let $\mathrm{Frob}_{q} \in \mathrm{Gal}(k^{\mathrm{ac}}/{k})$ 
be the geometric Frobenius element. 
For a finite abelian group $A$, 
the character group 
$\mathrm{Hom}_{\mathbb{Z}} (A,\overline{\mathbb{Q}}_{\ell} ^{\times})$ 
is denoted by $A^{\vee}$.

\subsection{Odd characteristic case}
We assume that $p$ is odd in this subsection.  
\begin{lem}\label{detnu}
We have 
$\det \nu_r =2^{-r}(r+1)$ in $k^{\times}/(k^{\times})^2$. 
In particular, $\nu_r$ is non-degenerate. 
\end{lem}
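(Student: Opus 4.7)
The plan is to compute the Gram matrix of the quadratic form $\nu_r$ explicitly and reduce the determinant to a well-known one-parameter eigenvalue calculation. Since $p$ is odd by assumption, the polarization of $\nu_r$ makes sense, and I would first write down the symmetric $r\times r$ matrix $M$ representing $\nu_r$: the coefficient of $y_i^2$ is $1$, so $M_{ii}=1$, while the coefficient of $y_i y_j$ for $i<j$ in $\nu_r$ equals $1$, which is $2M_{ij}$, giving $M_{ij}=1/2$. Thus $M=\tfrac{1}{2}(I_r+J_r)$, where $J_r$ denotes the $r\times r$ all-ones matrix.

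Next, I would compute $\det(I_r+J_r)$ by spectral considerations. The matrix $J_r$ has rank one, with eigenvalue $r$ on the line spanned by $(1,\ldots,1)$ and eigenvalue $0$ on the hyperplane $\{y:\sum_i y_i=0\}$, so $I_r+J_r$ has eigenvalue $r+1$ with multiplicity one and eigenvalue $1$ with multiplicity $r-1$. Hence
\[
 \det \nu_r = \det M = 2^{-r}\det(I_r+J_r) = 2^{-r}(r+1),
\]
which is the claimed formula in $k^\times/(k^\times)^2$ (indeed in $k$ itself).

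For the non-degeneracy, I would observe that $p$ is odd so $2\in k^\times$, while the hypothesis that $r+1$ is prime to $p$ ensures $r+1\in k^\times$; the product $2^{-r}(r+1)$ is therefore a unit and the form is non-degenerate. The computation is short and the only issue I anticipate is a notational one, namely keeping track of the factor of two in the symmetrization convention; once $M=\tfrac{1}{2}(I_r+J_r)$ is written down correctly, the rest is immediate and can be sanity-checked against the small cases $r=1$ (giving $1$) and $r=2$ (giving $3/4$).
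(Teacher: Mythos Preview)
Your argument is correct and is exactly the kind of direct computation the paper has in mind (the paper's proof reads simply ``This follows from a direct computation''). One small remark: the eigenvalue decomposition of $J_r$ as written is carried out implicitly over $\mathbb{Q}$, since when $p\mid r$ the vector $(1,\ldots,1)$ lies in the hyperplane $\sum y_i=0$ and $J_r$ is not diagonalizable over $k$; but the resulting identity $\det(I_r+J_r)=r+1$ is an integer identity (equivalently, the matrix determinant lemma $\det(I+uv^T)=1+v^Tu$ applied to $u=v=(1,\ldots,1)^T$), so it reduces correctly modulo $p$ in all cases.
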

\begin{proof}
This follows from a direct computation. 
\end{proof}

For an additive character 
$\psi \colon \mathbb{F}_{p^m} \to 
\overline{\mathbb{Q}}^{\times}_{\ell}$, 
let $\mathscr{L}_{\psi}(s)$ 
denote the Artin-Schreier 
$\overline{\mathbb{Q}}_{\ell}$-sheaf on 
$\mathbb{A}_{k}^1=\Spec k[s]$ associated to $\psi$, 
which is equal to 
$\mathfrak{F} (\psi)$ in the notation of 
\cite[Sommes trig. 1.8 (i)]{DelCoet}. 
We regard 
$\nu_r$ as a 
morphism 
$\nu_r \colon \mathbb{A}_{k}^r \to \mathbb{A}_{k}^1$.
We write $\mathscr{L}_{\psi}(\nu_r)$ for the pull-back
$\nu_r^\ast \mathscr{L}_{\psi}(s)$ to $\mathbb{A}_{k}^r$.
We consider the 
quadratic 
Gauss sum associated to 
$\nu_r$ and a non-trivial character 
$\psi$
\begin{equation}\label{qg}
\mathfrak{g}(\nu_r,\psi)=\sum_{x \in k^r}
\left(\psi \circ \mathrm{Tr}_{k/\mathbb{F}_{p^m}}\right)(\nu_r(x)).
\end{equation}
\begin{prop}\label{cg1}
{\rm 1.} We have a decomposition 
\begin{equation}\label{HXo}
 H_{\mathrm{c}}^r(X_w) \simeq 
 \bigoplus_{\psi \in \mathbb{F}_{p^m}^{\vee} \backslash \{1\}} \psi
\end{equation}
as ${\mathbb{F}_{p^m}}$-representations, 
and 
$H^{i}_{\mathrm{c}}(X_w)=0$ 
for $i \neq r,2r$. \\ 
{\rm 2.} 
Let $\psi \in \mathbb{F}_{p^m}^{\vee} \backslash \{1\}$. 
The element $\mathrm{Frob}_q$ acts on 
the $\psi$-component in \eqref{HXo} as 
multiplication by 
$(-1)^r \mathfrak{g}(\nu_r,\psi)$. \\ 
{\rm 3.} 
Let $g \in \mathit{GL}_r(k)$ be a matrix such that 
$\nu_r(gx)=\nu_r(x)$ for $x \in k^r$.
Then, the matrix $g$ acts on 
$H_{\mathrm{c}}^r(\mathbb{A}_{k^{\rmac}}^r,\mathscr{L}_{\psi}(\nu_r))$ 
as scalar multiplication 
by $\det (g) \in \{\pm1\}$. 
\end{prop}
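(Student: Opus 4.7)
The plan is to exploit the Artin-Schreier fibration $\pi \colon X_w \to \mathbb{A}^r_{k^{\rmac}}$, $(z,y) \mapsto y$, which is étale Galois with group $\mathbb{F}_{p^m}$. By standard Artin-Schreier theory,
\[
 \pi_! \overline{\mathbb{Q}}_\ell = \bigoplus_{\psi \in \mathbb{F}_{p^m}^\vee} \mathscr{L}_\psi(\nu_r),
\]
with the convention $\mathscr{L}_1 = \overline{\mathbb{Q}}_\ell$, yielding an $\mathbb{F}_{p^m}$-equivariant decomposition $H^i_{\mathrm{c}}(X_w) = \bigoplus_\psi H^i_{\mathrm{c}}(\mathbb{A}^r_{k^{\rmac}}, \mathscr{L}_\psi(\nu_r))$. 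The $\psi=1$ summand contributes $\overline{\mathbb{Q}}_\ell$ in degree $2r$ only, so Parts 1 and 2 both reduce to showing, for $\psi \neq 1$, that $H^*_{\mathrm{c}}(\mathbb{A}^r, \mathscr{L}_\psi(\nu_r))$ is concentrated in degree $r$ with dimension one and $\mathrm{Frob}_q$-eigenvalue $(-1)^r\mathfrak{g}(\nu_r,\psi)$.

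To handle the non-trivial characters, I would diagonalize $\nu_r$, which is non-degenerate by Lemma \ref{detnu}: since $p$ is odd, after a linear change of variables we may assume $\nu_r = \sum_{i=1}^r a_i y_i^2$ with $a_i \in k^\times$. By additivity of the Artin-Schreier sheaf, $\mathscr{L}_\psi(\sum a_i y_i^2) = \boxtimes_{i=1}^r \mathscr{L}_\psi(a_i y^2)$, so Künneth gives $H^*_{\mathrm{c}}(\mathbb{A}^r, \mathscr{L}_\psi(\nu_r)) = \bigotimes_i H^*_{\mathrm{c}}(\mathbb{A}^1, \mathscr{L}_\psi(a_i y^2))$. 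A standard Grothendieck-Ogg-Shafarevich computation (Euler characteristic $1-\mathrm{Sw}_\infty = -1$, together with vanishing of $H^0_{\mathrm{c}}$ and $H^2_{\mathrm{c}}$) shows each factor is one-dimensional and concentrated in degree $1$, proving Part 1. Part 2 then follows directly from the Grothendieck-Lefschetz trace formula applied to $(\mathbb{A}^r_{/k}, \mathscr{L}_\psi(\nu_r))$: only $H^r_{\mathrm{c}}$ contributes, and the right-hand side of the formula is $\sum_{x \in k^r} (\psi \circ \mathrm{Tr}_{k/\mathbb{F}_{p^m}})(\nu_r(x)) = \mathfrak{g}(\nu_r,\psi)$.

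For Part 3, the one-dimensionality of $H^r_{\mathrm{c}}(\mathbb{A}^r, \mathscr{L}_\psi(\nu_r))$ forces $O(\nu_r)$ to act through a character $\chi \colon O(\nu_r) \to \overline{\mathbb{Q}}_\ell^\times$. By the Cartan-Dieudonné theorem (applicable since $p \neq 2$ and $\nu_r$ is non-degenerate), $O(\nu_r)$ is generated by reflections, so $\chi$ is determined by its values on reflections and it suffices to show $\chi(s) = -1$ for each reflection $s$. Given a reflection $s$ with respect to an anisotropic vector $v$, I pick a $\nu_r$-orthogonal basis starting with $v$; in these coordinates $\nu_r = a v_*^2 + \nu''(y_2,\ldots,y_r)$ with $\nu''$ non-degenerate and $s$ acting by $v_* \mapsto -v_*$, fixing the other coordinates. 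By Künneth,
\[
 H^r_{\mathrm{c}}(\mathbb{A}^r, \mathscr{L}_\psi(\nu_r)) \simeq H^1_{\mathrm{c}}(\mathbb{A}^1, \mathscr{L}_\psi(a v_*^2)) \otimes H^{r-1}_{\mathrm{c}}(\mathbb{A}^{r-1}, \mathscr{L}_\psi(\nu'')),
\]
and $s$ acts as $(-1) \otimes 1$, reducing Part 3 to the one-variable statement that $y \mapsto -y$ acts as $-1$ on $H^1_{\mathrm{c}}(\mathbb{A}^1, \mathscr{L}_\psi(ay^2))$.

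The main obstacle is this final 1-dimensional assertion, which I would settle as follows. Let $f \colon \mathbb{A}^1 \to \mathbb{A}^1$, $y \mapsto ay^2$, so $\mathscr{L}_\psi(ay^2) = f^* \mathscr{L}_\psi$; by the projection formula,
\[
 H^*_{\mathrm{c}}(\mathbb{A}^1, \mathscr{L}_\psi(ay^2)) = H^*_{\mathrm{c}}(\mathbb{A}^1, \mathscr{L}_\psi \otimes f_! \overline{\mathbb{Q}}_\ell),
\]
and the action of $y \mapsto -y$ corresponds to the natural $\{\pm 1\}$-action on $f_!\overline{\mathbb{Q}}_\ell$ coming from the Galois covering $f|_{\mathbb{G}_m}$. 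The $+1$-eigensheaf of $f_!\overline{\mathbb{Q}}_\ell$ is the constant sheaf $\overline{\mathbb{Q}}_\ell$ on $\mathbb{A}^1$, so the $+1$-eigenspace of $H^*_{\mathrm{c}}(\mathbb{A}^1, \mathscr{L}_\psi(ay^2))$ is $H^*_{\mathrm{c}}(\mathbb{A}^1, \mathscr{L}_\psi)$, which vanishes for $\psi \neq 1$ (as follows from decomposing the cohomology of the Artin-Schreier covering of $\mathbb{A}^1$). Hence the entire one-dimensional space lies in the $-1$-eigenspace, completing Part 3.
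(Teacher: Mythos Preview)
Your proof is correct. For Parts 1 and 2 you follow essentially the same route as the paper: the Artin--Schreier decomposition $H^i_{\mathrm{c}}(X_w)\simeq\bigoplus_\psi H^i_{\mathrm{c}}(\mathbb{A}^r,\mathscr{L}_\psi(\nu_r))$ and the Grothendieck--Lefschetz trace formula. The paper simply invokes the one-dimensionality of $H^r_{\mathrm{c}}(\mathbb{A}^r,\mathscr{L}_\psi(\nu_r))$ as a ``well-known fact'' and cites \cite[Sommes trig.\ Scholie 1.9]{DelCoet} for the Frobenius eigenvalue, whereas you spell out the diagonalization, K\"unneth, and Grothendieck--Ogg--Shafarevich steps explicitly.

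For Part 3 your argument is genuinely different. The paper dispatches the claim in one line by citing \cite[Lemma 2.2.3]{DenLoCh}. You instead give a self-contained proof: reduce to reflections via Cartan--Dieudonn\'e, use K\"unneth with respect to an orthogonal splitting $kv\oplus v^\perp$ to reduce to the one-variable case, and then settle that case by the projection formula along $y\mapsto ay^2$, identifying the $+1$-eigenspace with $H^*_{\mathrm{c}}(\mathbb{A}^1,\mathscr{L}_\psi)=0$. This is a clean and elementary argument that avoids appealing to an external reference; the paper's approach is shorter on the page but pushes the content into Denef--Loeser. Both are valid, and your route has the advantage of being entirely internal to the techniques already in play (Artin--Schreier sheaves, K\"unneth, projection formula).
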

\begin{proof}
We prove the claim $1$. 
We have an isomorphism 
\[
 H_{\mathrm{c}}^i(X_w) \simeq \bigoplus_{\psi \in \mathbb{F}_{p^m}^{\vee}} 
 H_{\mathrm{c}}^i(\mathbb{A}_{k^{\rmac}}^r,\mathscr{L}_{\psi}(\nu_r)) 
\]
as $\overline{\mathbb{Q}}_{\ell}[{\mathbb{F}_{p^m}}]$-modules.
Therefore, the claim follows from a well-known fact that 
\[
 \dim H_{\mathrm{c}}^i(\mathbb{A}_{k^{\rmac}}^r,\mathscr{L}_{\psi}(\nu_r)) = 
 \begin{cases}
 1 & \textrm{if $i=r$,} \\ 
 0 & \textrm{otherwise} 
 \end{cases}
\]
for any non-trivial character $\psi$ of $\bF_{p^m}$. 

We prove the claim $2$.
Let $F$ be the Frobenius endomorphism of 
$\mathbb{A}_{k}^r$ 
over ${k}$. 
Then, we have 
\[
 \mathrm{Tr} \left(F^{\ast},H_{\mathrm{c}}^r
 (\mathbb{A}_{k^{\rmac}}^r,\mathscr{L}_{\psi}(\nu_r))\right)=(-1)^r 
 \mathfrak{g}(\nu_r,\psi) 
\]
by 
\cite[Sommes trig. Scholie 1.9]{DelCoet}. 
Hence, we acquire the claim $2$ 
by $\mathrm{Frob}_q =F^{\ast}$. 
The claim $3$ 
follows from \cite[Lemma 2.2.3]{DenLoCh}. 
\end{proof}

\subsection{Even characteristic case}
Assume that $p=2$ in this subsection. 
We set $r'=r/2$.
We define $\Psi \colon X_{w,0} \to \mathbb{A}_{k}^r$ by 
\begin{equation}\label{Psidef}
 \Psi \bigr( (z,y_1, \ldots, y_r) \bigl) = 
 \Biggl( \sum_{1 \leq j \leq i} y_j  \Biggr)_{1 \leq i \leq r} . 
\end{equation}
Let $\Phi \colon \mathbb{A}_k^r \to \mathbb{A}_k^r$ 
be the purely inseparable map defined by 
$a_{2i} \mapsto a_{2i}$ and 
$a_{2i-1} \mapsto a_{2i-1}^{2^{m-1}}$ for $1 \leq i \leq r'$. 
We define an affine smooth variety $X'_{w,0}$ over $k$ by 
the cartesian diagram 
\begin{equation}\label{pure}
\begin{split}
\xymatrix{
 X'_{w,0} \ar[r]\ar[d] & X_{w,0} \ar[d]^-{\Psi} \\
 \mathbb{A}_{k}^r \ar[r]^-{\Phi} & \mathbb{A}_{k}^r.}
\end{split}
\end{equation}
Let $X'_w$ denote the base change 
$X'_{w,0} \times_{k} k^{\mathrm{ac}}$. 
We put 
\begin{equation}\label{eq1}
 s=\sum_{i=0}^{m-1}
 (a_{r-1}^{2^i-1}z)^{2^{m-i-1}}
 +\sum_{i=1}^{r'} a_{2i}. 
\end{equation}
\begin{lem}\label{l-f}
The variety $X'_{w,0}$ is isomorphic to
the affine smooth variety defined by 
\[
 s^2 +a_{r-1}^{2^{m-1}} s=
 (a_{r-1}^{2^m -1} -1)z +
 \sum_{i=1}^{r'} a_{2i-1}^{2^m} 
 +\sum_{i=1}^{r'-1} 
 a_{2i} (a_{r-1} +a_{2i+1} +a_{2i-1})^{2^{m-1}} 
 \quad \textrm{in} \quad \mathbb{A}_k^{r+1}. 
\]
\end{lem}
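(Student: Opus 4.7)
My plan is to carry out a direct algebraic verification in characteristic $2$, showing that the defining equation of the fibered product $X'_{w,0}$ agrees with the Artin-Schreier-type equation stated in the lemma.

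First I would realize $X'_{w,0}$ as a closed subvariety of $\mathbb{A}_k^{r+1}$ with coordinates $(z, a_1, \ldots, a_r)$. Using the fiber product \eqref{pure} and the explicit forms of $\Phi$ and $\Psi$ in \eqref{Psidef}, setting $s_j = y_1 + \cdots + y_j$ translates the cartesian condition into the relations $s_{2i-1} = a_{2i-1}^{2^{m-1}}$ and $s_{2i} = a_{2i}$, which together eliminate the $y_j$ via $y_j = s_j + s_{j-1}$ (with $s_0 = 0$, in characteristic $2$). Substituting into $z^{2^m} + z = \sum_{i \leq j} y_i y_j$ through the identity $\sum_{i \leq j} y_i y_j = \sum_j y_j s_j = \sum_j s_j^2 + \sum_{j \geq 2} s_{j-1} s_j$, and then splitting the cross-term sum according to the parity of $j$, yields an explicit equation in $z, a_1, \ldots, a_r$ whose cross-terms organize as $\sum_{i=1}^{r'-1} a_{2i}(a_{2i-1} + a_{2i+1})^{2^{m-1}} + a_r a_{r-1}^{2^{m-1}}$ (the last term coming from the boundary $j = r$, since $r$ is even).

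Next I would compute $s^2 + a_{r-1}^{2^{m-1}} s$ directly from \eqref{eq1}. Both $s^2$ and $a_{r-1}^{2^{m-1}} s$ contribute polynomials in $z$ of the shape $\sum_k a_{r-1}^{2^m - 2^k} z^{2^k}$, but over shifted ranges ($k = 1, \ldots, m$ for $s^2$ and $k = 0, \ldots, m-1$ for $a_{r-1}^{2^{m-1}} s$), so that the intermediate terms cancel pairwise in characteristic $2$ and only the two boundary contributions $z^{2^m}$ and $a_{r-1}^{2^m - 1} z$ survive. This rewrites the left-hand side of the proposed equation as $z^{2^m} + a_{r-1}^{2^m - 1} z + \sum_i a_{2i}^2 + a_{r-1}^{2^{m-1}} \sum_i a_{2i}$. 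This telescoping is the core insight and explains the particular form of $s$ chosen in \eqref{eq1}.

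To finish, I would equate the two computations and cancel common terms. Using the Frobenius decomposition $(a_{r-1} + a_{2i-1} + a_{2i+1})^{2^{m-1}} = a_{r-1}^{2^{m-1}} + (a_{2i-1} + a_{2i+1})^{2^{m-1}}$ in characteristic $2$, the remaining identity collapses: the extra $a_{r-1}^{2^{m-1}}$-terms in the proposed equation combine with $a_{r-1}^{2^{m-1}} \sum_{i=1}^{r'} a_{2i}$ from the $s$-expansion to leave precisely the boundary contribution $a_r a_{r-1}^{2^{m-1}}$ produced in the first step. The main obstacle, beyond routine bookkeeping of parity and boundary indices, is recognizing the characteristic-$2$ telescoping that reduces $s^2 + a_{r-1}^{2^{m-1}} s$ to the two-term expression $z^{2^m} + a_{r-1}^{2^m - 1} z$ modulo pure $a$-terms; without this observation the computation looks hopelessly messy.
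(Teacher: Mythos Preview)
Your proposal is correct and follows essentially the same approach as the paper: first derive the defining equation of $X'_{w,0}$ in the coordinates $(z,a_1,\ldots,a_r)$ via the fibre product (the paper's \eqref{X'eq}), then use the definition \eqref{eq1} of $s$ to rewrite it. The paper compresses the second step into the single phrase ``substituting \eqref{eq1} into \eqref{X'eq}, we have the claim,'' whereas you spell out the characteristic-$2$ telescoping $A^2+a_{r-1}^{2^{m-1}}A=z^{2^m}+a_{r-1}^{2^m-1}z$ and the cancellation of the boundary term $a_r a_{r-1}^{2^{m-1}}$ explicitly; but the argument is the same.
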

\begin{proof}
By \eqref{Psidef} and \eqref{pure}, the variety 
$X'_{w,0}$ is defined by 
\begin{equation}\label{X'eq}
 z^{2^m}-z=\sum_{i=1}^{r'} a_{2i-1}^{2^m} 
 +\sum_{i=1}^{r'-1} 
 a_{2i} (a_{2i} +a_{2i+1}^{2^{m-1}} +a_{2i-1}^{2^{m-1}}) 
 +a_r (a_r + a_{r-1}^{2^{m-1}} ) 
\end{equation}
in $\mathbb{A}_{k}^{r+1}$. 
Substituting \eqref{eq1} into \eqref{X'eq}, we have the claim. 
\end{proof}

Let 
$S \subset \mathbb{A}_{k^{\rmac}}^{r'}
=\Spec k^{\mathrm{ac}}[a_1, \ldots, a_{r'}]$
be the $0$-dimensional 
closed subscheme 
defined by $a_{r'} \in \mathbb{F}_{2^m}^{\times}$ and 
\[
 a_i=
 \begin{cases}
 a_{r'}  \quad &  \textrm{if $i \equiv r' \pmod{2}$},\\
 0  \quad & \textrm{otherwise}.
\end{cases}
\]
Let $U$ be the complement of $S$
in $\mathbb{A}_{k^{\rmac}}^{r'}$. 
We put 
$Y=\coprod_{S \times \mathbb{F}_2}\mathbb{A}_{k^{\rmac}}^{r'}$.
Then, we have 
a canonical morphism
$Y \to S$.
We define a morphism 
$\pi \colon X'_{w} \to \mathbb{A}_{k^{\rmac}}^{r'}$
by 
$(s,z,a_1,\ldots,a_{r-1}) \mapsto (a_1,a_3,\ldots,a_{r-1})$. 
Let $N_r$ be the cardinality of 
the set which consists of all integers 
$1 \leq l \leq r-1$ such that $r-1 \equiv l
\pmod{4}$. 
We put $s_1 =s/a_{r-1}^{2^{m-1}}$ 
on $\pi^{-1}(S)$. 
Then, we have 
\begin{equation}\label{gv2}
s_1^2-s_1=N_r 
\end{equation}
on $\pi^{-1}(S)$ 
by Lemma \ref{l-f}, 
because $a_{r-1}^{2^m -1} =1$, 
$\sum_{i=1}^{r'} a_{2i-1}^{2^m} =N_r a_{r-1}^{2^m}$ and 
$a_{r-1} +a_{2i+1} +a_{2i-1}=0$ for $1 \leq i \leq r' -1$ on 
$\pi^{-1}(S)$. 
We choose an element 
$\varrho \in k^{\mathrm{ac}}$ such that 
$\varrho^2 -\varrho=N_r$.
\begin{lem}\label{p1}
The projection $\pi \colon X'_{w} \to \mathbb{A}_{k^{\rmac}}^{r'}$
is an affine bundle of relative dimension $r'$ over $U$, and 
the morphism 
\begin{equation}\label{yf}
 \pi^{-1}(S) \to Y;\ 
 x=(s,z,a_1, \ldots,a_{r-1}) \mapsto 
 \bigl( z,(a_{2i})_{1 \leq i \leq r'-1} 
 \bigr)_{(\pi(x),s_1 -\varrho)} 
\end{equation}
is an isomorphism over $S$. 
\end{lem}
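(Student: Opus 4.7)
The plan is to work directly from the explicit equation of $X'_{w,0}$ provided by Lemma \ref{l-f}. After gathering the $z$-linear part, this equation reads
\[
 s^2 + a_{r-1}^{2^{m-1}} s = (a_{r-1}^{2^m-1} - 1) z + C + \sum_{i=1}^{r'-1} a_{2i}\,\beta_i,
\]
with $\beta_i = (a_{r-1} + a_{2i+1} + a_{2i-1})^{2^{m-1}}$ and $C = \sum_{i=1}^{r'} a_{2i-1}^{2^m}$. Over each fixed point of $\mathbb{A}_{k^{\rmac}}^{r'}$, this is a single polynomial equation in $(s, z, a_2, a_4, \ldots, a_{r-2}) \in \mathbb{A}^{r'+1}$, so the fiber of $\pi$ is cut out by one equation in an affine space of dimension $r' + 1$.

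To handle the affine-bundle claim over $U$, I would introduce the open cover $U = V_0 \cup V_1 \cup \cdots \cup V_{r'-1}$, where $V_0 = U \cap \{a_{r-1}^{2^m-1} \neq 1\}$ and $V_i = U \cap \{a_{r-1}^{2^m-1} = 1\} \cap \{\beta_i \neq 0\}$ for $1 \leq i \leq r'-1$. On $V_0$, the coefficient of $z$ is invertible, so I can solve for $z$, obtaining a trivialization with free coordinates $(s, a_2, \ldots, a_{r-2})$. On $V_i$, I can solve for $a_{2i}$, obtaining a trivialization with free coordinates $(s, z, a_2, \ldots, \widehat{a_{2i}}, \ldots, a_{r-2})$. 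Each trivialization has total dimension $r'$, so $\pi$ is an affine bundle of relative dimension $r'$ on each piece. The crucial step is verifying that these opens actually cover $U$: on $\{a_{r-1}^{2^m-1} = 1\} \setminus V_0$, the condition that all $\beta_i$ vanish translates (using that $x \mapsto x^{2^{m-1}}$ is bijective and we are in characteristic $2$) into the recursion $a_{2i+1} + a_{2i-1} = a_{r-1}$, which forces the alternating pattern $a_{2j-1} = a_{r-1}$ when $j \equiv r' \pmod{2}$ and $a_{2j-1} = 0$ otherwise; combined with $a_{r-1} \in \mathbb{F}_{2^m}^\times$, this is exactly the defining condition of $S$.

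Over $S$, the fiber equation simplifies to $s^2 + a_{r-1}^{2^{m-1}} s = C$. Dividing by $a_{r-1}^{2^m}$ and substituting $s_1 = s/a_{r-1}^{2^{m-1}}$ yields $s_1^2 + s_1 = C/a_{r-1}^{2^m}$; counting gives $C/a_{r-1}^{2^m} = \#\{i : i \equiv r' \pmod{2}\}$, which matches $N_r$ via the congruence $2i - 1 \equiv r - 1 \pmod{4} \Leftrightarrow i \equiv r' \pmod{2}$. This recovers the equation \eqref{gv2}. The relation $(s_1 - \varrho)^2 - (s_1 - \varrho) = (s_1^2 - s_1) - (\varrho^2 - \varrho) = 0$ in characteristic $2$ shows $s_1 - \varrho \in \mathbb{F}_2$ on $\pi^{-1}(S)$, so $s_1 - \varrho$ cuts $\pi^{-1}(S)$ into two connected components over $S$. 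Inside each component, $z$ and $a_2, a_4, \ldots, a_{r-2}$ are unconstrained, giving $\mathbb{A}^{r'}$. This shows that the morphism in \eqref{yf} is well-defined with inverse recovering $s = (\varrho + \epsilon) a_{r-1}^{2^{m-1}}$ from $\epsilon \in \mathbb{F}_2$ and $\pi(x) \in S$, so it is an isomorphism over $S$.

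The main obstacle is the bookkeeping in the second paragraph: showing that the failure of all $\beta_i$ to vanish (together with the genericity $a_{r-1}^{2^m-1} \neq 1$) is precisely complementary to $S$. This requires the downward recursion starting from $\beta_{r'-1} = 0$ and the identification of the count appearing in $C/a_{r-1}^{2^m}$ with $N_r$; both are elementary but rely on the residue condition $a_{r-1} \in \mathbb{F}_{2^m}^\times$ and careful parity tracking modulo $4$.
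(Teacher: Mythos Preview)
Your proof is correct and follows the same route as the paper: both deduce the lemma directly from the explicit equation in Lemma~\ref{l-f} and the identity \eqref{gv2}. The paper's proof is a single sentence (``The claims follow from Lemma~\ref{l-f} and \eqref{gv2}''), so your argument is simply the detailed unpacking of that sentence---the open cover $V_0,\ldots,V_{r'-1}$, the recursion showing that the common vanishing locus of the $\beta_i$ together with $a_{r-1}^{2^m-1}=1$ is exactly $S$, and the identification of $C/a_{r-1}^{2^m}$ with $N_r$ are precisely the computations the paper leaves implicit.
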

\begin{proof}
Outside $\pi^{-1}(S)$, we have 
$a_{r-1}^{2^m -1} -1\neq 0$ or 
$a_{r-1} +a_{2i+1} +a_{2i-1} \neq 0$ for some $1 \leq i \leq r' -1$. 
If $a_{r-1}^{2^m -1} -1\neq 0$, 
we can write $z$ by other parameters 
using the equation in Lemma \ref{l-f}. 
If $a_{r-1} +a_{2i+1} +a_{2i-1} \neq 0$ for some $1 \leq i \leq r' -1$, 
we can write $a_{2i}$ by other parameters similarly. 
Hence, $\pi$ is an affine bundle of relative dimension $r'$ over $U$. 

On the other hand, 
the parameter of $\pi^{-1}(S)$ is given by 
a point of $S$, 
$s_1$ satisfying \eqref{gv2} and 
$\bigl( z,(a_{2i})_{1 \leq i \leq r'-1} \bigr)$ with no relation. 
Hence the second claim follows. 
\end{proof}

The finite group 
$\mathbb{F}_{2^m}$ also acts on  
$X'_{w,0}$ by $z \mapsto z+b$
for $b \in \mathbb{F}_{2^m}$.
Clearly, $\pi^{-1}(S)$ 
is stable under this action.
By the isomorphism 
$\pi^{-1}(S) \simeq Y$ 
in Lemma \ref{p1}, the scheme $Y$ also has the action of 
${\mathbb{F}_{2^m}}$. 
The purely inseparable map $X'_{w,0} \to X_{w,0}$ 
in \eqref{pure} induces an isomorphism 
\begin{equation}\label{pure2}
H_{\mathrm{c}}^i(X_{w}) \xrightarrow{\sim} H_{\mathrm{c}}^i(X'_{w})
\end{equation}
as $\mathbb{F}_{2^m}$-representations for any integer $i$. 
Let $V$ be the complement of $\pi^{-1}(S)$ in $X'_{w}$.
Since 
$\pi|_V \colon V \to U$ is an affine bundle 
of relative dimension $r'$ by Lemma \ref{p1},
we have 
\begin{equation}\label{RpiV}
 R(\pi|_V)_!
 \overline{\mathbb{Q}}_{\ell} \simeq 
 \overline{\mathbb{Q}}_{\ell}(-r')[-2r'] 
\end{equation}
by \cite[VII Proposition 1.1]{SGA5}. 
Hence, 
we have an isomorphism
\begin{equation}\label{co1}
H_{\mathrm{c}}^{r}(X'_{w}) \simeq \ker 
(H_{\mathrm{c}}^{r}(Y) \overset{\delta}{\to} H_{\mathrm{c}}^{r+1}(V))
\end{equation}
as $\mathbb{F}_{2^m}$-representations 
by the localization sequence for 
$\left(X'_{w},\pi^{-1}(S)\right)$. 
For an integer $m_1$ and a positive odd integer $m_2$, 
let $\bigl( \frac{m_1}{m_2} \bigr)$ denote 
the Jacobi symbol. 

\begin{prop}\label{HX2}
{\rm 1.} We have an isomorphism 
\[
 H_{\mathrm{c}}^r(X_{w}) \simeq 
 \bigoplus_{\psi \in \mathbb{F}_{2^m}^{\vee} \backslash \{1\}}\psi 
\] 
as $\mathbb{F}_{2^m}$-representations, and 
$H^i_{\mathrm{c}}(X_{w})=0$ for $i \neq r,2r$. \\ 
{\rm 2.}  
The element 
$\mathrm{Frob}_q$ acts on 
$H_{\mathrm{c}}^r(X_w)$ as 
scalar multiplication by 
$\left(\frac{q}{r+1}\right)q^{r'}$. \\ 
{\rm 3.} 
Let $g$ be a $k$-automorphism of 
$X_{w,0}$ of finite order. 
Assume that $g$ commutes 
with the action of ${\mathbb{F}_{2^m}}$ and 
the order of $g$ is odd. 
Then, $g$ 
acts on $H_{\mathrm{c}}^n(X_w)$
trivially.
\end{prop}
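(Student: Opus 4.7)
The plan is to use the purely inseparable map $X'_w \to X_w$ of (\ref{pure}), which induces the $\mathbb{F}_{2^m}$-equivariant isomorphism (\ref{pure2}), together with the stratification $X'_w = V \sqcup \pi^{-1}(S)$ from Lemma \ref{p1} and the long exact sequence of compactly supported cohomology.

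\textbf{Part 1.} Applying the Gysin sequence for $S \hookrightarrow \mathbb{A}^{r'}_{k^{\mathrm{ac}}}$ and Poincar\'e duality on $U = \mathbb{A}^{r'}_{k^{\mathrm{ac}}} \setminus S$, the only nonzero terms of $H_c^\ast(U)$ are $H_c^{2r'}(U) \simeq \overline{\mathbb{Q}}_\ell(-r')$ and $H_c^1(U) \simeq \overline{\mathbb{Q}}_\ell^{|S|}$ (with trivial Galois action). Combined with (\ref{RpiV}), $H_c^i(V) \simeq H_c^{i-2r'}(U)(-r')$; the long exact sequence for $(X'_w, \pi^{-1}(S))$ then gives $H_c^i(X'_w) = 0$ for $i \notin \{r, r+1, 2r\}$, with $H_c^{2r}(X'_w) \simeq \overline{\mathbb{Q}}_\ell(-r)$ and, via (\ref{co1}), $H_c^r(X'_w) \simeq \ker \delta$, $H_c^{r+1}(X'_w) \simeq \mathrm{coker}\,\delta$. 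Using formula (\ref{eq1}), I compute that the translation $z \mapsto z+b$ for $b \in \mathbb{F}_{2^m}$ permutes the components of $Y$ indexed by $S \times \mathbb{F}_2$ via $(a_{r-1},\epsilon) \mapsto (a_{r-1},\epsilon + \mathrm{Tr}_{\mathbb{F}_{2^m}/\mathbb{F}_2}(b/a_{r-1}))$, so
\[
H_c^r(Y) \simeq \overline{\mathbb{Q}}_\ell(-r') \otimes \Bigl( \mathbf{1}^{\oplus |S|} \oplus \bigoplus_{\psi \in \mathbb{F}_{2^m}^\vee \setminus \{1\}} \psi \Bigr)
\]
as $\mathbb{F}_{2^m}$-representation. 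Since $\pi|_V$ is $\mathbb{F}_{2^m}$-equivariant with trivial action on $U$, the target of $\delta$ has trivial $\mathbb{F}_{2^m}$-action, so $\delta$ annihilates every nontrivial isotypic summand. A local computation near each point $p \in S$ (relating the boundary class of a component of $\pi^{-1}(\{p\})$ to the Gysin generator at $p$) then shows that $\delta$ restricts to an isomorphism on the trivial isotypic summand, giving $H_c^{r+1}(X'_w)=0$ and $\ker\delta = \bigoplus_{\psi\neq 1} \psi$.

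\textbf{Part 2.} Because $\mathbb{F}_{2^m} \subseteq k$, Frobenius acts trivially on $\mathbb{F}_{2^m}^\vee$ and hence preserves each $1$-dimensional $\psi$-isotypic line, acting there by a scalar. I read off this scalar via the inclusion $H_c^r(X_w) \hookrightarrow H_c^r(Y)$: the points of $S$ are $\mathbb{F}_{2^m}$-rational, hence Frobenius-fixed, while the $\mathbb{F}_2$-indexing via $s_1-\varrho$ (with $\varrho^2-\varrho=N_r$) is Frobenius-stable precisely when $\varrho \in \mathbb{F}_q$, that is, unless both $N_r$ and $f = [k:\mathbb{F}_2]$ are odd. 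In the exceptional case Frobenius swaps $(a_{r-1},0) \leftrightarrow (a_{r-1},1)$ and contributes $-1$ on the nontrivial isotypic part. Thus the Frobenius scalar is $(-1)^{N_r f} q^{r'}$, and a case analysis of $r' \bmod 4$ — using the computation of $N_r$ given before (\ref{gv2}) and the supplementary formula for $\left(\frac{2}{r+1}\right)$ — yields $(-1)^{N_r} = \left(\frac{2}{r+1}\right)$, so $(-1)^{N_r f} = \left(\frac{q}{r+1}\right)$.

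\textbf{Part 3.} Since $g$ commutes with $\mathbb{F}_{2^m}$, it preserves each $1$-dimensional $\psi$-isotypic line and acts as a scalar $\lambda_\psi$, an odd-order root of unity. The fibration $\pi$ is not $g$-equivariant, so the direct reading used for Frobenius in Part 2 is unavailable. The plan is to use the $\ell$-independence argument flagged in the introduction: since the trace $\sum_\psi \lambda_\psi$ of $g$ on $H_c^r(X_w, \overline{\mathbb{Q}}_\ell)$ is independent of $\ell$ for $\ell \neq p$, by changing $\ell$ in the coefficients and comparing with a tractable computation for a suitable auxiliary $\ell$ one shows this trace equals $2^m-1$, forcing every $\lambda_\psi = 1$. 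Establishing the requisite $\ell$-independence statement and carrying out the auxiliary computation is the main obstacle in Part 3; the rest then follows from the geometric analysis of Part 1.
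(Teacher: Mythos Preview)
Your treatment of Parts 1 and 2 follows the paper's argument essentially verbatim: the purely inseparable cover, the stratification $X'_w = V \sqcup \pi^{-1}(S)$, the identification of the $\mathbb{F}_{2^m}$-action on $\pi_0(Y)$, and the computation of the Frobenius sign via $s_1(P)^q - s_1(P) = f N_r$ together with $(-1)^{N_r} = \bigl(\tfrac{2}{r+1}\bigr)$ are all exactly what the paper does.

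In Part 3, however, you have left the argument genuinely incomplete, and the missing step is not a ``computation'' at all. You work over $\overline{\mathbb{Q}}_\ell$, where the eigenvalue $\lambda_\psi$ is only constrained to lie in $\mu_{n_g}$; no choice of $\ell$ helps there. The paper's key move is to pass to \emph{rational} coefficients $\mathbb{Q}_\ell$: since the characters of the additive group $\mathbb{F}_{2^m}$ are $\{\pm 1\}$-valued, the isotypic decomposition already holds over $\mathbb{Q}_\ell$, and the scalar $\alpha_{\psi,\ell}$ by which $g$ acts lies in $\mathbb{Q}_\ell^\times$. Being a root of unity in $\mathbb{Q}_\ell^\times$ (for $\ell$ odd), it must lie in $\mu_{\ell-1}(\mathbb{Q}_\ell)$. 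Now choose an odd prime $\ell'$ with $\gcd(n_g,\ell'-1)=1$ (possible since $n_g$ is odd, e.g.\ by Dirichlet in the class $\ell' \equiv 2 \pmod{n_g}$). Then $\alpha_{\psi,\ell'} \in \mu_{\ell'-1} \cap \mu_{n_g} = \{1\}$ automatically, so $\mathrm{Tr}(g,H^r_c(X_w,\mathbb{Q}_{\ell'})) = 2^m-1$ with no further work. The $\ell$-independence of this trace is a standard citation (the paper uses \cite[3.5(c)]{IllMtr}), and the conclusion that a sum of $2^m-1$ roots of unity equalling $2^m-1$ forces each to be $1$ is immediate. So the ``main obstacle'' you flag dissolves once you drop the algebraic closure on the coefficients.
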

\begin{proof}
We prove the first claim in $1$.
By \eqref{pure2}, 
it suffices to study the right hand side of \eqref{co1}. 
By the isomorphism 
$S \xrightarrow{\sim} \mathbb{F}_{2^m}^{\times}; \  
 (a_1,\ldots,a_{r'}) \mapsto a_{r'}$
 and \eqref{yf}, 
we identify 
$\pi_0(\pi^{-1}(S))$ with 
$\pi_0(Y) = \mathbb{F}_{2^m}^{\times} \times \mathbb{F}_2$. 
Let $b \in 
\mathbb{F}_{2^m}$.
Then, by \eqref{eq1}, the automorphism 
$z \mapsto z+b$
of $X'_w$ induces the automorphism of 
$\pi_0(\pi^{-1}(S)) = \mathbb{F}_{2^m}^{\times} 
\times \mathbb{F}_2$ 
which is defined by 
$(a,x) \mapsto (a,
x+\mathrm{Tr}_{\mathbb{F}_{2^m}/\mathbb{F}_2}(b/a))$
for $(a,x) \in \mathbb{F}_{2^m}^{\times} 
\times \mathbb{F}_2$. 
Hence, 
we obtain an isomorphism 
\[
H_{\mathrm{c}}^{r}(Y) \simeq 
\left(\left(\bigoplus_{\psi \in \mathbb{F}_{2^m}^{\vee} 
\backslash \{1\}}
\psi\right) \oplus 
\overline{\mathbb{Q}}_{\ell}^{\oplus(2^m-1)}\right)(-r')
\]
as $\mathbb{F}_{2^m}$-representations.
We have isomorphisms 
$H_{\mathrm{c}}^{r+1}(V) \simeq H_{\mathrm{c}}^{1}(U)(-r') 
 \simeq \overline{\mathbb{Q}}_{\ell}(-r')^{\oplus(2^m-1)}$
as $\mathbb{F}_{2^m}$-representations by \eqref{RpiV}. 
Then $\delta$ is given by the natural projection 
\begin{equation}\label{pou}
 H_{\mathrm{c}}^r(Y) \simeq 
 \left(\left(\bigoplus_{\psi \in \mathbb{F}_{2^m}^{\vee} \backslash \{1\}}
 \psi \right)\oplus 
 \overline{\mathbb{Q}}_{\ell}^{\oplus(2^m-1)}\right)(-r')
 \overset{\mathrm{pr}}{\longrightarrow}
 \overline{\mathbb{Q}}_{\ell}^{\oplus(2^m-1)}(-r') 
 \simeq 
 H_{\mathrm{c}}^{r+1}(V). 
\end{equation}
Hence, the first claim in $1$ follows.

We prove the second claim in $1$. 
By \eqref{pure2} and \cite[XIV 3.2]{SGA4-3}, 
it suffices to show 
$H_{\mathrm{c}}^i(X'_w)=0$ for 
$r< i< 2r$.
Since $\delta$ 
is surjective, 
we obtain 
$H_{\mathrm{c}}^{r+1}(X'_w) \simeq 0$.
For any 
$r+1 < i < 2r$, 
we have 
$H_{\mathrm{c}}^i(X'_w) \simeq H_{\mathrm{c}}^i(V) \simeq H_{\mathrm{c}}^{i-r}(U)(-r')=0$.
Hence, the second claim in $1$ follows. 

We prove the claim 2. 
Let $P$ be a $k^{\mathrm{ac}}$-valued point of $X'_w$. 
By \eqref{gv2}, we acquire 
$s_1(P)^q-s_1(P)=f N_r$.
Therefore, 
the claim follows from the description \eqref{pou} and 
$(-1)^{N_r}=\bigl( \frac{2}{r+1} \bigr)$. 
 
We prove the claim $3$. In just the same way as the claim $1$, 
we have an isomorphism
\[
 H_{\mathrm{c}}^r(X_w,{\mathbb{Q}}_{\ell}) 
 \simeq \bigoplus_{\psi \in \mathbb{F}_{2^m}^{\vee} \backslash \{1\}}\psi
\] 
as ${\mathbb{F}_{2^m}}$-representations.
Since $g$ commutes with the ${\mathbb{F}_{2^m}}$-action, 
$g$ preserves each $\psi$-part and 
acts on it as multiplication by some scalar 
$\alpha_{\psi,\ell} \in \mathbb{Q}_{\ell}^{\times}$.
We have 
$\alpha_{\psi,\ell} \in \mu_{\ell-1}(\mathbb{Q}_{\ell})$, because 
$g$ has a finite order. 
we prove that $\alpha_{\psi,\ell}=1$ 
for any $\psi$ and $\ell \neq 2$. 
Let $n_g$ be the order of $g$.
By the assumption 
that $n_g$ is odd, 
we choose an odd prime number 
$\ell'$ such that 
$(n_g,\ell'-1)=1$.
Because 
$\alpha_{\psi,\ell'} \in \mu_{\ell'-1}(\mathbb{Q}_{\ell'})$ and 
$(n_g,\ell'-1)=1$,
we obtain $\alpha_{\psi,\ell'}=1$.
We prove $\alpha_{\psi,\ell}=1$ 
for any odd prime $\ell$.
By \cite[3.5(c)]{IllMtr}, 
we acquire 
\[
\mathrm{Tr} (g_{\ast}, H_{\mathrm{c}}^r(X_w,\mathbb{Q}_{\ell}))
=\mathrm{Tr} (g_{\ast}, H_{\mathrm{c}}^r(X_w,\mathbb{Q}_{\ell'}))=2^m-1.
\]
Hence, we have 
$\sum_{\psi \in \mathbb{F}_{2^m}^{\vee} \backslash \{1\}}\alpha_{\psi,\ell}=2^m-1$.
Since any 
$\alpha_{\psi,\ell}$ is a root of unity, 
we obtain 
$\alpha_{\psi,\ell}=1$ 
for any $\psi$
and $\ell \neq 2$.
\end{proof}

\begin{rem}
Later, we apply Proposition \ref{HX2}.3 to 
$\bfg_L$ in \eqref{g_z}, 
whose action does not preserve the fibration 
in Lemma \ref{p1}. 
\end{rem}

\section{Realization of LLC and LJLC}\label{RealLLCLJLC}
\subsection{Explicit descriptions of the correspondences}\label{ExpCorr}
\begin{defn}
We say that a smooth irreducible supercuspidal representation 
of $\iGL_n (K)$ is simple supercuspidal 
if its exponential Swan conductor is one. 
We apply the same definition to 
a smooth irreducible representation of $D^{\times}$. 
\end{defn}

We fix a uniformizer $\varpi$ of $K$. 
Let $\zeta \in \mu_{q-1} (K)$. 
We take $\varphi_{\zeta} \in \bfC$ such that 
$\varphi_{\zeta}^n =\zeta \varpi$. 
We put $L_{\zeta}=K(\varphi_{\zeta})$. 
We fix  
a non-trivial character 
\[
 \psi \colon k \to 
 \overline{\mathbb{Q}}_{\ell}^{\times}. 
\]
We take an additive character 
\[
 \psi_K \colon K \to 
 \overline{\mathbb{Q}}_{\ell}^{\times} 
\]
such that 
$\psi_K (x) =\psi(\bar{x})$ 
for $x \in \mathcal{O}_K$. 
We put 
\[
 \varphi_{M,\zeta} = 
 \begin{pmatrix}
 \bm{0} & I_{n-1} \\
 \zeta \varpi & \bm{0} \\
 \end{pmatrix}
 \in M_n (K). 
\]

We regard $L_{\zeta}$ as a sub-$K$-algebra of $M_n(K)$ 
by $\varphi_{\zeta} \mapsto \varphi_{M,\zeta}$. 
Let 
$\chi \in (k^{\times})^{\vee}$ and 
$c \in \overline{\mathbb{Q}}_{\ell}^{\times}$. 
We define a character 
$\Lambda_{\zeta,\chi,c} \colon L_{\zeta}^{\times} U_{\mathfrak{I}}^1 
 \to \overline{\mathbb{Q}}_{\ell}^{\times}$ by 
\begin{align*}
 &\Lambda_{\zeta,\chi,c} (\varphi_{M,\zeta})=c, \quad 
 \Lambda_{\zeta,\chi,c} (x) =\chi (\bar{x}) \quad  \textrm{ for 
 $x \in \mu_{q-1} (K)$}, \\ 
 &\Lambda_{\zeta,\chi,c} (x)=
 (\psi_K \circ 
 \mathrm{tr} )( \varphi_{M,\zeta}^{-1}(x-1)) \quad 
 \textrm{for $x \in U_{\mathfrak{I}}^1$}. 
\end{align*}
We put 
\[
 \pi_{\zeta,\chi,c} = 
 \mathrm{c\mathchar`-Ind}_{L_{\zeta}^{\times} U_{\fI}^1}^{\iGL_n(K)} 
 \Lambda_{\zeta,\chi,c}. 
\]
Then, $\pi_{\zeta,\chi,c}$ 
is a simple supercuspidal representation of $\mathit{GL}_n (K)$, 
and 
every simple supercuspidal representation of $\mathit{GL}_n (K)$ 
is isomorphic to 
$\pi_{\zeta,\chi,c}$ for a uniquely determined 
$(\zeta,\chi,c) \in \mu_{q-1} (K) \times (k^{\times})^{\vee} 
 \times \overline{\mathbb{Q}}_{\ell}^{\times}$ 
by \cite[2.1 and 2.2]{BHLepi}. 

We take an embedding $L_{\zeta} \hookrightarrow D$ as $K$-algebras. 
We write $\varphi_{D,\zeta}$ for the image of $\varphi_{\zeta}$ 
under the embedding 
$L_{\zeta} \hookrightarrow D$. 
We define a character 
$\theta_{\zeta,\chi,c} \colon L_{\zeta}^{\times} U_D ^1 
 \to \overline{\mathbb{Q}}_{\ell}^{\times}$ by
\begin{align*}
 &\theta_{\zeta,\chi,c} (\varphi_{D,\zeta})=(-1)^{n-1}c, \quad 
 \theta_{\zeta,\chi,c} (x) =\chi (\bar{x}) \quad \textrm{for 
 $x \in \mu_{q-1} (K)$}, \\  
 &\theta_{\zeta,\chi,c} (d) =
 \left(\psi_K \circ 
 \mathrm{Trd}_{D /K}\right)( \varphi_{D,\zeta}^{-1}(d-1)) \quad 
 \textrm{for $d \in U_D ^1$}. 
\end{align*}
We put 
\[
 \rho_{\zeta, \chi,c} = 
 \mathrm{Ind}_{L_{\zeta}^{\times} U_D ^1}^{D^{\times}} 
 \theta_{\zeta, \chi,c}. 
\]
Then $\rho_{\zeta, \chi,c}$ is a simple supercuspidal representation 
by \cite[Proposition 2.6]{ABPSdep}. 
The isomorphism class of $\rho_{\zeta, \chi,c}$ does not depend 
on the choice of the embedding $L_{\zeta} \hookrightarrow D$. 
Every simple supercuspidal representation of $D^{\times}$ 
is isomorphic to 
$\rho_{\zeta,\chi,c}$ for a uniquely determined 
$(\zeta,\chi,c) \in \mu_{q-1} (K) \times (k^{\times})^{\vee} 
 \times \overline{\mathbb{Q}}_{\ell}^{\times}$ 
by \cite[Proposition 1.3]{ITsimpJL}. 

Let $E$ be a finite separable extension of $K$. 
We put 
$\psi_E=\psi_K \circ \mathrm{Tr}_{E/K}$. 
Let 
\begin{equation}\label{R}
 R_{E/K}=\mathrm{Ind}_{E/K}1_E, \quad 
 \delta_{E/K}=\det R_{E/K}, 
\end{equation}
where $1_E$ denote the trivial representation of $W_E$. 
Note that $\delta_{E/K}^2 =1$ (cf.~\cite[29.2]{BHLLCGL2}). 
By the local class field theory, we view $\delta_{E/K}$
as a character of $K^{\times}$. 
For a semi-simple smooth 
representation $\rho$ of $W_E$,
let $\epsilon(\rho,s,\psi_E)$ denote the 
Langlands-Deligne local constant of 
$\rho$ (cf.\ \cite[Theorem 29.4]{BHLLCGL2}).
We set 
\[
 \lambda_{E/K}(\psi_K)
 =\frac{\epsilon(R_{E/K},s,\psi_K)}{\epsilon(1_E,s,\psi_E)}, 
\]
which we call the Langlands constant 
of $E$ over $K$ (cf.\ \cite[30.4]{BHLLCGL2}). 
We write $k_E$ 
for the residue field of $E$. 
For $x \in k_E^{\times}$, let 
$\bigl( \frac{x}{k_E} \bigr)$ 
denote the quadratic residue symbol 
of $k_E$. 

\begin{lem}\label{deltame}
Let $E$ be a totally tamely ramified extension of $K$ 
of odd degree $e$. 
Then $\delta_{E/K}$ is the unramified character 
satisfying 
$\delta_{E/K} (\varpi)=\bigl( \frac{q}{e} \bigr)$. 
\end{lem}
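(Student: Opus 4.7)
The plan is to realize $\delta_{E/K}$ as a signature character and compute it combinatorially. The starting point is the elementary identity $\det\mathrm{Ind}_H^G 1_H = \mathrm{sgn}_{G/H}$ (the signature of the left translation action of $G$ on $G/H$). Applied to $W_E \subset W_K$, this shows that $\delta_{E/K}$, viewed as a character of $W_K$, is the signature of the action of $W_K$ on the $e$-element set $W_K/W_E$.

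Since $E/K$ is totally ramified we have $k_E=k$, so one may fix an arithmetic Frobenius lift $\phi \in W_E \subset W_K$. Then $W_K = I_K \cdot W_E$ and $W_E \cap I_K = I_E$, yielding a canonical $W_K$-equivariant bijection $I_K/I_E \xrightarrow{\sim} W_K/W_E$ on which $I_K$ acts by left translation and $\phi$ by conjugation. As $(e,p)=1$, the quotient $I_K/I_E$ is cyclic of order $e$; I fix an isomorphism with $\mathbb{Z}/e\mathbb{Z}$.

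Unramifiedness is then immediate: for $\sigma\in I_K$ with image $\bar\sigma\in\mathbb{Z}/e\mathbb{Z}$, the translation $x\mapsto x+\bar\sigma$ is a disjoint union of $\gcd(\bar\sigma,e)$ cycles of common length $e/\gcd(\bar\sigma,e)$; since $e$ is odd, each such length is odd and the signature is $+1$. For the value at $\varpi$, a routine computation using the cocycle $\sigma\mapsto\sigma(\varpi^{1/n})/\varpi^{1/n}$ together with $\phi(\zeta)=\zeta^q$ for $\zeta\in\mu_n(k^{\rmac})$ shows that $\phi$ acts on the tame inertia, and hence on $\mathbb{Z}/e\mathbb{Z}$, by multiplication by $q$. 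Under the paper's normalization, $\mathrm{Art}_K(\varpi)$ is the geometric Frobenius $\phi^{-1}$, which acts by multiplication by $q^{-1}$; being the inverse permutation of $\mathrm{mult}_q$, it has the same signature.

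The final ingredient, which is the only non-routine step, is Zolotarev's lemma: for odd $e$ and $\gcd(q,e)=1$, the signature of multiplication by $q$ on $\mathbb{Z}/e\mathbb{Z}$ equals the Jacobi symbol $\bigl(\tfrac{q}{e}\bigr)$. This is classical: the case of prime $e$ is Zolotarev's original result, and the extension to odd composite $e$ follows because both sides are multiplicative in $e$ via the Chinese Remainder Theorem (the sign of the CRT bijection is immaterial under conjugation, and the factor $\mathrm{sgn}(\mathrm{mult}_q)^{e_i}$ produced by writing $\mathrm{mult}_q$ as a product of commuting permutations is harmless when each $e_i$ is odd). Combining all of the above yields $\delta_{E/K}(\varpi)=\bigl(\tfrac{q}{e}\bigr)$, completing the proof.
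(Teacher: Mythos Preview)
Your proof is correct and takes a genuinely different route from the paper's. The paper argues purely by citation: it first invokes Deligne's comparison of tame extensions to reduce from positive to zero characteristic, and then quotes the result from Bushnell--Fr\"ohlich \cite[(10.1.6)]{BFGdiv}. Your approach, by contrast, is direct and characteristic-free: you realize $\delta_{E/K}$ as the permutation sign character on $W_K/W_E \cong I_K/I_E \cong \mu_e$, verify unramifiedness by observing that every translation on an odd cyclic group is a product of odd-length cycles, compute the Frobenius action as multiplication by $q$ on $\mathbb{Z}/e\mathbb{Z}$, and finish with Zolotarev's lemma. This is more elementary and entirely self-contained (modulo the classical Zolotarev result), avoiding both Deligne's machinery and the external reference; the trade-off is that one must actually carry out the short combinatorial argument rather than point to the literature. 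One minor typo: in your cocycle you write $\varpi^{1/n}$ and $\mu_n$ where you mean $\varpi^{1/e}$ and $\mu_e$.
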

\begin{proof}
First assume that the characteristic of $K$ is positive. 
Take a non-archimedean local field $K_{(0)}$ of characteristic zero 
such that the residue field of $K_{(0)}$ is isomorphic to $k$. 
Then we have a correspondence between 
totally tamely ramified extensions of $K$ and $K_{(0)}$ 
by \cite[3.4]{Delp0}. 
Let $E_{(0)}$ be the totally tamely ramified extensions of $K_{(0)}$ 
corresponding to $E$. 
Then $\delta_{E_{(0)}/K_{(0)}}$ corresponds to 
$\delta_{E/K}$ under the isomorphism 
\cite[(3.5.1)]{Delp0}. 
Hence, the claim is reduced to the case where 
the characteristic of $K$ is zero. 
In this case, it is proved in \cite[(10.1.6)]{BFGdiv}. 
\end{proof}

\begin{lem}\label{delres}
Let $E$ be a totally tamely ramified extension of $K$ 
of degree $e$. 
Then we have $\delta_{E/K}(x) =\bigl( \frac{\ol{x}}{k} \bigr)^{e-1}$ 
for $x \in \cO_K^{\times}$. 
\end{lem}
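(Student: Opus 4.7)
The plan is to induct on $e$, splitting by parity. When $e$ is odd, Lemma \ref{deltame} already asserts that $\delta_{E/K}$ is unramified, hence $\delta_{E/K}(x)=1$ for every $x\in\cO_K^{\times}$; this matches $\bigl(\frac{\bar{x}}{k}\bigr)^{e-1}$, which equals $1$ because $e-1$ is even.

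When $e=2m$ is even (so $p$ is necessarily odd), I would first produce a quadratic subextension: pick a uniformizer $\pi\in E$ with $\pi^{e}\in K$ a uniformizer, and set $F=K(\pi^{m})$. Then $F/K$ is totally tamely ramified of degree $2$ and $E/F$ is totally tamely ramified of degree $m<e$. From $R_{E/K}\simeq\Ind_{W_F}^{W_K}R_{E/F}$ and the general identity
\[
 \det\bigl(\Ind_H^G\rho\bigr)=\bigl(\det\Ind_H^G\mathbf{1}\bigr)^{\dim\rho}\cdot(\det\rho\circ V_{G\to H}),
\]
applied with the transfer $V\colon W_K^{\rmab}\to W_F^{\rmab}$, and the classical fact that under local class field theory this transfer corresponds to the inclusion $K^{\times}\hookrightarrow F^{\times}$, one obtains
\[
 \delta_{E/K}(x)=\delta_{F/K}(x)^{m}\cdot\delta_{E/F}(x)\qquad(x\in\cO_K^{\times}).
\]
For the first factor, since $F=K(\sqrt{\varpi'})$ for a uniformizer $\varpi'$ of $K$, the character $\delta_{F/K}$ is the unique nontrivial quadratic character of $K^{\times}/N_{F/K}F^{\times}$, and a direct norm-residue computation (equivalently, the Hilbert symbol identity $(u,\varpi')_K=\bigl(\frac{\bar{u}}{k}\bigr)$ for $u\in\cO_K^{\times}$) shows its restriction to units is the Legendre symbol $\bigl(\frac{\bar{x}}{k}\bigr)$. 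For the second factor, the induction hypothesis applied to $E/F$ (whose residue field is $k$ since $F/K$ is totally ramified) gives $\delta_{E/F}(x)=\bigl(\frac{\bar{x}}{k}\bigr)^{m-1}$. Multiplying,
\[
 \delta_{E/K}(x)=\Bigl(\tfrac{\bar{x}}{k}\Bigr)^{m}\cdot\Bigl(\tfrac{\bar{x}}{k}\Bigr)^{m-1}=\Bigl(\tfrac{\bar{x}}{k}\Bigr)^{2m-1}=\Bigl(\tfrac{\bar{x}}{k}\Bigr)^{e-1},
\]
closing the induction.

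The only step that needs careful justification is the tower formula for $\det\Ind$ together with its compatibility with local class field theory via the transfer; however, both are classical facts, and the quadratic base case is a routine norm-group computation, so I do not expect any serious obstacle.
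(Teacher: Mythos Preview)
Your argument is correct. The odd case is handled identically to the paper via Lemma~\ref{deltame}. For even $e$, however, the paper proceeds differently: instead of taking the quadratic step at the \emph{bottom} and inducting, it takes the quadratic step at the \emph{top}, choosing the subextension $E'$ with $[E:E']=2$. The same $\det\Ind$ formula (cited there as \cite[29.2 Proposition]{BHLLCGL2}) then gives $\delta_{E/K}=\delta_{E'/K}^{2}\cdot(\delta_{E/E'}|_{K^{\times}})=\delta_{E/E'}|_{K^{\times}}$, because the exponent on the lower factor is $[E:E']=2$ and $\delta_{E'/K}^{2}=1$. Thus the paper reduces in one stroke to the single quadratic extension $E/E'$, whose residue field is $k$, and reads off $\delta_{E/E'}(x)=\bigl(\frac{\bar{x}}{k}\bigr)=\bigl(\frac{\bar{x}}{k}\bigr)^{e-1}$ since $e-1$ is odd. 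Your choice of splitting forces an induction on $e$ because the exponent on $\delta_{F/K}$ is $m=e/2$, which need not be even; the paper's choice kills that factor outright. Both routes rest on the same classical ingredients (the determinant--of--induction formula and the norm-residue computation for a tamely ramified quadratic extension), so the difference is purely organizational: the paper's version is shorter and avoids the induction, while yours is a perfectly valid and transparent inductive unwinding.
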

\begin{proof}
If $e$ is odd, the claim follows from Lemma \ref{deltame}. 
Assume that $e$ is even. 
Take the subextension $E'$ of $E$ over $K$ 
satisfying $[E:E']=2$. 
Then we have 
$\delta_{E/K}=\delta_{E/E'}|_{K^{\times}}$ 
by \cite[29.2 Proposition]{BHLLCGL2}. 
The claim follows from this, because 
$\delta_{E/E'}$ is the non-trivial character 
$E'^{\times} \to E'^{\times}/ \Nr_{E/E'}(E^{\times}) 
 \simeq \{ \pm 1 \}$. 
\end{proof}

We define a character $\mu_{\zeta}$ of 
$L_{\zeta}^{\times}$
by 
\begin{equation}\label{mud}
 \mu_{\zeta}|_{U_{L_{\zeta}}^1}=1, \hspace{1.0em} 
 \mu_{\zeta}|_{K^{\times}}=
 \delta_{L_{\zeta}/K}, \hspace{1.0em}
 \mu_{\zeta}(\varphi_{\zeta})= 
 \lambda_{L_{\zeta}/K}(\psi_K).
\end{equation}
We set 
$\xi_{\zeta,\chi,c}=\Lambda_{\zeta,\chi,c}|_{L_{\zeta}^{\times}}$. 
We view a character of $L_{\zeta}^{\times}$ 
as a character of $W_{L_{\zeta}}$ by the local class field theory. 
We put 
\[
 \tau_{\zeta,\chi,c}=
 \mathrm{Ind}_{L_{\zeta}/K}(\mu_{\zeta}^{-1}\xi_{\zeta,\chi,c}). 
\] 
Then $\tau_{\zeta,\chi,c}$ is an irreducible representation 
of exponential Swan conductor one 
by \cite[1.3 Lemma]{BHLepi}. 

Let $\mathrm{LL}$ and $\mathrm{JL}$ denote the local 
Langlands correspondence and 
the local Jacquet-Langlands correspondence 
for $\mathit{GL}_n(K)$ respectively. 

\begin{prop}\label{expJL}
We have 
$\mathrm{LL}(\pi_{\zeta,\chi,c})=\tau_{\zeta,\chi,c}$ and 
$\mathrm{JL} (\rho_{\zeta, \chi,c}) =\pi_{\zeta,\chi,c}$. 
\end{prop}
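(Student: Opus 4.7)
The plan is to identify $\pi_{\zeta,\chi,c}$ and $\rho_{\zeta,\chi,c}$ with the explicit models of essentially tame supercuspidal representations of $\iGL_n(K)$ and $D^{\times}$ given by Bushnell--Henniart, and then invoke their explicit formulas for the local Langlands correspondence (LLC) and the local Jacquet--Langlands correspondence (LJLC). Since $n$ is prime to $p$, every simple epipelagic representation is essentially tame, and its Weil parameter is induced from a character of $W_{L}$ for a totally tamely ramified extension $L/K$ of degree $n$.

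First, I would check that $(L_{\zeta}, \xi_{\zeta,\chi,c})$ is an admissible pair in the sense of Bushnell--Henniart: $L_{\zeta}/K$ is totally tamely ramified of degree $n$, and the restriction of $\xi_{\zeta,\chi,c}$ to $U_{L_\zeta}^1$ is trivial while its restriction to $\mu_{q-1}(K)$ is $\chi$, which together with the requirement that $L_\zeta$ is the smallest subfield over which such a character is defined (cf.\ the simple epipelagic constraint) ensures admissibility in the sense of \cite{BHestI}. Then I would show that $\pi_{\zeta,\chi,c}$ is isomorphic to the Bushnell--Henniart essentially tame supercuspidal $\pi_{(L_\zeta,\xi_{\zeta,\chi,c})}$; this reduces to matching the compact induction data, because $L_\zeta^\times U_{\fI}^1$ coincides with the normalizer of the minimal simple stratum attached to $\varphi_{M,\zeta}^{-1}$ and the character $\Lambda_{\zeta,\chi,c}$ agrees with the Howe-type factorization given by $\xi_{\zeta,\chi,c}$ composed with the standard $\psi_K \circ \tr$ formula on $U_{\fI}^1$.

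Once this identification is made, the essentially tame LLC of \cite{BHestI,BHestII,BHestIII} gives
\[
 \mathrm{LL}(\pi_{(L_\zeta,\xi_{\zeta,\chi,c})}) = \Ind_{L_\zeta/K}(\nu^{-1}\xi_{\zeta,\chi,c}),
\]
where $\nu$ is the Bushnell--Henniart rectifier. The main task is to verify that $\nu = \mu_\zeta$ as defined in \eqref{mud}. Since $L_\zeta/K$ is totally tamely ramified of prime-to-$p$ degree, the rectifier factors through $L_\zeta^\times/U_{L_\zeta}^1$, and on $K^\times$ it equals $\delta_{L_\zeta/K}$ (this is the tame/totally ramified specialization of the $t$-factor computation; it matches Lemma \ref{delres} after checking on $\cO_K^\times$ and on $\varpi$ via Lemma \ref{deltame}). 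On the uniformizer $\varphi_\zeta$, the Bushnell--Henniart formula reduces to the Langlands constant $\lambda_{L_\zeta/K}(\psi_K)$, giving exactly $\mu_\zeta(\varphi_\zeta)$. This identifies $\Ind_{L_\zeta/K}(\mu_\zeta^{-1}\xi_{\zeta,\chi,c}) = \tau_{\zeta,\chi,c}$, proving the LLC assertion.

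For the LJLC, I would follow \cite{BHestJL}: in the essentially tame case the Jacquet--Langlands transfer is realized by a very similar admissible-pair construction on $D^\times$, with the internal character twist on $\varphi$ involving the sign $(-1)^{n-1}$, which is precisely what distinguishes $\theta_{\zeta,\chi,c}$ from $\Lambda_{\zeta,\chi,c}$ on $\varphi$. After verifying that $\rho_{\zeta,\chi,c}$ is the Bushnell--Henniart model, their explicit JL correspondence yields $\mathrm{JL}(\rho_{\zeta,\chi,c}) = \pi_{\zeta,\chi,c}$ directly. The main obstacle in the whole argument is pinning down the rectifier $\mu_\zeta$: the general formula of Bushnell--Henniart is a product over intermediate subfields and involves $\lambda$-constants, so the work lies in showing that in the totally ramified prime-to-$p$ situation, the entire rectifier collapses to the three-line description in \eqref{mud}. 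This is essentially a tame/totally-ramified case analysis of the ``finite'' and ``wild'' pieces of the rectifier, using that $L_\zeta/K$ is totally ramified so the finite part reduces to $\delta_{L_\zeta/K}$ and the wild part is trivial.
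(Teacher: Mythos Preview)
Your approach is correct and is essentially the same as the paper's: both invoke the explicit Bushnell--Henniart description of the LLC and LJLC for essentially tame (here, totally ramified) representations. The paper's own proof is a one-line citation of \cite[Theorem~2.1]{BHestII} and \cite[Theorem~5.3]{BHestJL}; what you have written is a faithful unpacking of why those citations apply.

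One remark: you present the identification $\nu = \mu_\zeta$ of the rectifier as ``the main task'' requiring a case analysis of finite and wild pieces from the general theory \cite{BHestI,BHestIII}. In fact the paper has arranged things so that no such analysis is needed. Since $L_\zeta/K$ is \emph{totally} tamely ramified, the relevant reference is \cite{BHestII}, whose Theorem~2.1 gives the rectifier explicitly in this case (trivial on $U_{L_\zeta}^1$, equal to $\delta_{L_\zeta/K}$ on $K^\times$, and sending a uniformizer to $\lambda_{L_\zeta/K}(\psi_K)$). The definition of $\mu_\zeta$ in \eqref{mud} is tailored to match this formula, so the citation is immediate. Similarly, the sign $(-1)^{n-1}$ in $\theta_{\zeta,\chi,c}(\varphi_{D,\zeta})$ is exactly the discrepancy predicted by \cite[Theorem~5.3]{BHestJL} in the totally ramified case, so again the match is direct rather than requiring a separate verification.
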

\begin{proof}
This follows from \cite[Theorem 2.1]{BHestII} 
and \cite[Theorem 5.3]{BHestJL}. 
\end{proof}

\begin{rem}
Explicit descriptions of 
the local Langlands correspondence and 
the local Jacquet-Langlands correspondence 
for simple supercuspidal representations of 
$\mathit{GL}_n(K)$ are obtained in 
\cite{ITlgsw1} and \cite{ITsimpJL} 
also in non-essentially tame cases. 
\end{rem}

\begin{defn}
We say that a smooth irreducible supercuspidal representation 
of $\iGL_n (K)$ is essentially simple supercuspidal 
if it is a character twist of a simple supercuspidal representation. 
We apply the same definition to 
a smooth irreducible representation of $D^{\times}$. 
\end{defn}
Let $\omega \colon K^{\times} \to \ol{\bQ}_{\ell}^{\times}$ 
be a smooth character. 
We put 
\[
 \pi_{\zeta,\chi,c,\omega} =\pi_{\zeta,\chi,c} \otimes (\omega \circ \det), 
 \quad 
 \rho_{\zeta,\chi,c,\omega} =\rho_{\zeta,\chi,c} \otimes (\omega \circ \Nrd_{D/K}), 
 \quad 
 \tau_{\zeta,\chi,c,\omega} =\tau_{\zeta,\chi,c} \otimes 
 (\omega \circ \Art_K^{-1}), 
\]
and 
\begin{align*}
 \Lambda_{\zeta,\chi,c,\omega} &=\Lambda_{\zeta,\chi,c} 
 \otimes (\omega \circ \det |_{L_{\zeta}^{\times} U_{\fI}^1}), 
 \quad
 \theta_{\zeta,\chi,c,\omega} =\theta_{\zeta,\chi,c} 
 \otimes (\omega \circ \Nrd_{D/K}|_{L_{\zeta}^{\times} U_D^1}), 
 \\
 \xi_{\zeta,\chi,c,\omega} &=\xi_{\zeta,\chi,c} \otimes 
 (\omega \circ \Nr_{L/K} \circ \Art_L^{-1}). 
\end{align*}
Then we have 
\[
 \pi_{\zeta, \chi,c,\omega} = 
 \mathrm{c\mathchar`-Ind}_{L_{\zeta}^{\times} U_{\fI}^1}^{\iGL_n(K)} 
 \Lambda_{\zeta, \chi,c,\omega}, \quad 
 \rho_{\zeta, \chi,c,\omega} = 
 \mathrm{Ind}_{L_{\zeta}^{\times} U_D ^1}^{D^{\times}} 
 \theta_{\zeta, \chi,c,\omega}, \quad 
 \tau_{\zeta,\chi,c,\omega}=
 \mathrm{Ind}_{L_{\zeta}/K}(\mu_{\zeta}^{-1}\xi_{\zeta,\chi,c,\omega}). 
\]

\begin{cor}\label{expJLes}
We have 
$\mathrm{LL}(\pi_{\zeta,\chi,c,\omega})=\tau_{\zeta,\chi,c,\omega}$ and 
$\mathrm{JL} (\rho_{\zeta, \chi,c,\omega}) =\pi_{\zeta,\chi,c,\omega}$. 
\end{cor}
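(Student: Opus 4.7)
The plan is to deduce this corollary from Proposition \ref{expJL} by invoking the compatibility of the local Langlands and local Jacquet-Langlands correspondences with character twists coming from smooth characters of $K^{\times}$. By construction, the three families $\pi_{\zeta,\chi,c,\omega}$, $\rho_{\zeta,\chi,c,\omega}$, $\tau_{\zeta,\chi,c,\omega}$ are defined precisely as twists of the untwisted objects, so the statement is essentially formal once those compatibilities are spelled out.

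First I would record the twist compatibilities. For $\mathrm{LL}$, one has
\[
 \mathrm{LL}\bigl( \pi \otimes (\omega \circ \det) \bigr) =
 \mathrm{LL}(\pi) \otimes (\omega \circ \Art_K^{-1}),
\]
and for $\mathrm{JL}$, one has
\[
 \mathrm{JL}\bigl( \rho \otimes (\omega \circ \Nrd_{D/K}) \bigr) =
 \mathrm{JL}(\rho) \otimes (\omega \circ \det).
\]
These are standard properties built into the characterization of both correspondences. So, applying them to Proposition \ref{expJL}, I get immediately
\[
 \mathrm{LL}(\pi_{\zeta,\chi,c,\omega}) = \tau_{\zeta,\chi,c} \otimes (\omega \circ \Art_K^{-1}), \quad
 \mathrm{JL}(\rho_{\zeta,\chi,c,\omega}) = \pi_{\zeta,\chi,c} \otimes (\omega \circ \det).
\]
The second of these is already $\pi_{\zeta,\chi,c,\omega}$ by definition, so that part is done.

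For the first equality the only thing left is to identify
$\tau_{\zeta,\chi,c} \otimes (\omega \circ \Art_K^{-1})$ with $\tau_{\zeta,\chi,c,\omega}$.
Using the projection formula for induction from $W_{L_{\zeta}}$ to $W_K$, together with the fact that, under the local class field theory isomorphism
$\Art_{L_\zeta}\colon L_{\zeta}^{\times}\xrightarrow{\sim} W_{L_\zeta}^{\rmab}$ and
$\Art_K\colon K^{\times}\xrightarrow{\sim} W_K^{\rmab}$,
restriction to $W_{L_\zeta}$ corresponds to the norm $\Nr_{L_\zeta/K}$, I obtain
\[
 \mathrm{Ind}_{L_{\zeta}/K}\bigl( \mu_{\zeta}^{-1}\xi_{\zeta,\chi,c} \cdot
 (\omega \circ \Nr_{L_\zeta/K} \circ \Art_{L_\zeta}^{-1}) \bigr)
 = \tau_{\zeta,\chi,c} \otimes (\omega \circ \Art_K^{-1}).
\]
Comparing with the definition of $\xi_{\zeta,\chi,c,\omega}$ and of $\tau_{\zeta,\chi,c,\omega}$ gives the desired identification.

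There is no real obstacle here: every step is a direct consequence of the defining properties of $\mathrm{LL}$ and $\mathrm{JL}$ together with the projection formula for induced representations. The only bookkeeping is to verify that the definitions of $\pi_{\zeta,\chi,c,\omega}$, $\rho_{\zeta,\chi,c,\omega}$, $\tau_{\zeta,\chi,c,\omega}$ given in terms of twisted $\Lambda$, $\theta$, $\xi$ really coincide with the external twists by $\omega \circ \det$, $\omega \circ \Nrd_{D/K}$, $\omega \circ \Art_K^{-1}$; this is straightforward from the explicit compact-induction / induction descriptions.
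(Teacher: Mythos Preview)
Your proposal is correct and follows the same approach as the paper: the paper's proof is the one-line observation that the corollary follows from Proposition \ref{expJL} because $\mathrm{LL}$ and $\mathrm{JL}$ are compatible with character twists. You have simply unpacked this compatibility (and the projection formula identifying the twist of $\tau_{\zeta,\chi,c}$ with $\tau_{\zeta,\chi,c,\omega}$) in more detail than the paper does.
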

\begin{proof}
This follows from Proposition \ref{expJL}, 
because $\mathrm{LL}$ and $\mathrm{JL}$ are compatible with character twists. 
\end{proof}

\subsection{Realization}\label{subsecReal}
Let $L$ be a 
totally ramified extension of $K$ of degree $n$ in $\bfC$. 
We choose $\varpi_L$ in Subsection \ref{ConstAff} so that 
$\varpi_L \varpi^{-1} \in \mu_{q-1}(K)$. 
We put $\zeta_L =\varpi_L \varpi^{-1}$. 
By abuse of notation, 
we simply write $L$ for 
the images of the embeddings 
$i_{M,\xi_L}$ and $i_{D,\xi_L}$. 
We choose an isomorphism 
$\iota \colon \overline{\mathbb{Q}}_{\ell} \simeq \mathbb{C}$. 
Let $q^{1/2} \in  \overline{\mathbb{Q}}_{\ell}$ be the 
square root of $q$ such that 
$\iota(q^{1/2})>0$. 
We put 
\[
 \Pi_{\fX^L} =H_{\mathrm{c}}^{n-1}(\overline{\fX^L})((n-1)/2) 
 \quad \textrm{and} \quad 
 \Pi_L = \mathrm{c\mathchar`-Ind}_{H_L}^G 
 \Pi_{\fX^L}, 
\]
where $((n-1)/2)$ means the twist by 
the unramified character that sends 
a lift of $\mathrm{Frob}_q$ 
to $q^{(1-n)/2}$. 
In the following, we will show that 
this representation realizes the LLC and the LJLC. 

\begin{rem}\label{onlyL}
A different choice of  $\varpi_L$ gives a different $\cX^L$. 
However, it's a translation of the original one under the action of 
$(\iGL_n (K)\times D^{\times})^0$ by Lemma \ref{CMtrans} and 
the construction. Hence, the $G$-representation $\Pi_L$ 
depends only on $L$ and not on $\varpi_L$. 
\end{rem}

For simplicity, we write
$G_1$ and $G_2$ for $\mathit{GL}_n(K)$ and $D^{\times} \times W_K$
respectively, and consider them as 
subgroups of $G$. 
Let $\ol{H_L}$ be the 
image of $H_L$ under the natural projection 
$G \to G_2$. 
We put $H=U_{\fI}^{1,\det =1}$. 
Note that $H=H_L \cap G_1$. 

For $a \in \mu_{q-1} (K)$, 
we define a character 
$\Lambda_{\zeta_L}^a \colon U_{\mathfrak{I}}^1 \to 
 \overline{\mathbb{Q}}_{\ell}^{\times}$ 
by 
$x \mapsto (\psi_K \circ 
 \mathrm{tr})( (a \varphi_{M,L})^{-1}(x-1))$. 

\begin{lem}\label{lf} 
Let $a \in \mu_{q-1} (K)$, 
and $\pi$ be a smooth irreducible supercuspidal representation of 
$\iGL_n (K)$. 
Then, we have 
$\Hom_H ( \Lambda_{\zeta_L}^a,\pi )=0$ if 
$\pi$ is not essentially simple supercuspidal. 
Further, we have 
\[
 \dim\mathrm{Hom}_{H}
  ( \Lambda_{\zeta_L}^a,\pi_{\zeta,\chi,c,\omega} )=
 \begin{cases}
  1\quad  & \textrm{if $a^n \zeta_L=\zeta$},  \\
  0\quad  &  \mathrm{otherwise}.
 \end{cases}
\]
\end{lem}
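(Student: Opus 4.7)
Since $H=U_{\fI}^{1,\det=1}$ lies in the kernel of $\omega\circ\det$, both $\Lambda_{\zeta,\chi,c,\omega}|_H$ and $\pi_{\zeta,\chi,c,\omega}|_H$ are independent of $\omega$, so it suffices to treat simple epipelagic $\pi_{\zeta,\chi,c}$ and separately to rule out contributions from supercuspidals of other type. The determinant identifies $U_{\fI}^1/H$ with $1+\fp$, so the extensions of $\Lambda_{\zeta_L}^a|_H$ to $U_{\fI}^1$ are exactly $\Lambda_{\zeta_L}^a\cdot(\chi'\circ\det)$ for $\chi'\in(1+\fp)^{\vee}$, and Frobenius reciprocity yields
\[
 \Hom_H(\Lambda_{\zeta_L}^a,\pi)\simeq
 \bigoplus_{\chi'\in(1+\fp)^{\vee}}
 \Hom_{U_{\fI}^1}\bigl(\Lambda_{\zeta_L}^a\cdot(\chi'\circ\det),\pi\bigr).
\]
Each test character is trivial on $U_{\fI}^2$, so these are ``level one'' characters of $U_{\fI}^1$.

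For $\pi=\pi_{\zeta,\chi,c}=\mathrm{c\mathchar`-Ind}_{L_{\zeta}^{\times}U_{\fI}^1}^{\iGL_n(K)}\Lambda_{\zeta,\chi,c}$, I would decompose $\pi|_{U_{\fI}^1}$ by Mackey and use the fact that the $\iGL_n(K)$-intertwining of $\Lambda_{\zeta,\chi,c}|_{U_{\fI}^1}$ equals $L_{\zeta}^{\times}U_{\fI}^1$, so only the trivial double coset contributes. Equating $\Lambda_{\zeta_L}^a\cdot(\chi'\circ\det)$ with $\Lambda_{\zeta,\chi,c}|_{U_{\fI}^1}$, after using $\det(1+y)\equiv 1+\tr(y)\pmod{\fp^2}$ for $y\in\fP$ and the trace pairing on $U_{\fI}^1/U_{\fI}^2$, reduces to matching the matrices $(a\varphi_{M,L})^{-1}$ and $\varphi_{M,\zeta}^{-1}$ up to a scalar summand (absorbed uniquely by $\chi'$). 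Since $(a\varphi_{M,L})^n=a^n\zeta_L\varpi$ and $\varphi_{M,\zeta}^n=\zeta\varpi$, this is possible precisely when $a^n\zeta_L=\zeta$, and then the matching $\chi'$ is unique, giving dimension one.

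If $\pi$ is supercuspidal but not essentially simple epipelagic, any nonzero summand on the right would force a level-one simple character to occur in $\pi|_{U_{\fI}^1}$; by the Bushnell-Kutzko/Bushnell-Henniart classification recalled in Subsection~\ref{ExpCorr}, this would force $\pi$ itself to be essentially simple epipelagic, a contradiction, so $\Hom_H(\Lambda_{\zeta_L}^a,\pi)=0$. The main obstacle is the intertwining input underlying the Mackey reduction: namely that the simple character $\Lambda_{\zeta,\chi,c}|_{U_{\fI}^1}$ has $\iGL_n(K)$-intertwining exactly $L_{\zeta}^{\times}U_{\fI}^1$, and conversely that it cannot occur in any supercuspidal of different depth. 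Both are standard consequences of simple strata theory (\cite{BHestI}--\cite{BHestIII}) but require careful bookkeeping in this specific trace-pairing setup to extract the precise condition $a^n\zeta_L=\zeta$.
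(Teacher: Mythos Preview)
Your route diverges from the paper's and, as written, has a real gap in the Mackey step. The paper does \emph{not} restrict to $U_{\fI}^1$ and run Mackey; instead it extends $\Lambda_{\zeta_L}^a|_H$ to $K^{\times}U_{\fI}^1$ using the central character of $\pi$ (after a tame twist, possible since $x\mapsto x^n$ is bijective on $U_K^1$), then to $L^{\times}U_{\fI}^1$, and applies Frobenius reciprocity all the way up to $G_1$. One lands on a finite sum
\[
 \Hom_H(\Lambda_{\zeta_L}^a,\pi)\ \simeq\ \bigoplus_{c'^n=\omega_{\pi}(\varpi_L)}\Hom_{G_1}(\pi^a_{\zeta_L,\chi',c'},\pi),
\]
and the condition $a^n\zeta_L=\zeta$ comes from the classification isomorphism $\pi^a_{\zeta_L,\chi',c'}\simeq \pi_{a^n\zeta_L,\chi',c'}$ of \cite[2.2]{BHLepi}, not from equating characters on $U_{\fI}^1$.

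In your argument the crucial claim ``only the trivial double coset contributes, and equating $\Lambda_{\zeta_L}^a\cdot(\chi'\circ\det)$ with $\Lambda_{\zeta,\chi,c}|_{U_{\fI}^1}$ yields $a^n\zeta_L=\zeta$'' is where things break. On the trivial double coset one must have equality of characters of $U_{\fI}^1/U_{\fI}^2$, i.e.\ $(a\varphi_{M,L})^{-1}\equiv \varphi_{M,\zeta}^{-1}+bI\pmod{\fI}$ for some scalar $b$. Writing both inverses explicitly, the sub-diagonal block of the difference is $(a^{-1}-1)I_{n-1}$, which a diagonal scalar cannot cancel; so the trivial coset forces $a=1$ and $\zeta=\zeta_L$, not $a^n\zeta_L=\zeta$. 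The condition $a^n\zeta_L=\zeta$ is really the condition that the two characters are \emph{conjugate} in $\iGL_n(K)$ (e.g.\ by $\mathrm{diag}(1,a,\dots,a^{n-1})$), so when $a\ne 1$ the contribution comes from a non-trivial double coset, contrary to what you asserted. Relatedly, your assertion that ``each test character is trivial on $U_{\fI}^2$'' is false for general $\chi'\in(1+\fp)^{\vee}$, since $\det\colon U_{\fI}^2\to 1+\fp$ is already surjective; one first needs the central-character argument (only $\chi'$ with $\chi'^{\,n}=1$ on $1+\fp$ can contribute, hence $\chi'=1$) before that reduction is valid. These two points are exactly what the paper's passage through $K^{\times}U_{\fI}^1$ and then to $G_1$ handles cleanly; if you want to salvage the Mackey approach you must identify the contributing double coset explicitly and redo the character comparison there.
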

\begin{proof}
Let $\omega_{\pi}$ denote the central character of $\pi$. 
Twisting $\pi$ by a character, we may assume that 
$\omega_{\pi}$ is tame, 
since $U_K^1 \to U_K^1 \colon x \mapsto x^n$ is an isomorphism. 
Let 
$\Lambda_{\zeta_L,\omega_{\pi}}^a \colon 
 K^{\times}U^1_{\mathfrak{I}} \to \ol{\bQ}_{\ell}^{\times}$ 
be the character such that 
\[ 
 \Lambda_{\zeta_L,\omega_{\pi}}^a (x)=\omega_{\pi} (x) \quad 
 \textrm{for $x \in K^{\times}$}, \quad 
 \Lambda_{\zeta_L,\omega_{\pi}}^a|_{U^1_\fI}=\Lambda_{\zeta_L}^a. 
\]
Then we have 
\begin{equation}\label{LamaHom}
 \mathrm{Hom}_{H} ( \Lambda_{\zeta_L}^a,\pi ) 
 \simeq 
 \mathrm{Hom}_{K^{\times}U^1_{\fI}} 
 ( \Lambda_{\zeta_L,\omega_{\pi}}^a, \pi ) 
 \simeq 
 \mathrm{Hom}_{L^{\times} U_{\mathfrak{I}}^1}
 \bigl( \mathrm{Ind}_{K^{\times} U_{\mathfrak{I}}^1}
 ^{L^{\times} U_{\mathfrak{I}}^1} 
 \Lambda_{\zeta_L,\omega_{\pi}}^a , 
 \pi \bigr)
\end{equation}
by $K^{\times}H=K^{\times }U_{\mathfrak{I}}^1$ and 
the Frobenius reciprocity. 
We take $\chi' \in (k^{\times})^{\vee}$ so that 
$\chi'(\bar{x})=\omega_{\pi} (x)$ for $x \in \mu_{q-1} (K)$. 
For $c' \in \ol{\bQ}_{\ell}^{\times}$, 
we define the character $\Lambda^a_{\zeta_L,\chi',c'} \colon L^{\times}U_{\mathfrak{I}}^1
\to \overline{\mathbb{Q}}_{\ell}^{\times}$
by 
\[
 \Lambda^a_{\zeta_L,\chi',c'} (\varphi_{M,L})=c', \quad
 \Lambda^a_{\zeta_L,\chi',c'} (x)=\chi'(\bar{x}) \quad
 \textrm{ for $x \in \mu_{q-1}(K)$}, \quad 
 \Lambda^a_{\zeta_L,\chi',c'} |_{U_{\mathfrak{I}}^1}=\Lambda_{\zeta_L}^a, 
\]
and set $\pi^a_{\zeta_L,\chi',c'}= 
\mathrm{c\mathchar`-Ind}_{L^{\times} U_{\mathfrak{I}}^1}^{G_1} 
 \Lambda^a_{\zeta_L,\chi',c'}$.
Then, we have 
\begin{equation}\label{KULU}
 \mathrm{Ind}_{K^{\times} U_{\mathfrak{I}}^1}^{L^{\times} 
 U_{\mathfrak{I}}^1} 
 \Lambda_{\zeta_L,\omega_{\pi}}^a \simeq 
 \bigoplus_{c'^n=\omega_{\pi} (\varpi_L )}\Lambda^a_{\zeta_L,\chi',c'}. 
\end{equation}
By \eqref{LamaHom}, \eqref{KULU} and the Frobenius reciprocity, 
we have 
\[
 \mathrm{Hom}_{H} ( \Lambda_{\zeta_L}^a,\pi ) \simeq 
 \bigoplus_{c'^n=\omega_{\pi} (\varpi_L )} 
 \mathrm{Hom}_{G_1} ( \pi^a_{\zeta_L,\chi',c'},\pi ) 
 \simeq 
 \bigoplus_{c'^n=\omega_{\pi} (\varpi_L )} 
 \mathrm{Hom}_{G_1} ( \pi_{a^n \zeta_L,\chi',c'},\pi ) 
\]
because 
$\pi^a_{\zeta_L,\chi',c'} \simeq \pi_{a^n \zeta_L,\chi',c'}$ 
by \cite[2.2 Lemma and Proposition]{BHLepi}. 
Hence, we have the first claim. 
To show the second claim, we may assume that $\omega =1$. 
Then, the claim follows from the above arguments. 
\end{proof}
\begin{prop}\label{ky}
Assume that $p$ is odd. 
Then, we have 
\begin{equation}\label{ky2}
 \lambda_{L/K}(\psi_K) 
 =
 \mathfrak{g}(\nu_{n-1},\psi ) q^{-\frac{n-1}{2}}, 
\end{equation}
where we use notations introduced in \eqref{qg} and 
Subsection \ref{ExpCorr}. 
\end{prop}
\noindent
We give a proof of Proposition \ref{ky} 
in the next subsection. 
\begin{lem}\label{pro} 
{\rm 1.} 
Let $\pi$ be a smooth irreducible supercuspidal representation of 
$\iGL_n (K)$. 
Then, we have 
$\Hom_H ( \Pi_{\fX^L} ,\pi )=0$ if 
$\pi$ is not essentially simple supercuspidal. 
Further, we have 
\begin{equation}\label{n_qq}
 \dim\mathrm{Hom}_{H}
 ( \Pi_{\fX^L},\pi_{\zeta,\chi,c,\omega} )=
 \begin{cases}
 n_q \quad  &\textrm{if $\zeta_L \zeta^{-1} \in \mu_{(q-1)/n_q} (K)$},\\
 0 \quad &  \textrm{otherwise}.
 \end{cases} 
\end{equation}
{\rm 2.} 
We have $L^{\times}U_D^1 \times 
W_{L} \subset \ol{H_L}$ 
and an injective homomorphism 
\[
 \theta_{\zeta_L,\chi,c,\omega} \otimes 
 \mu_{\zeta_L}^{-1}\xi_{\zeta_L,\chi,c,\omega} \hookrightarrow
 \mathrm{Hom}_H 
 ( \Pi_{\fX^L},\pi_{\zeta_L,\chi,c,\omega} )
\]
as $L^{\times}U_D^1 \times W_{L}$-representations.  
\end{lem}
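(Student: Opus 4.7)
The plan for Part 1 is to first decompose $\Pi_{\fX^L}$ as an $H$-representation. By Proposition \ref{ga}, $H$ acts on $\ol{\fX^L}$ by $\bom{z} \mapsto \bom{z} + r_L(g)$ while fixing the $\bom{y}_i$, so the action factors through $r_L\colon H \to k$. Combined with the Artin-Schreier cohomology calculations of Propositions \ref{cg1}.1 and \ref{HX2}.1, the decomposition of $H^{n-1}_{\mathrm{c}}(\ol{\fX^L})$ along additive characters of $k$ pulls back along $r_L$ to give
\[
 \Pi_{\fX^L}|_H \simeq \bigoplus_{a \in \mu_{q-1}(K)} \Lambda_{\zeta_L}^a,
\]
since the characters $x \mapsto \psi(\bar{a}^{-1}x)$ exhaust the non-trivial additive characters of $k$ as $\bar{a}$ ranges over $k^\times$. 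Applying Lemma \ref{lf} summand-by-summand then yields the vanishing statement for non-essentially-simple-epipelagic $\pi$ and reduces \eqref{n_qq} to counting $a \in \mu_{q-1}(K)$ with $a^n\zeta_L=\zeta$, which equals $n_q$ when $\zeta_L\zeta^{-1} \in \mu_{(q-1)/n_q}(K)$ and is zero otherwise.

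For the containment $L^{\times}U_D^1 \times W_L \subset \ol{H_L}$ in Part 2, I assemble three families of lifts to $H_L$: Lemma \ref{jLsig} gives $(1,a_\sigma^{-1},\sigma) \in \cS_L$ for $\sigma \in W_L$; the case $\sigma=1$ yields $(l,l,1) \in \cS_L$ for $l \in L^\times$, projecting to $(l,1)$; and $(1,u,1) \in H_L$ for $u \in U_D^{1,\Nrd=1}$. Because $L/K$ is tame, $\mathrm{Nr}_{L/K}\colon U_L^1 \twoheadrightarrow U_K^1$, so $L^{\times}U_D^{1,\Nrd=1} = L^{\times}U_D^1$, and combining the three families produces the desired subgroup.

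For the injective map of representations, I single out the $a=1$ summand of the decomposition above, which is one-dimensional by Lemma \ref{lf}, and verify that it is stabilized by $L^{\times}U_D^1 \times W_L$ while computing the resulting character piece by piece. For $u \in U_D^{1,\Nrd=1}$, the lift $(1,u,1)$ acts on $\ol{\fX^L}$ by Proposition \ref{da}, contributing $\psi_K(\mathrm{Trd}_{D/K}(\varphi_{D,L}^{-1}(u-1)))$ on the $\Lambda_{\zeta_L}$-eigenspace, matching $\theta_{\zeta_L,\chi,c,\omega}|_{U_D^{1,\Nrd=1}}$. For $l \in L^\times$, the lift $(l,l,1)$ acts trivially on $\Pi_{\fX^L}$ by Remark \ref{Ktri}, so the induced action on $\mathrm{Hom}_H$ descends to left multiplication by $l$ on $\pi_{\zeta_L,\chi,c,\omega}$; on the $\Lambda_{\zeta_L}$-part this produces $\xi_{\zeta_L,\chi,c,\omega}(l)$, which together with \eqref{mud} and Lemma \ref{delres} accounts for the $L^\times$-component of $\mu_{\zeta_L}^{-1}\xi_{\zeta_L,\chi,c,\omega}$ after absorbing the $\delta_{L/K}$ contribution. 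For $\sigma \in W_L$, Proposition \ref{LT} describes the action as $\bom{z} \mapsto \bom{z}^{q^{n_\sigma}}$ and $\bom{y}_i \mapsto \ol{u}_\sigma^{(q-1)/2}\bom{y}_i^{q^{n_\sigma}}$; on the $\psi$-isotype with its $(n-1)/2$-Tate twist, Propositions \ref{cg1}.2--3 and \ref{HX2}.2 give the eigenvalue $\mathfrak{g}(\nu_{n-1},\psi)^{n_\sigma}q^{-n_\sigma(n-1)/2}$ together with a quadratic-residue sign in $\ol{u}_\sigma$.

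The hard step is matching this $W_L$-character against $\mu_{\zeta_L}^{-1}\xi_{\zeta_L,\chi,c,\omega}(a_\sigma)$. On the Galois side, writing $a_\sigma = \varphi_L^{n_\sigma}u_\sigma$ and using \eqref{mud} gives $\mu_{\zeta_L}(a_\sigma) = \lambda_{L/K}(\psi_K)^{n_\sigma}\delta_{L/K}(\ol{u}_\sigma)$, and Lemma \ref{delres} identifies $\delta_{L/K}(\ol{u}_\sigma)$ with the expected quadratic symbol. Proposition \ref{ky} then supplies the decisive identity $\lambda_{L/K}(\psi_K) = \mathfrak{g}(\nu_{n-1},\psi)q^{-(n-1)/2}$, which is precisely what forces the geometric Frobenius eigenvalue on cohomology to coincide with the Langlands constant appearing in $\mu_{\zeta_L}$. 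This matching at Frobenius, enabled by Proposition \ref{ky}, is the technical heart of the argument.
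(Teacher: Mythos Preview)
Your Part~1 is fine and matches the paper: the decomposition $\Pi_{\fX^L}|_H \simeq \bigoplus_{a\in\mu_{q-1}(K)}\Lambda_{\zeta_L}^a$ via Propositions~\ref{ga}, \ref{cg1}.1, \ref{HX2}.1 followed by Lemma~\ref{lf} is exactly how the paper proceeds.

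Part~2 contains a genuine gap. You write that for $l\in L^\times$ the lift $(l,l,1)$ acts trivially on $\Pi_{\fX^L}$ ``by Remark~\ref{Ktri}'', but Remark~\ref{Ktri} only covers $a\in K^\times$, not $l\in L^\times$. For $l=\varphi_L$ the lift is precisely $\mathbf{g}_L=(\varphi_{M,L},\varphi_{D,L},1)$, and Proposition~\ref{frob} shows it acts nontrivially on $\ol{\fX^L}$. On the $\psi$-component of $\Pi_{\fX^L}$ it acts by $\det(g_L)=(-1)^{n-1}$ when $p$ is odd (Proposition~\ref{cg1}.3 together with Proposition~\ref{frob}.2, since the constant term of $(T^n-1)/(T-1)$ is $1$), and trivially, i.e.\ again by $(-1)^{n-1}=1$, when $p=2$ (Proposition~\ref{HX2}.3, using that $n$ is odd). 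This sign is not optional: it is exactly what converts $\xi_{\zeta_L,\chi,c,\omega}(\varphi_L)=c\,\omega((-1)^{n-1}\varpi_L)$ on the $\pi$-side into $\theta_{\zeta_L,\chi,c,\omega}(\varphi_{D,L})=(-1)^{n-1}c\,\omega((-1)^{n-1}\varpi_L)$, the Jacquet--Langlands sign. Your computation, taking the $\Pi_{\fX^L}$-action as trivial, would land on $\xi$ rather than $\theta$ on the $D^\times$-factor.

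Relatedly, your sentence ``on the $\Lambda_{\zeta_L}$-part this produces $\xi_{\zeta_L,\chi,c,\omega}(l)$, which \ldots\ accounts for the $L^\times$-component of $\mu_{\zeta_L}^{-1}\xi_{\zeta_L,\chi,c,\omega}$'' conflates the two tensor factors: the element $(l,1)\in L^\times U_D^1\times W_L$ must act by $\theta_{\zeta_L,\chi,c,\omega}(l)$, not by $\mu_{\zeta_L}^{-1}\xi_{\zeta_L,\chi,c,\omega}$. The paper avoids this by checking generators separately: $(\varphi_{D,L},1)$ via the lift $\mathbf{g}_L$ as above; $(zd,1)$ with $z\in\mu_{q-1}(K)$, $d\in U_D^1$ via the lift $(zu,zd,1)$ where $u^n=\Nrd_{D/K}(d)$; and $(a_\sigma^{-1},\sigma)$ with $n_\sigma=1$ via Proposition~\ref{LT}. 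In the last case note also that Proposition~\ref{cg1}.2 gives $\mathrm{Frob}_q$ acting by $(-1)^{n-1}\mathfrak{g}(\nu_{n-1},\psi)$, and that the action on $\Hom_H(\Pi_{\fX^L},\pi)$ involves the \emph{inverse} of the eigenvalue on $\Pi_{\fX^L}$; this is why the paper's formula \eqref{sigsc} has $\mathfrak{g}(\nu_{n-1},\psi)^{-1}q^{(n-1)/2}$ rather than your $\mathfrak{g}(\nu_{n-1},\psi)q^{-(n-1)/2}$, and why the $(-1)^{n-1}$ again appears and must be matched.
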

\begin{proof}
By Proposition \ref{ga}, 
Proposition \ref{cg1}.1 
and Proposition \ref{HX2}.1, 
we have 
\begin{equation}\label{fn}
 \Pi_{\fX^L} \simeq 
 \bigoplus_{a \in \mu_{q-1} (K)} \Lambda_{\zeta_L}^a
\end{equation}
as $H$-representations. 
Then we have the claim 1 and the isomorphism 
\begin{equation}\label{as}
 \mathrm{Hom}_{H}
 (\Pi_{\fX^L},\pi_{\zeta,\chi,c,\omega} ) 
 \simeq 
 \bigoplus_{a \in \mu_{q-1} (K),\, a^n \zeta_L =\zeta} 
 \mathrm{Hom}_{H} 
 ( \Lambda_{\zeta_L}^a , \pi_{\zeta,\chi,c,\omega} ) 
\end{equation}
by Lemma \ref{lf} and \eqref{fn}. 

We prove the claim 2. 
We show that the subspace 
\begin{equation}\label{sub1}
 \Hom_{H} (\Lambda_{\zeta_L}^1 ,\pi_{\zeta_L,\chi,c,\omega} ) 
 \subset \Hom_{H} (\Pi_{\fX^L},\pi_{\zeta_L,\chi,c,\omega} ) 
\end{equation}
under the decomposition \eqref{as} is isomorphic to 
$\theta_{\zeta_L,\chi,c,\omega} \otimes 
 \mu_{\zeta_L}^{-1}\xi_{\zeta_L,\chi,c,\omega}$ 
by checking the action of generators of 
$L^{\times}U_D^1 \times W_{L}$. 
Since the subspace \eqref{sub1} is one-dimensional, 
we have 
\begin{equation}\label{piLam}
 \Hom_{H} (\Lambda_{\zeta_L}^1 , \Lambda_{\zeta_L,\chi,c,\omega} ) 
 \simeq
 \Hom_{H} (\Lambda_{\zeta_L}^1,\pi_{\zeta_L,\chi,c,\omega} ). 
\end{equation}

The element $(\varphi_{D,L},1) \in L^{\times}U_D^1 \times W_{L}$ 
acts on 
$\theta_{\zeta_L,\chi,c,\omega} \otimes 
 \mu_{\zeta_L}^{-1}\xi_{\zeta_L,\chi,c,\omega}$ 
as multiplication by 
$(-1)^{n-1} \omega ((-1)^{n-1} \varpi_L )c$. 
The element $\mathbf{g}_L \in H_L$ 
is a lift of $(\varphi_{D,L},1)$ under 
$H_L \to \ol{H_L}$. 
By Proposition \ref{frob}.2, 
Proposition \ref{cg1}.3, 
Proposition \ref{HX2}.3 and \eqref{piLam}, 
the element $\mathbf{g}_L$ acts on 
the subspace \eqref{sub1} 
as scalar multiplication by $(-1)^{n-1} \omega ((-1)^{n-1} \varpi_L ) c$. 

Let $zd \in \mathcal{O}_K^{\times}U_D^1$ with $z \in \mu_{q-1} (K)$
and $d \in U_D^1$. 
The element 
$(zd,1) \in L^{\times}U_D^1 \times W_{L}$ 
acts on $\theta_{\zeta_L,\chi,c} 
 \otimes \mu_{\zeta_L}^{-1}\xi_{\zeta_L,\chi,c}$
as scalar multiplication by
$\chi(\bar{z}) \theta_{\zeta_L,\chi,c}(d) \omega (\Nrd_{D/K}(zd))$. 
Since $n$ is prime to $p$, 
we can choose an element 
$u \in U_K^1$ 
such that $u^n =\mathrm{Nrd}_{D/K}(d)$. 
Then, we have $(zu,zd,1) \in H_L$. 
By Remark \ref{Ktri}, Proposition \ref{da}, 
Proposition \ref{cg1}.1 and Proposition \ref{HX2}.1, 
the element $(zu,zd,1)$ 
acts on the subspace \eqref{sub1} 
as multiplication by 
$\chi(\bar{z}) \theta_{\zeta_L,\chi,c} (d) \omega ((zu)^n)$. 

Let $\sigma \in W_L$ such that $n_{\sigma} =1$. 
The element $(a_{\sigma}^{-1},\sigma) \in L^{\times}U_D^1 \times W_{L}$ 
acts on 
$\theta_{\zeta_L,\chi,c,\omega} \otimes 
 \mu_{\zeta_L}^{-1} \xi_{\zeta_L,\chi,c,\omega}$ 
as scalar multiplication by 
$(-1)^{n-1} \mu_{\zeta_L}(a_{\sigma})^{-1}$. 
We note that the 
$\Lambda_{\zeta_L}^1$-component in \eqref{fn} 
corresponds to the 
$\psi$-component in \eqref{HXo}. 
By Proposition \ref{LT}, 
Proposition \ref{cg1} and Proposition \ref{HX2}, 
the element 
$(1,a_{\sigma}^{-1},\sigma) \in H_L$ 
acts on the subspace \eqref{sub1} 
as scalar multiplication by 
\begin{equation}\label{sigsc}
 \begin{cases}
 (-1)^{n-1}
 \bigl( \frac{\ol{u}_{\sigma}}{k} \bigr)^{n-1}
 \mathfrak{g}(\nu_{n-1},\psi )^{-1}q^{\frac{n-1}{2}}
 \quad & \textrm{if $p\neq 2$},\\ 
 (-1)^{n-1}
 \bigl( \frac{q}{n} \bigr) \quad  & \textrm{if $p=2$.}
 \end{cases}
\end{equation}
We have 
$\mu_{\zeta_L}(\varphi_{\zeta_L})=\lambda_{{L}/K}(\psi_K)$
and 
$\mu_{\zeta_L}(x) =\bigl(\frac{\bar{x}}{k}\bigr)^{n-1}$ 
for $x \in \mathcal{O}^{\times}_K$
by Lemma \ref{delres} and \eqref{mud}. 
Hence, \eqref{sigsc} coincides with 
$(-1)^{n-1} \mu_{\zeta_L}(a_{\sigma})^{-1}$ by 
Proposition \ref{ky} and 
\cite[Theorem 2.1(1)]{BHestII}. 
\end{proof}

\begin{lem}\label{Hindex}
We have $[G_2:\overline{H_L}]=n(q^n-1)/\bigl( n_q(q-1) \bigr)$. 
\end{lem}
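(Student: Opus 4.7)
The plan is to compute the index by splitting along the projection $\pi_W\colon G_2 = D^\times \times W_K \to W_K$: writing $N = \overline{H_L} \cap (D^\times \times \{1\})$ and $W = \pi_W(\overline{H_L})$, one has $[G_2 : \overline{H_L}] = [D^\times : N] \cdot [W_K : W]$, and I aim to show these factors equal $(q^n-1)/(q-1)$ and $n/n_q$ respectively.

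To identify $N$: since the factor $U_{\fI}^{1,\det=1} \times U_D^{1,\Nrd=1} \times 1$ projects trivially to $W_K$, a class $(d,1) \in \overline{H_L}$ lifts to $(g,d,1) \in H_L$; the $\sigma = 1$ fiber of $\cS_L$ is $i_{\xi_L}(L^\times) \times \{1\}$ because $j_L(1) = L^\times (1,1)$, forcing $d \in U_D^{1,\Nrd=1} \cdot i_{D,\xi_L}(L^\times)$. Hence $N = L^\times \cdot U_D^{1,\Nrd=1}$ with $L^\times$ embedded via $i_{D,\xi_L}$. To compute $[D^\times : N]$ I use the reduced-norm exact sequence $1 \to D^{\times,1} \to D^\times \xrightarrow{\Nrd} K^\times \to 1$. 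The image $\Nrd(N) = \Nr_{L/K}(L^\times)$ has index $n_q$ in $K^\times$ by local class field theory applied to the maximal abelian subextension $K(\varphi_L^{n/n_q}) \subset L$ of degree $n_q$. The intersection $N \cap D^{\times,1} = L^{\times,\Nr=1} \cdot U_D^{1,\Nrd=1}$, and under the canonical isomorphism $D^{\times,1}/U_D^{1,\Nrd=1} \simeq \ker(\Nr_{\bF_{q^n}/\bF_q}) \simeq \mu_{(q^n-1)/(q-1)}$, the image of $L^{\times,\Nr=1}$ is $\mu_{n_q} \subset \bF_q^\times$ (any $\alpha$ with $\Nr_{L/K}(\alpha) = 1$ satisfies $\bar\alpha^n = 1$ in $\bF_q^\times$, and conversely $\mu_{n_q}(K) \subset L^{\times,\Nr=1}$). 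The resulting internal index is $(q^n-1)/(n_q(q-1))$, yielding $[D^\times : N] = n_q \cdot (q^n-1)/(n_q(q-1)) = (q^n-1)/(q-1)$.

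For $W$: it coincides with the image of $\cS_L$ in $W_K$, since the unit factor of $H_L$ projects trivially to $W_K$. Any $(g,d,\sigma) \in \cS_L$ fixes the CM-by-$L$ point $\xi_L$, and by Lemma \ref{CMtrans} the action of $(\iGL_n(K) \times D^\times)^0$ preserves the CM field; hence the Galois component $\sigma$ must fix $L$ setwise as a subfield of $\bfC$, giving $\sigma \in N_{W_K}(W_L)$. Conversely, Lemma \ref{jLsig} provides $(1, a_\sigma^{-1}, \sigma) \in \cS_L$ for $\sigma \in W_L$, and for $\sigma$ inducing a nontrivial $\tau \in \Aut_K(L)$ one constructs the stabilizing triple $(g_\sigma, d_\sigma, \sigma)$ by conjugating the CM embeddings $i_{M,\xi_L}$ and $i_{D,\xi_L}$ to realize $\tau$, rescaling by $\varphi_{D,L}^{-n_\sigma}$ to land in $G^0$. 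Since $\Aut_K(L)$ consists of the $K$-automorphisms $\varphi_L \mapsto \zeta\varphi_L$ for $\zeta \in \mu_n(L) = \mu_{n_q}(K)$, it has order $n_q$, so $[N_{W_K}(W_L) : W_L] = n_q$ and $[W_K : W] = n/n_q$. Multiplying finally yields $(q^n-1)/(q-1) \cdot n/n_q = n(q^n-1)/(n_q(q-1))$. The main obstacle is this last identification $W = N_{W_K}(W_L)$: the containment $W \subset N_{W_K}(W_L)$ follows cleanly from CM-compatibility, but the reverse inclusion requires explicitly producing stabilizing triples for each nontrivial $\tau \in \Aut_K(L)$, which in turn relies on the tame structure of $L = K(\varphi_L)$ via the $n$-th roots of $\varpi_L$ lying in $L$.
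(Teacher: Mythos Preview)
Your proof is correct and takes a genuinely different route from the paper's. You split $[G_2:\ol{H_L}]$ as $[D^\times:N]\cdot[W_K:W]$ and identify $W=\pi_W(\ol{H_L})$ with $N_{W_K}(W_L)$, of index $n/n_q$ in $W_K$; combined with $[D^\times:N]=(q^n-1)/(q-1)$ this yields the stated index. The paper instead asserts that $\ol{H_L}\to W_K$ is surjective (so the whole index equals $[D^\times:N]$) and then computes $[\cO_D^\times:\cO_L^\times U_D^{1,\Nrd=1}]=n(q^n-1)/\bigl(n_q(q-1)\bigr)$ via the claimed equality $[\cO_K^\times:\Nr_{L/K}(\cO_L^\times)]=n$.

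In fact your CM-compatibility argument shows that the paper's surjectivity assertion fails whenever $L/K$ is non-Galois (i.e.\ $n_q<n$): a triple $(g,d,\sigma)\in\cS_L$ forces $\sigma(L)=L$, so the image in $W_K$ is only $N_{W_K}(W_L)$. Correspondingly, local class field theory gives $[\cO_K^\times:\Nr_{L/K}(\cO_L^\times)]=[L\cap K^{\rmab}:K]=n_q$ rather than $n$, since the maximal abelian subextension of $L/K$ is $K(\varphi_L^{n/n_q})$; feeding this into the paper's exact-sequence comparison yields $[\cO_K^\times U_D^1:\cO_L^\times U_D^{1,\Nrd=1}]=1$, not $n/n_q$. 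These two discrepancies in the paper's argument cancel, so the final index is nonetheless correct---but your decomposition is the one that genuinely locates the factor $n/n_q$ on the Weil-group side. One simplification you can make: the reverse inclusion $N_{W_K}(W_L)\subset W$ needs no explicit construction of $(g_\sigma,d_\sigma)$; once $\sigma(L)=L$, the point $\xi_L^{(1,\varphi^{-n_\sigma},\sigma)}$ again has CM by $L$, and Lemma~\ref{CMtrans} immediately supplies $(g,d)$ carrying it back to $\xi_L$.
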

\begin{proof}
Let $\ol{H}_{W_L}$ be the inverse image of 
$W_{L}$ under the natural surjection 
$\ol{H_L} \to W_{L'}$, 
where $L'$ is defined in Subsection \ref{subsec:Weil}. 
Then we have 
\[
 [G_2:\overline{H_L}] 
 =[D^{\times} \times W_{L} :\overline{H}_{W_L}] [W_K : W_{L'}] 
 =\frac{n [\cO_D^{\times} :\cO_{L}^{\times} U_D^{1,\Nrd =1}]}{n_q} , 
\]
since we have 
$\ol{H}_{W_L} =L^{\times} U_D^{1,\Nrd =1} \times W_{L}$ 
by Lemma \ref{jLsig}. 
Hence, it suffices to show that 
\[
 [\cO_{D}^{\times}  : 
 \cO_{L}^{\times} U_D^{1,\Nrd =1}]= \frac{q^n -1}{q-1}. 
\] 
This follows from 
$\cO_{L}^{\times} U_D^{1,\Nrd =1} =\cO_{L}^{\times} U_D$. 
\end{proof}

\begin{lem}\label{map} 
Let $\pi$ be a smooth irreducible supercuspidal representation of 
$\iGL_n (K)$. 
Then, we have 
$\mathrm{Hom}_{G_1} ( \Pi_L ,\pi )=0$ if 
$\pi$ is not essentially simple supercuspidal. 
Further, we have 
\[
 \mathrm{Hom}_{G_1} ( \Pi_L,\pi_{\zeta,\chi,c,\omega} ) 
 \simeq 
 \begin{cases}
 \rho_{\zeta,\chi,c,\omega} \otimes \tau_{\zeta,\chi,c,\omega} \quad  & 
 \textrm{if $\zeta_L \zeta^{-1} \in \mu_{(q-1)/n_q} (K)$}, \\
 0 \quad & \textrm{otherwise}
 \end{cases}
\]
as $D^{\times} \times W_K$-representations.
\end{lem}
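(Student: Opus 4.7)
The plan is to compute $\mathrm{Hom}_{G_1}(\Pi_L, \pi)$ as a $G_2$-representation via a Mackey--Frobenius identification, then combine Lemma \ref{pro} with an irreducibility argument and a dimension count. Since $G = G_1 \times G_2$, the natural projection to $G_2$ induces a bijection $G_1 \backslash G / H_L \simeq G_2 / \overline{H_L}$, and $H_L \cap G_1 = H$. Mackey's decomposition
\[
 \Pi_L|_{G_1} \cong \bigoplus_{g_2 \in G_2/\overline{H_L}} \mathrm{c\mathchar`-Ind}_H^{G_1}\bigl(\Pi_{\fX^L}|_H\bigr),
\]
combined with Frobenius reciprocity, then yields a natural isomorphism of $G_2$-representations
\[
 \mathrm{Hom}_{G_1}(\Pi_L, \pi) \cong \mathrm{c\mathchar`-Ind}_{\overline{H_L}}^{G_2} V_\pi, \qquad V_\pi := \mathrm{Hom}_H(\Pi_{\fX^L}, \pi),
\]
where $V_\pi$ is viewed as an $\overline{H_L}$-representation via the $H_L$-action on $\Pi_{\fX^L}$, which is trivial on the normal subgroup $H \subset H_L$ and hence descends to $H_L/H \cong \overline{H_L}$.

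By Lemma \ref{pro}.1, $V_\pi$ vanishes unless $\pi$ is essentially simple epipelagic of the form $\pi_{\zeta,\chi,c,\omega}$ with $\zeta_L\zeta^{-1} \in \mu_{(q-1)/n_q}(K)$, which handles the vanishing cases in the statement. In the remaining case $L_\zeta = L$, by Remark \ref{onlyL} we may change $\varpi_L$ so as to arrange $\zeta_L = \zeta$. The $L^\times U_D^1 \times W_L$-equivariant injection furnished by Lemma \ref{pro}.2, followed by the canonical $\overline{H_L}$-equivariant embedding $V_\pi \hookrightarrow \mathrm{c\mathchar`-Ind}_{\overline{H_L}}^{G_2} V_\pi$, yields a nonzero $L^\times U_D^1 \times W_L$-equivariant map
\[
 \theta_{\zeta,\chi,c,\omega} \otimes \mu_\zeta^{-1}\xi_{\zeta,\chi,c,\omega} \longrightarrow \mathrm{Hom}_{G_1}(\Pi_L, \pi_{\zeta,\chi,c,\omega}).
\]
By Frobenius reciprocity together with the induction-in-stages identity
\[
 \rho_{\zeta,\chi,c,\omega} \otimes \tau_{\zeta,\chi,c,\omega} = \mathrm{c\mathchar`-Ind}_{L^\times U_D^1 \times W_L}^{G_2}\bigl(\theta_{\zeta,\chi,c,\omega} \otimes \mu_\zeta^{-1}\xi_{\zeta,\chi,c,\omega}\bigr),
\]
this induces a nonzero $G_2$-equivariant map
\[
 \Psi \colon \rho_{\zeta,\chi,c,\omega} \otimes \tau_{\zeta,\chi,c,\omega} \longrightarrow \mathrm{Hom}_{G_1}(\Pi_L, \pi_{\zeta,\chi,c,\omega}).
\]

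The source of $\Psi$ is irreducible, being an external tensor product of the irreducible simple epipelagic representations $\rho_{\zeta,\chi,c,\omega}$ of $D^\times$ and $\tau_{\zeta,\chi,c,\omega}$ of $W_K$. Hence $\Psi$ is injective; and a dimension count via Lemma \ref{pro}.1 and Lemma \ref{Hindex} gives
\[
 \dim \mathrm{Hom}_{G_1}(\Pi_L, \pi_{\zeta,\chi,c,\omega}) = [G_2 : \overline{H_L}] \cdot n_q = \frac{n(q^n-1)}{q-1} = \dim\rho_{\zeta,\chi,c,\omega} \cdot \dim\tau_{\zeta,\chi,c,\omega},
\]
so $\Psi$ is an isomorphism. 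The main obstacle will be in the first paragraph: setting up the Mackey--Frobenius identification as a $G_2$-equivariant (not merely vector-space) isomorphism requires care because $H_L$ is not a direct product inside $G_1 \times G_2$, and one must verify that the natural $\overline{H_L}$-action on $V_\pi$ is well-defined independently of the chosen lifts. Once that is in place, the rest reduces to the character-level input of Lemma \ref{pro}.2 and dimensional bookkeeping.
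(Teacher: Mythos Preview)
Your argument is correct and follows essentially the same route as the paper: Mackey decomposition for $\Pi_L|_{G_1}$, Frobenius reciprocity, the input from Lemma~\ref{pro} and Lemma~\ref{Hindex}, irreducibility of $\rho_{\zeta,\chi,c,\omega}\otimes\tau_{\zeta,\chi,c,\omega}$, and the matching dimension count. The paper packages the $G_2$-equivariance slightly differently---rather than asserting $\mathrm{Hom}_{G_1}(\Pi_L,\pi)\cong\mathrm{Ind}_{\overline{H_L}}^{G_2}V_\pi$ abstractly, it builds the natural $\overline{H_L}$-map $\mathrm{Hom}_{G_1}(\Pi_L,\pi)\to V_\pi$ from $\Pi_{\fX^L}\to\Pi_L|_{H_L}$, induces to $G_2$, and then deduces bijectivity a posteriori from irreducibility plus the vector-space count---but this is exactly the step you already flag as the obstacle, and your abstract formulation is valid (one phrasing slip: $H$ does \emph{not} act trivially on $\Pi_{\fX^L}$ itself, only on $V_\pi=\mathrm{Hom}_H(\Pi_{\fX^L},\pi)$).
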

\begin{proof}
For $g \in H_L \backslash G/G_1$, we choose an element $\tilde{g} \in G_2$
whose image in $\ol{H_L} \backslash G_2$ corresponds to $g$
under the natural isomorphism 
$H_L \backslash G/G_1 \simeq \ol{H_L} \backslash G_2$. 
We put $H^{\tilde{g}}=\tilde{g}^{-1}H\tilde{g}$. 
Let $\Pi_{\fX^L}^{\tilde{g}}$ denote the representation of 
$H^{\tilde{g}}$ which is the conjugate of $\Pi_{\fX^L}$ 
by $\tilde{g}$.
Then, we have 
\begin{equation}\label{g1}
 \Pi_L|_{G_1} \simeq \bigoplus_{g \in H_L \backslash G/G_1} 
 \mathrm{c\mathchar`-Ind}_{H^{\tilde{g}}}^{G_1} 
 \Pi_{\fX^L}^{\tilde{g}}
 \simeq \bigoplus_{\ol{H_L} \backslash G_2} 
 \mathrm{c\mathchar`-Ind}_{H}^{G_1}\Pi_{\fX^L}
\end{equation}
as $G_1$-representations by the Mackey decomposition, 
since $H^{\tilde{g}}=H$ and 
$\Pi_{\fX^L} \simeq \Pi_{\fX^L}^{\tilde{g}}$ 
as $H$-representations. 
By \eqref{g1}, Lemma \ref{pro} and the Frobenius reciprocity, 
we have the first claim and 
\begin{equation}\label{g2}
 \mathrm{Hom}_{G_1} 
 ( \Pi_L,\pi_{\zeta,\chi,c,\omega} ) \simeq 
 \bigoplus_{\ol{H_L} \backslash G_2} 
 \mathrm{Hom}_{H } (\Pi_{\fX^L}, 
 \pi_{\zeta,\chi,c,\omega} ). 
\end{equation}
On the other hand, the natural morphism 
$\Pi_{\fX^L} \to \Pi_L |_{H_L}$ induces the morphism 
\begin{equation}\label{Pipr}
 \mathrm{Hom}_{G_1} 
 ( \Pi_L,\pi_{\zeta,\chi,c,\omega} ) \to 
 \mathrm{Hom}_{H } (\Pi_{\fX^L}, 
 \pi_{\zeta,\chi,c,\omega} ) 
\end{equation}
as $\ol{H_L}$-representations. 
By the construction, 
\eqref{Pipr} coincides with the projection to 
the component labeled by 
$1 \in \ol{H_L} \backslash G_2$ in \eqref{g2}. 

If $\zeta_L \zeta^{-1} \notin \mu_{(q-1)/n_q} (K)$, 
the claim follows from \eqref{g2} and Lemma \ref{pro}.1. 
Now, assume that $\zeta_L \zeta^{-1} \in \mu_{(q-1)/n_q} (K)$. 
We may assume that $\zeta_L =\zeta$ by Lemma \ref{ttbij} 
and Remark \ref{onlyL}. 
By Lemma \ref{pro} and the Frobenius reciprocity,
we obtain a non-zero map
\begin{equation}\label{vap}
 \mathrm{Ind}_{L^{\times} U_D^1 \times W_{L}}^{\ol{H_L}} 
 \bigl( 
 \theta_{\zeta,\chi,c,\omega} \otimes \mu_{\zeta}^{-1}\xi_{\zeta,\chi,c,\omega} 
 \bigr) 
 \to \mathrm{Hom}_{H }(\Pi_{\fX^L},\pi_{\zeta,\chi,c,\omega}). 
\end{equation}
By applying 
$\mathrm{Ind}_{\ol{H_L}}^{G_2}$ to the map \eqref{vap}, 
we obtain a non-zero map 
\begin{equation}\label{ff}
 \rho_{\zeta,\chi,c,\omega} \otimes \tau_{\zeta,\chi,c,\omega}
 \to \mathrm{Ind}_{\ol{H_L}}^{G_2}\mathrm{Hom}_{H}
 (\Pi_{\fX^L},\pi_{\zeta,\chi,c,\omega}).
\end{equation}
We have $\dim \rho_{\zeta,\chi,c,\omega}=(q^n-1)/(q-1)$ and 
$\dim\tau_{\zeta,\chi,c,\omega}=n$. 
Hence, 
both sides of \eqref{ff} are $n(q^n-1)/(q-1)$-dimensional 
by Lemma \ref{pro}.1 and Lemma \ref{Hindex}. 
Since $\rho_{\zeta,\chi,c,\omega} \otimes \tau_{\zeta,\chi,c,\omega}$ 
is an irreducible representation of $G_2$, 
we see that \eqref{ff} is an isomorphism 
as $G_2$-representations.
On the other hand, by \eqref{Pipr} and the Frobenius reciprocity, 
we have a non-zero map
\begin{equation}\label{tti}
\Hom_{G_1}  (\Pi_L,\pi_{\zeta,\chi,c,\omega} ) \to 
\Ind^{G_2}_{\ol{H_L}} \mathrm{Hom}_{H}
(\Pi_{\fX^L},\pi_{\zeta,\chi,c,\omega} ). 
\end{equation}
We see that \eqref{tti} is an isomorphism, 
since the right hand side 
is an irreducible representation of $G_2$ and
both sides have the same dimension by \eqref{g2}. 
Hence, the claim follows from 
the isomorphisms \eqref{ff} and \eqref{tti}. 
\end{proof}
\begin{thm}\label{Pireal}
Let $\mathrm{LJ}$ be the inverse of $\mathrm{JL}$ in Proposition 
\ref{expJL}.
We set 
$\Pi=\bigoplus_{L \in T(K,n)}\Pi_L$. 
Let $\pi$ be a smooth irreducible supercuspidal representation of 
$\iGL_n (K)$. 
Then, we have 
\[
 \mathrm{Hom}_{\mathit{GL}_n(K)} ( \Pi,\pi ) \simeq 
 \begin{cases}
  \mathrm{LJ}(\pi) \otimes \mathrm{LL}(\pi) & 
  \textrm{if $\pi$ is essentially simple supercuspidal,} \\ 
  0 & \textrm{otherwise} 
 \end{cases}
\]
as $D^{\times} \times W_K$-representations.
\end{thm}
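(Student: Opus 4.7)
The plan is to assemble Lemma \ref{map} over all $L \in T(K,n)$ and then translate the result into Langlands/Jacquet-Langlands data via Corollary \ref{expJLes}. The genuinely hard work—the $H$-isotypic decomposition of $\Pi_{\fX^L}$, the action of $\mathbf{g}_L$ and of $W_L$, and the Gauss sum comparison of Proposition \ref{ky}—has already been absorbed into Lemmas \ref{pro} and \ref{map}, so what remains is essentially an assembly argument, together with a uniqueness argument on the parametrizing set $T(K,n)$.

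First I would dispatch the case when $\pi$ is not essentially simple epipelagic: by the first assertion of Lemma \ref{map}, $\Hom_{G_1}(\Pi_L, \pi) = 0$ for every $L \in T(K,n)$, whence $\Hom_{G_1}(\Pi, \pi) = 0$. For essentially simple epipelagic $\pi \simeq \pi_{\zeta,\chi,c,\omega}$, the selection rule in Lemma \ref{map} picks out those $L \in T(K,n)$ with $\zeta_L \zeta^{-1} \in \mu_{(q-1)/n_q}(K)$. I would invoke the bijection of Lemma \ref{ttbij}, applied to the uniformizer $\zeta \varpi \in \fp_K - \fp_K^2$, to conclude that exactly one $L_0 \in T(K,n)$ satisfies this condition. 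By Remark \ref{onlyL}, $\Pi_{L_0}$ depends only on the extension $L_0$ and not on the choice of uniformizer, so I may adjust $\varpi_{L_0}$ to arrange $\zeta_{L_0} = \zeta$ without loss.

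Then Lemma \ref{map} gives
\[
 \Hom_{G_1}(\Pi, \pi_{\zeta,\chi,c,\omega})
 \simeq
 \Hom_{G_1}(\Pi_{L_0}, \pi_{\zeta,\chi,c,\omega})
 \simeq \rho_{\zeta,\chi,c,\omega} \otimes \tau_{\zeta,\chi,c,\omega}
\]
as $D^\times \times W_K$-representations, and Corollary \ref{expJLes} identifies the right-hand side with $\mathrm{LJ}(\pi) \otimes \mathrm{LL}(\pi)$, which completes the argument. The only step requiring actual care is the uniqueness of $L_0$ in the second paragraph—I must verify that $\mu_{(q-1)/n_q}(K)$ is exactly the ambiguity divided out by Lemma \ref{ttbij}—but apart from this purely combinatorial check I anticipate no further obstacle.
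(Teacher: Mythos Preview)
Your proposal is correct and follows essentially the same route as the paper, which simply cites Lemma \ref{map} and Proposition \ref{expJL} (equivalently Corollary \ref{expJLes}) together with the parametrization of essentially simple epipelagic representations. You have just made explicit the one point the paper leaves implicit, namely that Lemma \ref{ttbij} forces exactly one $L\in T(K,n)$ to satisfy $\zeta_L\zeta^{-1}\in\mu_{(q-1)/n_q}(K)$, so the direct sum over $L$ collapses to a single term.
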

\begin{proof}
This follows from Proposition \ref{expJL} 
and Lemma \ref{map}, because 
every essentially simple supercuspidal representation is 
isomorphic to 
$\pi_{\zeta,\chi,c,\omega}$ 
for some 
$\zeta \in \mu_{q-1} (K)$, $\chi \in (k^{\times})^{\vee}$, 
$c \in \overline{\mathbb{Q}}_{\ell}^{\times}$ and 
a smooth character 
$\omega \colon K^{\times} \to \ol{\bQ}_{\ell}^{\times}$.
\end{proof}

\subsection{Proof of Proposition \ref{ky}}\label{PrProp}
Assume that $p$ is odd. 
Let $E$ be a finite separable extension of $K$. 
Let $\mathfrak{p}_E$ be the maximal ideal of $\mathcal{O}_E$. 
We write $q_E$ 
for the cardinality of $k_E$. 
Let 
$\psi_0^E \colon k_E \to \overline{\mathbb{Q}}_{\ell}^{\times}$ 
be a non-trivial character. 
We put 
\[
 \mathfrak{g}(\psi_0^E)=
 \sum_{x \in k_E^{\times}}\left(\frac{x}{k_E}\right)\psi_0^E (x) \quad 
 \textrm{and} \quad 
 \mathfrak{m}(\psi_0^E)=\mathfrak{g}(\psi_0^E)/q_E^{1/2}. 
\]
Let
$\psi^E \colon E \to \overline{\mathbb{Q}}_{\ell}^{\times}$ 
be a character such that 
$\psi^E (x)= \psi_0^E (\bar{x})$ for $x \in \cO_E$. 
We write 
$\mathfrak{m}(\psi^E)$ for $\mathfrak{m}(\psi_0^E)$. 
Then, we have 
\begin{equation}\label{gr}
 \mathfrak{m}(\psi^E)^2=
 \left(\frac{-1}{k_E}\right)
\end{equation}
by \cite[Sommes trig. 4.4]{DelCoet}.
In particular, 
$\mathfrak{m}(\psi^E)$ is a fourth root of unity. 

By \cite[Proposition 4.5]{BHestII}, we have 
\begin{equation}\label{qua}
 \mathfrak{g}(\nu_{n-1},\psi) = 
 \left(\frac{\det \nu_{n-1}}{q}\right)\mathfrak{g}(\psi)^{n-1}.
\end{equation}
Here, we restate Proposition \ref{ky}. 
\begin{prop}
Assume that $p$ is odd. 
Let $L$ be a totally tamely ramified extension of $K$ 
of degree $n$. 
Then, we have 
\begin{equation}\label{uy}
 \lambda_{L/K}(\psi_K)
 =
 \mathfrak{g}(\nu_{n-1},\psi) q^{-\frac{n-1}{2}}.
\end{equation}
\end{prop}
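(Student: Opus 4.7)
By \eqref{qua} and Lemma \ref{detnu}, the right-hand side of \eqref{uy} equals $\bigl(\tfrac{2^{-(n-1)}n}{q}\bigr)\mathfrak{m}(\psi)^{n-1}$, so it suffices to establish
\[
 \lambda_{L/K}(\psi_K)=\left(\tfrac{2^{-(n-1)}n}{q}\right)\mathfrak{m}(\psi)^{n-1}.
\]
I would proceed by induction on $s:=v_2(n)$, as the authors advertise.

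\textbf{Base case $s=0$ ($n$ odd).} Here $2^{-(n-1)}$ is a square in $k^\times$, so the target simplifies to $\bigl(\tfrac{n}{q}\bigr)\mathfrak{m}(\psi)^{n-1}$. I would derive this directly from the definition of $\lambda_{L/K}$ by passing to the Galois closure $L\cdot K(\mu_n)$, which is an \emph{unramified} extension of $L$ since $n$ is prime to $p$ and $\mu_n$ lives in the maximal unramified extension. Mackey's formula decomposes $R_{L/K}$ as a direct sum of $1$-dimensional tame characters of $W_K$, each with an explicit $\epsilon$-factor computable as a quadratic Gauss sum on $k$. Collecting the contributions, together with the value $\delta_{L/K}(\varpi)=\bigl(\tfrac{q}{n}\bigr)$ from Lemma \ref{deltame}, yields the base-case formula. (Alternatively, the identity can be extracted directly from the tame-case computations in \cite{BHestI, BHestII}.)

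\textbf{Inductive step ($s\geq 1$).} Write $n=2m$ and set $L'=K(\varphi_L^2)\subset L$. Since $L\simeq K[T]/(T^n-\varpi_L)$, the intermediate field $L'$ is totally tamely ramified over $K$ of degree $m$ and $L/L'$ is totally tamely ramified quadratic. The tower formula for the Langlands constant gives
\[
 \lambda_{L/K}(\psi_K)=\lambda_{L/L'}(\psi_{L'})\cdot \lambda_{L'/K}(\psi_K)^{[L:L']}
 =\lambda_{L/L'}(\psi_{L'})\cdot \lambda_{L'/K}(\psi_K)^{2}.
\]
By the inductive hypothesis, $\lambda_{L'/K}(\psi_K)=\bigl(\tfrac{2^{-(m-1)}m}{q}\bigr)\mathfrak{m}(\psi)^{m-1}$, and squaring collapses the Jacobi symbol, leaving $\mathfrak{m}(\psi)^{2(m-1)}=\bigl(\tfrac{-1}{k}\bigr)^{m-1}$ by \eqref{gr}. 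It remains to compute $\lambda_{L/L'}(\psi_{L'})$: since $L/L'$ is tame quadratic totally ramified, $R_{L/L'}=\mathbf{1}_{L'}\oplus \delta_{L/L'}$, so $\lambda_{L/L'}(\psi_{L'})=\epsilon(\delta_{L/L'},s,\psi_{L'})$ is an explicit quadratic Gauss sum. Because $L'/K$ is totally ramified, $k_{L'}=k$, and the calculation reduces to one over $k$, producing an expression of the form $\bigl(\tfrac{\ast}{q}\bigr)\mathfrak{m}(\psi)$ where ``$\ast$'' depends only on the class of the chosen uniformizer of $L'$ modulo squares.

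\textbf{Main obstacle.} The principal difficulty lies in bookkeeping the product of Jacobi symbols: one must verify the congruence
\[
 \det \nu_{n-1}\equiv \ast\cdot (\det\nu_{m-1})^{2}\pmod{(k^{\times})^{2}}
\]
and reconcile it with the residue-field Gauss-sum contribution of $\lambda_{L/L'}(\psi_{L'})$. In particular, the Jacobi symbols $\bigl(\tfrac{2}{q}\bigr)$ and $\bigl(\tfrac{m}{q}\bigr)$ appear and must be manipulated into the form $\bigl(\tfrac{2^{-(n-1)}n}{q}\bigr)$; this is precisely the place where the quadratic reciprocity law in $\mathbb{Z}$ is invoked, to flip symbols of the shape $\bigl(\tfrac{\cdot}{q}\bigr)$ against $\bigl(\tfrac{q}{\cdot}\bigr)$ and combine them. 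A secondary but genuine technical point is that $\psi_{L'}=\psi_K\circ\mathrm{Tr}_{L'/K}$ is not of level zero (the tame different of $L'/K$ equals $\mathfrak{p}_{L'}^{m-1}$), so the Gauss-sum $\mathfrak{m}(\psi_{L'})$ must be renormalized; however, since $k_{L'}=k$ the renormalization contributes a clean Jacobi-symbol factor that feeds into the reciprocity bookkeeping above.
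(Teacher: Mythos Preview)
Your overall induction scheme is sound and is a legitimate alternative to the paper's, but you have mislocated the key step and introduced an unnecessary worry.

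\textbf{Comparison with the paper.} The paper also inducts on $v_2(n)$, but with the quadratic step at the \emph{bottom}: it takes the unique quadratic subextension $K'\subset L$, writes $\lambda_{L/K}(\psi_K)=\lambda_{L/K'}(\psi_{K'})\,\lambda_{K'/K}(\psi_K)^{n/2}$, uses $\lambda_{K'/K}(\psi_K)=\mathfrak m(\psi_K)$ and $\mathfrak m(\psi_{K'})=\bigl(\tfrac{2}{q}\bigr)\mathfrak m(\psi_K)$, and applies the inductive hypothesis to $L/K'$. You instead put the quadratic step at the \emph{top}, applying the hypothesis to $L'/K$. Both are valid uses of the tower formula; your version is arguably tidier in the even case, as explained below.

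\textbf{The level worry is unfounded.} Since $L'/K$ is totally tamely ramified of degree $m$, the different is $\mathfrak p_{L'}^{m-1}$, so $\psi_{L'}=\psi_K\circ\Tr_{L'/K}$ is trivial on $\mathfrak p_{L'}$ and nontrivial on $\cO_{L'}$: it \emph{is} of level zero. Concretely, $\Tr_{L'/K}(x)\equiv m\bar x\pmod{\mathfrak p_K}$ for $x\in\cO_{L'}$, so $\psi_{L'}$ induces $\bar x\mapsto\psi(m\bar x)$ on $k_{L'}=k$, whence $\mathfrak m(\psi_{L'})=\bigl(\tfrac{m}{q}\bigr)\mathfrak m(\psi)$.

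\textbf{Your inductive step needs no reciprocity.} With this, $\lambda_{L/L'}(\psi_{L'})=\mathfrak m(\psi_{L'})=\bigl(\tfrac{m}{q}\bigr)\mathfrak m(\psi)$ by the same quadratic lemma the paper cites (\cite[Lemma 1.5(3)]{BHestII}), and
\[
 \lambda_{L/K}(\psi_K)=\Bigl(\tfrac{m}{q}\Bigr)\mathfrak m(\psi)\cdot\mathfrak m(\psi)^{2(m-1)}
 =\Bigl(\tfrac{m}{q}\Bigr)\mathfrak m(\psi)^{n-1}.
\]
Since $n=2m$ gives $2^{-(n-1)}n=2^{-(n-2)}m$ with $n-2$ even, $\bigl(\tfrac{2^{-(n-1)}n}{q}\bigr)=\bigl(\tfrac{m}{q}\bigr)$ and you are done. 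No symbol-flipping is required; the ``main obstacle'' you describe does not arise.

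\textbf{Where reciprocity actually enters.} It is the base case $n$ odd. Any direct computation (your Mackey sketch, or simply \cite[Theorem 2.1(1)]{BHestII} as the paper does) yields $\lambda_{L/K}(\psi_K)=\bigl(\tfrac{q}{n}\bigr)$, whereas the target is $\bigl(\tfrac{n}{q}\bigr)\mathfrak m(\psi)^{n-1}=\bigl(\tfrac{n}{q}\bigr)(-1)^{\frac{n-1}{2}\cdot\frac{q-1}{2}}$ by \eqref{gr}. Equating these \emph{is} the quadratic reciprocity law. So your plan is correct once you move the invocation of reciprocity from the inductive step (where it is not needed) to the odd base case (where it is essential), and drop the renormalization concern for $\psi_{L'}$.
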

\begin{proof}
By \eqref{qua}, it suffices to show 
\begin{equation}\label{uy2}
\lambda_{L/K}(\psi_K)=
\left( \frac{\det \nu_{n-1}}{q}\right)
\mathfrak{m}(\psi_K)^{n-1}.
\end{equation}
Using Lemma \ref{detnu} and \eqref{gr}, 
the equality \eqref{uy2}
is rewritten as follows:
\begin{equation}\label{son}
\lambda_{L/K}(\psi_K) =
\begin{cases}
 \bigl( \frac{n}{q} \bigr) 
 \mathfrak{m}(\psi_K)^{n-1}\quad  & \textrm{if $n$ is odd}, \\[0.2cm]
 \bigl( \frac{n/2}{q} \bigr) 
 \mathfrak{m}(\psi_K)^{n-1} \quad & \textrm{if $n$ is even}. 
\end{cases}
\end{equation}

First, we consider the case where $n$ is odd.
In this case, 
$\lambda_{L/K}(\psi_K)$ equals the Jacobi symbol 
$\left(\frac{q}{n}\right)$ by \cite[Theorem 2.1(1)]{BHestII}. 
Since $n-1$ is even, we have 
\[
 \mathfrak{m}(\psi_K)^{n-1}
 =\left(\frac{-1}{q}\right)^{\frac{n-1}{2}} 
 =(-1)^{\frac{n-1}{2}\frac{q-1}{2}} 
\] 
by \eqref{gr}. 
Hence, \eqref{son} for the odd case 
is equivalent to
\[
 \biggl(\frac{q}{n}\biggr)=
 \biggl(\frac{n}{q}\biggr)(-1)^{\frac{n-1}{2}\frac{q-1}{2}}.
\] 
This follows from the quadratic reciprocity law. 
Hence, if $n$ is odd, 
we have proved the equality \eqref{uy2}. 

We consider the case where $n$ is even.
Let $K'$ be the unique quadratic subextension 
of $L$ over $K$. 
Then, we have 
\[
 \lambda_{L/K}(\psi_K)=\lambda_{L/K'}(\psi_{K'})
 \lambda_{K'/K}(\psi_K)^{\frac{n}{2}}
\]
by \cite[(1.5.2)]{BHestII}. 
Furthermore, we have 
$\lambda_{K'/K}(\psi_K)=\mathfrak{m}(\psi_K)$ 
by \cite[Lemma 1.5(3)]{BHestII}.
Hence, we acquire 
\begin{equation}\label{ind}
 \lambda_{L/K}(\psi_K)=
 \lambda_{L/K'}(\psi_{K'})
 \mathfrak{m}(\psi_K)^{\frac{n}{2}}.
\end{equation}
On the other hand, we have 
\begin{equation}\label{m}
 \mathfrak{m}(\psi_{K'})=q^{-\frac{1}{2}}
 \sum_{x \in k^{\times}} \biggl( \frac{x}{k} \biggr) 
 \psi_K^0(2x)
 =\biggl( \frac{2}{q} \biggr) 
 \mathfrak{m}(\psi_K).
\end{equation}

Let $v_2(n)$ denote the $2$-adic valuation of $n$. 
We prove \eqref{uy2} by induction on $v_2(n)$.
First, 
we prove \eqref{uy2} when $v_2(n)=1$.
In this case, by \eqref{son} for the odd case, 
\eqref{ind} and \eqref{m}, we have
\[
 \lambda_{L/K}(\psi_K)=
 \left(\frac{n/2}{q}\right)\mathfrak{m}(\psi_{K'})^{\frac{n}{2} -1}
 \mathfrak{m}(\psi_K)^{\frac{n}{2}}=
 \left(\frac{n/2}{q}\right)\mathfrak{m}(\psi_K)^{n-1}.
\]
Hence, we obtain \eqref{uy2} in this case.

Now, we consider the case where $v_2(n)>1$. 
By the induction hypothesis 
and \eqref{m}, 
we have 
\[
 \lambda_{L/K'}(\psi_{K'})=
 \left(\frac{n/4}{q}\right)\mathfrak{m}(\psi_{K'})^{\frac{n}{2} -1}=
 \left(\frac{n/2}{q}\right)\mathfrak{m}(\psi_K)^{\frac{n}{2} -1}. 
\]
Hence, the claim follows from \eqref{ind}.
\end{proof}


\begin{thebibliography}{ABPS16}

\bibitem[ABPS16]{ABPSdep}
A.-M. Aubert, P.~Baum, R.~Plymen, and M.~Solleveld.
\newblock Depth and the local {L}anglands correspondence.
\newblock In {\em Arbeitstagung {B}onn 2013}, volume 319 of {\em Progr. Math.},
  pages 17--41. Birkh\"auser/Springer, Cham, 2016.

\bibitem[AL16]{ALssrGL}
M.~Adrian and B.~Liu.
\newblock Some results on simple supercuspidal representations of
  {$\mathrm{GL}_n(F)$}.
\newblock {\em J. Number Theory}, 160:117--147, 2016.

\bibitem[BF83]{BFGdiv}
C.~J. Bushnell and A.~Fr{\"o}hlich.
\newblock {\em Gauss sums and {$p$}-adic division algebras}, volume 987 of {\em
  Lecture Notes in Mathematics}.
\newblock Springer-Verlag, Berlin-New York, 1983.

\bibitem[BH05a]{BHestI}
C.~J. Bushnell and G.~Henniart.
\newblock The essentially tame local {L}anglands correspondence, {I}.
\newblock {\em J. Amer. Math. Soc.}, 18(3):685--710, 2005.

\bibitem[BH05b]{BHestII}
C.~J. Bushnell and G.~Henniart.
\newblock The essentially tame local {L}anglands correspondence, {II}:
  {T}otally ramified representations.
\newblock {\em Compos. Math.}, 141(4):979--1011, 2005.

\bibitem[BH06]{BHLLCGL2}
C.~J. Bushnell and G.~Henniart.
\newblock {\em The local {L}anglands conjecture for {$\rm GL(2)$}}, volume 335
  of {\em Grundlehren der Mathematischen Wissenschaften}.
\newblock Springer-Verlag, Berlin, 2006.

\bibitem[BH10]{BHestIII}
C.~J. Bushnell and G.~Henniart.
\newblock The essentially tame local {L}anglands correspondence, {III}: the
  general case.
\newblock {\em Proc. Lond. Math. Soc. (3)}, 101(2):497--553, 2010.

\bibitem[BH11]{BHestJL}
C.~J. Bushnell and G.~Henniart.
\newblock The essentially tame {J}acquet-{L}anglands correspondence for inner
  forms of {${\rm GL}(n)$}.
\newblock {\em Pure Appl. Math. Q.}, 7(3, Special Issue: In honor of Jacques
  Tits):469--538, 2011.

\bibitem[BH14a]{BHLepi}
C.~J. Bushnell and G.~Henniart.
\newblock Langlands parameters for epipelagic representations of {${\rm
  GL}_n$}.
\newblock {\em Math. Ann.}, 358(1-2):433--463, 2014.

\bibitem[BH14b]{BHeffL}
C.~J. Bushnell and G.~Henniart.
\newblock To an effective local {L}anglands correspondence.
\newblock {\em Mem. Amer. Math. Soc.}, 231(1087):v+88, 2014.

\bibitem[Boy99]{BoyMDr}
P.~Boyer.
\newblock Mauvaise r\'eduction des vari\'et\'es de {D}rinfeld et correspondance
  de {L}anglands locale.
\newblock {\em Invent. Math.}, 138(3):573--629, 1999.

\bibitem[BW16]{BWMax}
M.~Boyarchenko and J.~Weinstein.
\newblock Maximal varieties and the local {L}anglands correspondence for
  {$GL(n)$}.
\newblock {\em J. Amer. Math. Soc.}, 29(1):177--236, 2016.

\bibitem[Del77]{DelCoet}
P.~Deligne.
\newblock {\em Cohomologie \'etale}.
\newblock Lecture Notes in Mathematics, Vol. 569. Springer-Verlag, Berlin-New
  York, 1977.
\newblock S{\'e}minaire de G{\'e}om{\'e}trie Alg{\'e}brique du Bois-Marie SGA
  4${\frac{1}{2}}$, Avec la collaboration de J. F. Boutot, A. Grothendieck, L.
  Illusie et J. L. Verdier.

\bibitem[Del84]{Delp0}
P.~Deligne.
\newblock Les corps locaux de caract\'eristique {$p$}, limites de corps locaux
  de caract\'eristique {$0$}.
\newblock In {\em Representations of reductive groups over a local field},
  Travaux en Cours, pages 119--157. Hermann, Paris, 1984.

\bibitem[DL98]{DenLoCh}
J.~Denef and F.~Loeser.
\newblock Character sums associated to finite {C}oxeter groups.
\newblock {\em Trans. Amer. Math. Soc.}, 350(12):5047--5066, 1998.

\bibitem[Dri74]{DrEmod}
V.~G. Drinfel{\cprime}d.
\newblock Elliptic modules.
\newblock {\em Mat. Sb. (N.S.)}, 94(136):594--627, 656, 1974.

\bibitem[Hed10]{HedPhD}
S.~M.~H. Hedayatzadeh.
\newblock {\em Exterior powers of Barsotti-Tate groups}.
\newblock PhD thesis, ETH Z\"{u}rich, 2010.

\bibitem[HL11]{HLCinon}
G.~Henniart and B.~Lemaire.
\newblock Changement de base et induction automorphe pour {${\rm GL}_n$} en
  caract\'eristique non nulle.
\newblock {\em M\'em. Soc. Math. Fr. (N.S.)}, (124):vi+190, 2011.

\bibitem[HT01]{HTsimSh}
M.~Harris and R.~Taylor.
\newblock {\em The geometry and cohomology of some simple {S}himura varieties},
  volume 151 of {\em Annals of Mathematics Studies}.
\newblock Princeton University Press, Princeton, NJ, 2001.
\newblock With an appendix by Vladimir G. Berkovich.

\bibitem[Hub93]{HuCv}
R.~Huber.
\newblock Continuous valuations.
\newblock {\em Math. Z.}, 212(3):455--477, 1993.

\bibitem[Ill06]{IllMtr}
L.~Illusie.
\newblock Miscellany on traces in {$\ell$}-adic cohomology: a survey.
\newblock {\em Jpn. J. Math.}, 1(1):107--136, 2006.

\bibitem[IT12]{ITreal3}
N.~Imai and T.~Tsushima.
\newblock Geometric realization of the local {L}anglands correspondence for
  representations of conductor three, 2012.
\newblock arXiv:1205.0734.

\bibitem[IT15]{ITlgsw1}
N.~Imai and T.~Tsushima.
\newblock Local {G}alois representations of {S}wan conductor one, 2015.
\newblock arXiv:1509.02960.

\bibitem[IT16]{ITsimpwild}
N.~Imai and T.~Tsushima.
\newblock Affinoids in the {L}ubin-{T}ate perfectoid space and simple
  supercuspidal representations {II}: wild case, 2016.
\newblock arXiv:1603.04693.

\bibitem[IT17]{ITstab3}
N.~Imai and T.~Tsushima.
\newblock Stable models of {L}ubin-{T}ate curves with level three.
\newblock {\em Nagoya Math. J.}, 225:100--151, 2017.

\bibitem[IT18]{ITsimpJL}
N.~Imai and T.~Tsushima.
\newblock Local {J}acquet--{L}anglands correspondences for simple supercuspidal
  representations.
\newblock {\em Kyoto J. Math.}, 58(3):623--638, 2018.

\bibitem[KM85]{KMmod}
N.~M. Katz and B.~Mazur.
\newblock {\em Arithmetic moduli of elliptic curves}, volume 108 of {\em Annals
  of Mathematics Studies}.
\newblock Princeton University Press, Princeton, NJ, 1985.

\bibitem[Lan94]{LangANT}
S.~Lang.
\newblock {\em Algebraic number theory}, volume 110 of {\em Graduate Texts in
  Mathematics}.
\newblock Springer-Verlag, New York, second edition, 1994.

\bibitem[RY14]{RYEinv}
M.~Reeder and J.-K. Yu.
\newblock Epipelagic representations and invariant theory.
\newblock {\em J. Amer. Math. Soc.}, 27(2):437--477, 2014.

\bibitem[Sch12]{SchPerf}
P.~Scholze.
\newblock Perfectoid spaces.
\newblock {\em Publ. Math. Inst. Hautes \'Etudes Sci.}, 116:245--313, 2012.

\bibitem[SGA4-3]{SGA4-3}
{\em Th\'eorie des topos et cohomologie \'etale des sch\'emas. {T}ome 3}.
\newblock Lecture Notes in Mathematics, Vol. 305. Springer-Verlag, Berlin-New
  York, 1973.
\newblock S{\'e}minaire de G{\'e}om{\'e}trie Alg{\'e}brique du Bois-Marie
  1963--1964 (SGA 4), Dirig{\'e} par M. Artin, A. Grothendieck et J. L.
  Verdier. Avec la collaboration de P. Deligne et B. Saint-Donat.

\bibitem[SGA5]{SGA5}
{\em Cohomologie {$l$}-adique et fonctions {$L$}}.
\newblock Lecture Notes in Mathematics, Vol. 589. Springer-Verlag, Berlin-New
  York, 1977.
\newblock S{\'e}minaire de G{\'e}ometrie Alg{\'e}brique du Bois-Marie
  1965--1966 (SGA 5), Edit{\'e} par Luc Illusie.

\bibitem[SW13]{ScWeMpd}
P.~Scholze and J.~Weinstein.
\newblock Moduli of {$p$}-divisible groups.
\newblock {\em Camb. J. Math.}, 1(2):145--237, 2013.

\bibitem[Tok16]{TokLTs}
K.~Tokimoto.
\newblock Affinoids in the {L}ubin-{T}ate perfectoid space and special cases of
  the local {L}anglands correspondence in positive characteristic, 2016.
\newblock arXiv:1609.02524.

\bibitem[Wei10]{WeGood}
J.~Weinstein.
\newblock Good reduction of affinoids on the {L}ubin-{T}ate tower.
\newblock {\em Doc. Math.}, 15:981--1007, 2010.

\bibitem[Wei16]{WeSemi}
J.~Weinstein.
\newblock Semistable models for modular curves of arbitrary level.
\newblock {\em Invent. Math.}, 205(2):459--526, 2016.

\bibitem[Yos10]{YoLTv}
T.~Yoshida.
\newblock On non-abelian {L}ubin-{T}ate theory via vanishing cycles.
\newblock In {\em Algebraic and arithmetic structures of moduli spaces
  ({S}apporo 2007)}, volume~58 of {\em Adv. Stud. Pure Math.}, pages 361--402.
  Math. Soc. Japan, Tokyo, 2010.

\end{thebibliography}

\noindent
Naoki Imai\\ 
Graduate School of Mathematical Sciences, 
The University of Tokyo, 3-8-1 Komaba, Meguro-ku, 
Tokyo, 153-8914, Japan\\ 
naoki@ms.u-tokyo.ac.jp \vspace*{10pt}

\noindent
Takahiro Tsushima\\ 
Department of Mathematics and Informatics, 
Faculty of Science, Chiba University, 
1-33 Yayoi-cho, Inage, Chiba, 263-8522, Japan\\
tsushima@math.s.chiba-u.ac.jp

\end{document}